\documentclass[11pt]{article}
\usepackage{amsmath,amsthm,amssymb,mathrsfs}
\usepackage{enumerate}
\usepackage{wrapfig}
\usepackage{graphicx}
\usepackage{bbm}
\usepackage[top=3.5cm,bottom=4.5cm,left=3.0cm,right=3.0cm]{geometry}
\usepackage{color}

\usepackage{url}
\usepackage{xcolor}
\usepackage{tikz}
\usepackage{float}
\usepackage{adjustbox}
\usepackage{amsmath}
\usetikzlibrary{arrows.meta,calc,positioning,decorations.pathreplacing}
\usepackage{comment}
\usepackage[normalem]{ulem}
\newtheorem{dfn}{Definition}[section]
\newtheorem{thm}[dfn]{Theorem}
\newtheorem{theorem}[dfn]{Theorem}

\newtheorem{lemma}[dfn]{Lemma}

\newtheorem{cor}[dfn]{Corollary}

\newtheorem{rem}[dfn]{Remark}

\newtheorem{proposition}[dfn]{Proposition}

\newtheorem{open}{Open Problem}

\makeatletter

        \@addtoreset{equation}{section}
\makeatother

\newcommand{\E}{\mathbb E}
\newcommand{\Pp}{\mathbb P}

\newcommand{\1}{\mathbf 1}
\newcommand{\Var}{\operatorname{Var}}
\newcommand{\LRP}{\mathrm{LRP}}
\newcommand{\RW}{\mathrm{RW}}

\title{Graph distance and effective resistance of the random walk trace in four and five dimensions}
\author{
Arka Adhikari\thanks{Department of Mathematics, University of Maryland,
College Park, MD, USA. Email: \texttt{arkaa@umd.edu}}
\and
Izumi Okada\thanks{Department of Mathematics, University of Tokyo,
3-8-1 Komaba, Meguro-ku, Tokyo 153-0041, Japan.
Email: \texttt{iokada@ms.u-tokyo.ac.jp}}
\and
Daisuke Shiraishi\thanks{Graduate School of Informatics, Kyoto University,
Yoshida Honmachi, Sakyo-ku, Kyoto 606-8501, Japan. Email: \texttt{shiraishi@acs.i.kyoto-u.ac.jp}}
}

\date{}

\begin{document}

\maketitle
\begin{abstract}
In this paper, we prove that the fluctuations of the graph distance and the effective resistance on the trace of a random walk in four and five dimensions converge in distribution to a stable law. 
In \cite{AdhikariOkada2026}, the first and second authors proved that the corresponding fluctuations converge to a Gaussian distribution in dimensions six and higher. Taken together, these results reveal a phase transition between dimensions five and six. Our proof develops a novel coupling with long range percolation, and we expect this technique to find applications in a broad class of related models. 
\end{abstract}

\section{Introduction}
\label{sec:introduction}

A random walk path may be viewed as a sparse connected medium whose intrinsic geometry is created by self-intersections. Although the walk traverses $n$ steps, returns between widely separated portions of the trajectory produce shortcuts that can substantially reduce both the distance and the effective resistance between its endpoints. This viewpoint was introduced in the physics literature by Banavar, Brooks Harris, and Koplik \cite{BanavarBrooksHarrisKoplik1983}, who proposed the random walk range as a model of a percolative medium of low porosity. Related shortest-path observables have also appeared in models of the mechanical response of polymeric materials \cite{Zhangetal2024}.

%These physical properties are mathematically characterized by the graph distance and the effective resistance of the random walk range. Let $G=(V,E)$ denote the subgraph of $\mathbb{Z}^d$ coming from the points $\{S_0,\ldots,S_n\}$ and the edges traversed by the random walk. 
%The graph distance $D_n$ is expressed as the distance between the points $S_0$ and $S_n$ on the graph $G$. We define $R_n$ as the effective resistance between $S_0$ and $S_n$ in the electrical network obtained by assigning unit resistance to every edge of $G_n$; equivalently, $R_n$ is the voltage difference required to send one unit of current from $S_0$ to $S_n$. 

These geometric and transport properties are mathematically characterized by the graph distance and the effective resistance of the random walk range. Let $G_n=(V_n,E_n)$ denote the subgraph of $\mathbb{Z}^d$ whose vertex set consists of the points ${S_0,\ldots,S_n}$ and whose edge set consists of the edges traversed by the random walk. The graph distance $D_n$ is defined as the graph distance between $S_0$ and $S_n$ in $G_n$. We define $R_n$ as the effective resistance between $S_0$ and $S_n$ in the electrical network obtained by assigning unit resistance to every edge of $G_n$; equivalently, $R_n$ is the voltage difference required to send one unit of current from $S_0$ to $S_n$.

In the mathematical literature, these quantities were first studied by Burdzy and Lawler
\cite{BurdzyLawler1990}, who related the graph distance exponent, the
resistance exponent, the loop-erasing exponent, the intersection exponent, and the exponent for a random walk moving on a random walk path.  
They conjectured that there is a key critical transition in dimension $d=4$. Namely, in $d=4$,  logarithmic corrections start to govern the leading order behavior of the graph distance and the effective resistance. Later mathematical work rigorously established many parts of this conjecture.  In particular, Croydon
\cite{Croydon2009} proved such linear growth estimates for the graph distance and
effective resistance  in dimensions \(d\geq5\), as part
of his study of a random walk on the range of a random walk. 
In dimension four, the correct 
first-order scale for \(D_n\) and \(R_n\) is \(n(\log n)^{-1/2}\), as proved in
\cite{ShiraishiWatanabe2026}.  
This is in contrast with loop-erased random walk, for which substantial
progress has been made even in dimensions two and three; see, for example,
\cite{HernandezTorresLiShiraishi2025, Kenyon2000,Kozma2007,LiShiraishi2025,LawlerSchrammWerner2004,LawlerViklund2021,Schramm2000,Shiraishi2018}.

We remark here that the origin of the critical behavior for the graph distance and the effective resistance is fundamentally different from that of other natural quantities of the random walk range, such as the volume or the capacity \cite{AsselahSchapiraSousi2019}. For example, the leading order behavior of the capacity of the random walk range can be determined by scaling to the limiting Brownian motion and understanding the capacity of the Brownian motion instead. However, for the graph distance, one would expect to see many loops of size $(\log n)^{1/2}$, which would in turn cause approximately $n/(\log n)^{1/2}$ cut times and a resulting graph distance of order $n/(\log n)^{1/2}$. These mesoscopic corrections are not visible under the standard Brownian scaling,  and thus the graph distance for the random walk range cannot be directly recovered from a Brownian scaling limit.

This effect is even more pronounced when considering the fluctuations of the graph distance for the random walk range. Here, the capacity of the random walk range in dimension $d \ge 4$ can be related to the Brownian version of the capacity \cite{AsselahSchapiraSousi2019}. By contrast, the fluctuations of the graph distance are even more strongly determined by the heavy-tailed reductions to the graph distance occurring due to the presence of long range loops. For example, the work \cite{AdhikariOkada2026} showed that, even in dimension 6, one needs to perform a non-trivial scaling to obtain a central limit theorem, while the fluctuation of the graph distance is not of the same order as the square root of the variance in dimension $d=4$ or $d=5$; however, \cite{AdhikariOkada2026} was unable to identify the limiting law of the fluctuations in dimension $d=4$ or $5$.

Our first main theorem identifies the fluctuation limit of $D_n$ and $R_n$ in $d=4,5$ and, to the best of our knowledge, is the first such theorem that produces a stable law limit to a natural global functional related to the random walk range.  

\begin{thm}
\label{thm:intro-stable}
Let \(X_n\) denote either \(D_n\) or \(R_n\).

If \(d=5\), then
\[
    \frac{X_n-\mathbb{E} X_n}{n^{2/3}}
    \ \Longrightarrow\
    Z_{5,X},
\]
where \(Z_{5,X}\) is a non-degenerate stable random variable of index \(3/2\)
with skewness \(-1\). The law of \(Z_{5,X}\) depends on the choice
\(X=D\) or \(X=R\).

If \(d=4\), let \(b_X\) be the constant in the first-order asymptotic
\eqref{Eq2} corresponding to \(X\). Then
\[
    \frac{
        X_n-\mathbb{E} X_n
        -\frac{b_X}{2} n(\log n)^{-3/2}\log\log n
    }{
        n(\log n)^{-3/2}
    }
    \ \Longrightarrow\
    Z_{4,X},
\]
where \(Z_{4,X}\) is a non-degenerate stable random variable of index \(1\)
with skewness \(-1\).  The law of \(Z_{4,X}\) depends on the choice
\(X=D\) or \(X=R\).

%{\bf DS: Upgrade this theorem to a process-level result?}
\end{thm}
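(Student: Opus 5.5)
The approach is to reduce the graph distance (and resistance) of the trace to a functional of a long range percolation (LRP) type structure on the time axis $\{0,\dots,n\}$. In this structure, a long loop between times $s<t$ with $|S_s-S_t|\le 1$ acts as a shortcut edge. We then show that the fluctuations come from a heavy-tailed sum of essentially independent "shortcut savings."

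\textbf{Step 1: heavy-tailed savings.} In $d\ge 5$, the probability that the walk returns at a time gap of order $m$ is of order $m^{-d/2}$. A shortcut spanning time gap $m$ saves roughly $m$ from the distance, or a comparable amount for resistance. Ignoring multiplicity, the number of shortcuts with gap $\ge m$ is therefore of order $n\,m^{1-d/2}$. Hence the saving $Y$ attached to a typical time has tail $\Pp(Y>m)\asymp m^{1-d/2}$, which is $m^{-3/2}$ for $d=5$. This is a stable index $3/2$ domain with one-sided negative contributions, so skewness $-1$, and $n$ such variables give the scale $n^{2/3}$.

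In $d=4$ the tail is $m^{-1}$. However, the effective saving of a loop must be renormalized by the local cut-time density $(\log m)^{-1/2}$ from \cite{ShiraishiWatanabe2026}. The saved distance over a gap of length $m$ is $b_X m(\log m)^{-1/2}$ rather than $m$. Summing these index-$1$ contributions produces the scale $n(\log n)^{-3/2}$ and the centering drift $\tfrac{b_X}{2}n(\log n)^{-3/2}\log\log n$. This drift comes from $\int^n m(\log m)^{-1/2}\,m^{-2}(\log m)^{-1}\cdots\,dm$ truncation, where the mismatch between $\log m$ and $\log n$ yields the $\log\log n$ term.

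\textbf{Step 2: block decomposition and coupling.} Partition $[0,n]$ into mesoscopic blocks of length $\ell=n^{\alpha}$ with small $\alpha$. Write $X_n=\sum_j X^{(j)}-\sum_k \Delta_k+\text{error}$, where $X^{(j)}$ is the distance within block $j$ (i.i.d. by the Markov property, after handling the small overlap). The $\Delta_k$ are savings due to macroscopic shortcuts, meaning returns between times differing by more than $\ell$. The sum of the block terms has variance of order $n\,\ell^{\cdot}$. Using the variance bounds of \cite{AdhikariOkada2026} inside blocks, this part is $o(n^{2/3})$, respectively $o(n(\log n)^{-3/2})$.

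Next, couple the macroscopic shortcut set with an LRP on $\{0,\dots,n/\ell\}$. In it, edges $\{s,t\}$ appear approximately independently with probability $\asymp |t-s|^{-d/2}$; for $d=4$ there is an extra $(\log)^{-1}$ from non-intersection. The key lemma is that distinct long shortcuts are rarely nested or chained at the relevant scale. Because the exponent $d/2>2$ in $d\ge5$ (and $=2$ with log gain in $d=4$), the expected number of pairs of overlapping shortcuts of size $\ge \epsilon n^{2/3}$ tends to $0$. The total saving is then the sum of individual jumps, a Poisson point process with intensity $n\,c\,m^{-d/2}\,dm$ on sizes after scaling. Its compensated sum converges to the stable law. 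The constant $c$ involves the local geometry at both ends (resistance versus distance), which explains why the law depends on $X$.

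\textbf{Main obstacle.} The hardest step is the coupling in Step 2. One must show that long loops appear nearly as an independent Poisson system despite the walk's dependence, and that the savings are additive, with an error controlled in $L^1$ at the fluctuation scale. This is delicate for small and medium loops, which contribute to the compensator. I would first try a second-moment computation. Using Green's function estimates for pairs of returns, I would bound the correlation between indicator events of returns at gaps $m_1,m_2$ by $m_1^{-d/2}m_2^{-d/2}$ times a factor decaying when the intervals are disjoint and well separated. I would then truncate at level $\epsilon$ times the scale, treat the small-jump part through its variance, which is $\asymp \epsilon^{2-(d/2-1)}$ times the scale squared and so vanishes as $\epsilon\to0$, and finally apply the standard triangular-array convergence criterion for infinitely divisible limits. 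In $d=4$, the additional precision needed for the $\log\log n$ centering requires the sharp first-order asymptotic \eqref{Eq2} uniformly over block scales.
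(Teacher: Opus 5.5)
Your heuristics for the tail exponents, the index and skewness, and the truncation at $\epsilon$ times the scale (with small-jump variance $\epsilon^{2-(d/2-1)}$) agree with the paper. There are, however, genuine gaps in the two places where the real work lies.

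\emph{Coarse-graining and additivity.} You take blocks of length $\ell=n^{\alpha}$. You then treat shortcuts with gaps between $\ell$ and $\epsilon$ times the critical scale as a small-jump part, controlled by a second moment of individual savings. At those scales that object is not usable:
in $d=4$, for any block length $\ll n/\log n$, inter-block intersections occur with probability tending to one (cf.\ \eqref{Eq1});
overlapping intersecting pairs are abundant (of order $(n/\ell)(\log n)^{-2}$ of them);
a single macroscopic intersection can light up several fine-scale block pairs;
and the savings of nested or chained shortcuts do not add.
So neither your variance computation nor an independent-edge LRP on $\{0,\dots,n/\ell\}$ is available.

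The paper instead puts the blocks themselves at $\varepsilon n/\log n$ (resp.\ $\varepsilon n^{2/3}$). All sub-critical savings are then hidden inside i.i.d.\ block costs. Their normalized second moment is $O(\varepsilon)$ (resp.\ $O(\varepsilon^{1/2})$) by exactly the variance bound you already invoke. What remains is a graph on $O(\log n)$ micro-blocks with edge probabilities $\asymp r^{-2}/\log n$.

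Even there, two ingredients you do not supply are essential.
First, approximate independence comes from the frozen-path estimate (Lemma~\ref{lem:freezing-four}) fed into Chen--Stein at a fixed length cutoff. It does not come from Green's-function correlations of returns, which in $d=4$ are off by logarithmic factors, because an intersection of two blocks typically comes with order $\log n$ intersecting pairs.
Second, additivity comes from \emph{global cut times} within $2n(\log n)^{-6}$ of every edge endpoint and next to every uncrossed macro endpoint, plus local cut times of the macro blocks. These split $X_n$ exactly, and they are also what makes $R$ add in series. One then still needs concentration of the bypassed cost $D(j,k)$ around $(k-j-1)\E[\Delta_n]$, conditionally on the edge.
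Incidentally, the edge intensities are the same for $D$ and $R$. The law depends on $X$ only through $b_X$ (resp.\ $\xi_X$); endpoint costs are $O(\E[\Delta_n])$ per edge and negligible.

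\emph{Centering.} In $d=4$, consider the shortcut loss $S_n^\varepsilon=\sum_i\hat X_i-X_n$ normalized by $n(\log n)^{-3/2}$. At fixed $\varepsilon$ it converges to $bX_{(\varepsilon)}$, whose L\'evy measure $dx/(2x^2)$ on $(\varepsilon,\infty)$ has infinite first moment. So $\E[S_n^\varepsilon]$ cannot be read off from the limit and must be computed directly to precision $o(n(\log n)^{-3/2})$.

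The first-order asymptotic \eqref{Eq2} has relative error $(\log n)^{-\theta}$ for some unspecified $\theta>0$. It therefore determines $\E[S_n^\varepsilon]=N_\varepsilon\E[\hat X_1]-\E[X_n]$ only up to $O(n(\log n)^{-1/2-\theta})$. That error swamps $n(\log n)^{-3/2}\log\log n$ unless $\theta>1$, which is not available.

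The paper instead sums the second-order dyadic loss \eqref{Eq8} over the $\log_2(\varepsilon^{-1}\log n)$ dyadic scales between $\varepsilon n/\log n$ and $n$. The $\log\log n$ is the number of those scales, i.e.\ the logarithmic divergence of the index-one compensator; it is not a mismatch between $\log m$ and $\log n$.

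In $d=5$, centering at $\E X_n$ requires $\E[S_n^\varepsilon]/n^{2/3}\to\E[V_\varepsilon]$. The paper gets this from uniform integrability, via $p$-th moment bounds ($1<p<3/2$) on the cross terms $\Delta_X(a,b,c)$. Your plan does not address this step.
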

\begin{rem}
The proof in dimension $d=5$ follows all of the same steps as the proof of the fluctuation estimate in $d=4$, with minor differences occurring due to the dimension-dependent changes in the probability of long range intersections. As the four-dimensional case is more delicate, we will present the full details in $d=4$.
\end{rem}

We begin with a few brief comments. 
The negative skewness has a natural interpretation.  Long range intersections
create shortcuts, and shortcuts decrease graph distance and effective
resistance.  Hence the rare large contributions to the centered fluctuations
come mainly from the negative side.  The theorem shows in particular that the fluctuations are genuinely non-Gaussian in both dimensions four and five, governed by
rare long range intersections rather than by a Gaussian central-limit
mechanism.  The dimension-four case is even more singular: the limiting law has index one,
and the long range connection probabilities have an \(r^{-2}\)-type tail in
the edge length.  This leads to a logarithmic divergence in the mean shortcut
contribution, which is reflected in the additional centering term of order
\(n(\log n)^{-3/2}\log\log n\).

While the paper \cite{AdhikariOkada2026} identifies the tail in the long range correlation as a source of the difference between the fluctuation and the variance in $d=4,5$, the work did not derive the expression for the fluctuation since it only obtained estimates of the correlation structure between the different long range connections that occur in the random walk range up to multiplicative errors of size $n^{o(1)}$. Indeed, one must observe that if there is a long range connection, this long range connection forces distant time parts of the random walk closer together and, thus, would force more possible long range connections nearby. To appropriately compute the fluctuation of the graph distance or the effective resistance, one would first need to determine the correct order of long range loops that are expected to contribute most to the fluctuation. Once this is done, then one needs to precisely compute the correlations between loops on this scale.  Without the exact constant and the exact order of fluctuation, one cannot obtain the law of the fluctuation.

In this paper, we obtain these precise estimates on the correlation structure of long range intersections by first performing an appropriate coarse-graining procedure on the random walk range and then deriving a novel coupling between this coarse-grained intersection graph and a long range percolation graph. On this long range percolation graph, the probabilities of different long range edges are independent of each other and, thus, one can start to explicitly compute the  correlation structure of multiple different long range edges simultaneously.

The choice of the coarse-graining scale is dictated by a threshold phenomenon
for long range self-intersections.  Suppose that the time interval \([0,n]\) is
divided into subintervals of a common length \(a_n\), and ask whether there is
an intersection between two non-adjacent time blocks.  In dimension four, the random walk $S$ has loops only on logarithmic scales: if the block length $a_{n}$ is much
larger than \(n/\log n\), then such long range intersections are too rare,
whereas if the block length is much smaller than \(n/\log n\), then they appear
with probability tending to one (see \eqref{Eq1} for this).   The scale \(n/\log n\) is therefore the
borderline scale at which the coarse-grained self-intersection structure is
non-trivial.  
In dimensions \(d=5\), the analogous borderline scale is
polynomial, namely \(n^{2/3}\). %, see \eqref{eq1}.  
Thus the natural block length is
\[
    a_n =
    \begin{cases}
    \varepsilon n(\log n)^{-1}, & d=4,\\
    \varepsilon n^{2/3}, & d=5,
    \end{cases}
\]
where \(\varepsilon>0\) is fixed.  At this scale, the long range intersections
between distinct blocks are sparse, but still visible, and this is precisely
the regime in which a long range percolation approximation becomes meaningful.

We now describe the result informally.  Fix a small constant
\(\varepsilon>0\) and consider the simple random walk $S$ on \(\mathbb Z^d\) up to
time \(n\).  Divide the time interval into mesoscopic blocks
of length \(a_n\) as above.  From the walk we construct a graph, denoted by
\(G_1\), whose vertices are the time blocks.  Two vertices of \(G_1\) are
joined if the corresponding pieces of the random walk path intersect.
Consecutive blocks are always connected.  We also define a second graph,
denoted by \(G_2\), on the same vertex set.  This graph is a long range
percolation graph: nearest-neighbor edges are present deterministically, while
all other edges are present independently, with connection probabilities chosen
to match the asymptotic intersection probabilities of the corresponding random
walk blocks.  For the four-dimensional case treated below, the precise definitions
of \(G_1\) and \(G_2\) are given in \eqref{eq:def-G1-four} and
\eqref{eq:def-G2-four}, respectively; the connection probabilities in the
definition of \(G_2\) are specified in \eqref{eq:def-p-four}.

Our second main result is a strong coupling theorem for these two graphs.

\begin{thm}
\label{thm:intro-coupling}
Let  \(d=4,5\) and fix \(\varepsilon\in(0,1)\).  Let \(G_1\) be the
coarse-grained self-intersection graph of the simple random walk path $S[0,n]$ constructed
from the block length
\[
    a_n =
    \begin{cases}
    \varepsilon n(\log n)^{-1}, & d=4,\\
    \varepsilon n^{2/3}, & d=5,
    \end{cases}
\]
and let \(G_2\) be the corresponding long range percolation graph, with
nearest-neighbor edges present deterministically and with the dimension-dependent
connection probabilities specified below.  Then \(G_1\) and \(G_2\) can be
constructed on the same probability space so that
\[
    \lim_{n\to\infty}\mathbb{P}\bigl(G_1\neq G_2\bigr)=0.
\]
\end{thm}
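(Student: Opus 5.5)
The number of blocks is $N=n/a_n$, which equals $\varepsilon^{-1}\log n$ in $d=4$ and $\varepsilon^{-1}n^{1/3}$ in $d=5$. Both graphs live on the vertex set $\{1,\dots,N\}$ and contain every nearest-neighbour edge, so the coupling only has to match the non-adjacent pairs.

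The plan is a total-variation argument. The graph $G_1$ is a random subset of the non-adjacent pairs, and $G_2$ is the product measure with marginals $p_{ij}$. It therefore suffices to show
\[
d_{TV}\bigl(\mathcal L(G_1),\mathcal L(G_2)\bigr)\to0 ,
\]
because the maximal coupling then gives $\mathbb P(G_1\neq G_2)\to0$.

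\textbf{Step 1: count the long edges.} Let $I_{ij}$ be the event that blocks $i$ and $j$ intersect, for $|i-j|\ge2$. Set $r=|i-j|a_n$. The standard estimate \eqref{Eq1} gives
\[
\mathbb P(I_{ij})\asymp a_n^2 r^{-d/2},
\]
with $\log$-corrections in $d=4$; indeed in $d=4$ this is of order $|i-j|^{-2}/\log n$ up to $\varepsilon$-dependent constants. The connection probabilities $p_{ij}$ in \eqref{eq:def-p-four} are chosen so that $\mathbb P(I_{ij})=p_{ij}(1+o(1))$ uniformly in $i,j$. Summing over pairs shows $\sum_{i,j}p_{ij}=O(1)$. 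So only a tight number of long edges are present, and each is individually rare.

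\textbf{Step 2: reduce to a Poisson-type comparison.} Because each $p_{ij}$ is small and their sum is bounded, the law of $G_2$ is close to the law of a set of edges in which every fixed finite configuration $\{e_1,\dots,e_k\}$ has probability about
\[
\prod_{m} p_{e_m}\,\exp\Bigl(-\sum p\Bigr).
\]
I would use the Chen--Stein method, or equivalently factorial-moment/inclusion–exclusion bounds. It then suffices to prove two things for $G_1$:

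\begin{enumerate}[(a)]
\item a \emph{factorization} estimate: for each fixed $k$ and each $k$-tuple of distinct non-adjacent pairs in ``generic position'' (the index sets pairwise separated by $\gg1$ blocks),
\[
\mathbb P(I_{e_1}\cap\dots\cap I_{e_k})=(1+o(1))\prod_m p_{e_m};
\]
\item a \emph{negligibility} estimate: the total probability of non-generic configurations tends to zero. Such configurations include two intersection edges sharing an endpoint block or having endpoints within $O(1)$ blocks of each other, and edges between blocks at distance $2$.
\end{enumerate}

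With (a) and (b), the Chen--Stein bound for the dependent Bernoulli family yields convergence of $G_1$ to the same limiting point process as $G_2$. Note that both processes live on a growing index set, so "convergence" must be understood as the total-variation distance between the two going to zero; this follows from the uniform version of (a) and (b) together with tightness of the number of edges.

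\textbf{Step 3: prove the factorization (the main obstacle).} This is where the correlation issue from the introduction appears: one long intersection pins two distant pieces of the walk together, which could raise the chance of nearby intersections. I would condition on the endpoints $S_{ia_n}$ of the blocks. For separated pairs, the events $I_{e_m}$ depend on disjoint collections of blocks. I would then use the Markov property together with a last-exit/first-passage decomposition to write the joint probability as an integral over the positions of the intersection points.

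The key inputs are the following.
\begin{itemize}
\item Under conditioning on an intersection, the walk's displacement over the intervening blocks is still diffusive up to a negligible distortion. The intersection costs only a local $O(a_n)$ relocation, and the event has probability $a_n^2r^{-d/2}$ small relative to the diffusive Gaussian density.
\item In $d=4$, the logarithmic correction in the intersection probability of two independent walk pieces (the Lawler-type asymptotic non-intersection estimate) is stable under this conditioning.
\end{itemize}
The delicate part is the error term, which must be uniform over the $\sim N^{2k}$ tuples. For (b), adjacent-in-index edges require a bound on the probability of two intersections among three nearby blocks. Here I would use the triple-intersection/second-moment bound for the Green function in $d\ge4$, which gives an extra factor vanishing as $n\to\infty$; summing over positions then gives $o(1)$.

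In $d=5$ the same scheme works with polynomial estimates $p_{ij}\asymp n^{-1/3}\cdots$, which give cleaner error terms.
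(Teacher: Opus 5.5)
Your overall architecture (a process-level Chen--Stein bound for total variation, followed by the maximal coupling) matches the paper's. However, the dependency structure at its core is misidentified, and this breaks both (a) and (b).

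The event $E_{(i,j)}=\{S(I_i)\cap S(I_j)\neq\emptyset\}$ is a function of the increments of $S$ on $[t_{i-1},t_j]$ only. So edges with \emph{disjoint spans} are exactly independent. There is nothing to prove for them, and no conditioning on block endpoints or last-exit decomposition is needed. This is what the paper uses: in the Arratia--Goldstein--Gordon bound, the neighbourhood of an edge is the set of edges whose spans overlap it, so $b_3=0$ identically.

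Your ``generic position'' is instead defined by separation of the endpoint blocks. It therefore contains nested and crossing pairs such as $(i,j),(k,l)$ with $i<k<l<j$, which are not independent, and for these (a) is false. On $E_{(k,l)}$ the displacement across $[t_{k-1},t_l]$ collapses to the one-block scale. The effective separation of blocks $i$ and $j$ then drops from $j-i$ to about $j-i-(l-k)$. When $l-k\asymp j-i$, this changes $\mathbb P(E_{(i,j)}\mid E_{(k,l)})$ by a factor bounded away from $1$. So Step 3, which you flag as the main obstacle, targets the wrong configurations. Every pair with overlapping spans must go into (b), not just pairs with nearby endpoints.

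The real task is then to show $\sum\mathbb P(E_e\cap E_f)\to0$ over distinct $e,f$ with overlapping spans, and in $d=4$ your proposed tool cannot do this. A Green-function or second-moment bound controls the expected number of intersection pairs. For two blocks at index distance $\ell$ that number is of order $\ell^{-2}$, whereas $\mathbb P(E_e)\asymp\ell^{-2}/\log n$. First moments thus lose exactly the $1/\log n$: they give $\mathbb P(E_e\cap E_f)$ of order $\ell(e)^{-2}\ell(f)^{-2}$ at best, and summing over the $\asymp\log n$ placements diverges. What you need is that a second intersection costs an extra factor $1/\log n$ given the first, which is a quenched statement.

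The paper obtains this from Lawler's freezing lemma (Lemma~\ref{lem:freezing-four}). After discarding intersections near block endpoints (Lemma~\ref{lem:prim}) and imposing a separation event, a fresh piece of walk hits the frozen good past with probability at most $C(\log\log n)^C/\log n$. This gives $\mathbb P(E_e\cap E_f)\le C(\log\log n)^C(\log n)^{-2}+C(\log n)^{-p}$. The paper also truncates edge lengths at a fixed $R$, with tail $\le C_\vartheta/R$ by \eqref{eq:weak-long-edge-tail}. Then only $O_R(\log n)$ overlapping pairs need this bound, and it lets $n\to\infty$ before $R\to\infty$. Without truncation you would need a conditional bound that also decays in $\ell(f)$.

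Also, single edges of length $2$ cannot be discarded, since $Np_2\asymp\varepsilon^{-1}$. In $d=5$ first moments are sharp, so a Green-function argument for (b) is plausible there.
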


\begin{rem}
 Our methods, if carried out quantitatively, could potentially yield a convergence rate of at least $N^{-\delta}$ for the coupling error, for some $\delta>0$, but we do not pursue this direction in this paper.
\end{rem}

The significance of Theorem~\ref{thm:intro-coupling} should be understood in
the context of earlier uses of weak dependence for high-dimensional random
walks. A recurring theme in high-dimensional random walk theory is that
well-separated pieces of the trajectory interact only weakly.  This idea
appears, in different forms and at different levels of precision, in the study
of the volume of the range, cut times, loop-erased random walk, and the capacity of the
range. Such weak-dependence phenomena are closely related to the emergence of
mean-field behavior above the relevant critical dimension and logarithmic
corrections at criticality.
However, previous
results typically use such decorrelation estimates to study a particular
functional of the path.  They do not describe the joint law of all
intersection events between time blocks.

Theorem~\ref{thm:intro-coupling} is of a different nature.  It upgrades
one-edge intersection estimates to a graph-level independence statement.
While previous estimates determine the correct marginal probabilities of the
edges of the coarse-grained graph, our theorem shows that, at the scale
\(a_n\), the whole graph can be coupled, with high probability, with an
independent long range percolation graph.  
To the best of our knowledge, this
is the first result which identifies the entire coarse-grained intersection
graph of a \(d\)-dimensional random walk, \(d\geq4\), with an independent
long range percolation graph in such a strong coupling sense.

Let us briefly explain the main idea behind the proof of
Theorem~\ref{thm:intro-coupling}.  The first step is to show that, at the
coarse-graining scale \(a_n\) chosen in the theorem, long edges of \(G_1\)
typically do not overlap.  More precisely, configurations such as two edges
\((i,j)\) and \((k,l)\) with \(i<k<j<l\) are shown to have negligible
probability.  Hence, with high probability, all edges of \(G_1\) of length at
least two form a non-overlapping family.  For a fixed non-overlapping family
of long edges \(e_1,\ldots,e_q\), the corresponding intersection events are
supported on disjoint time intervals, and their probability factorizes as
\[
    \mathbb{P}(e_i\in G_1 \ \text{for all }1\leq i\leq q)
    =
    \prod_{i=1}^q \mathbb{P}(e_i\in G_1).
\]
%The one-edge probabilities are then evaluated by using the sharp long range
%intersection estimates of \cite{ShiraishiWatanabe2026}.

While the computation above can show that events of the form $\mathbbm{1}[e_i \in G_1]$ would have the same probability in long range percolation and in our random walk, this, by itself, is insufficient to obtain a coupling for events of the form $\mathbbm{1}[\{e_1,\ldots, e_m\} = G_1]$ in general, even if the edges $e_1,\ldots, e_m$ are non-overlapping. Indeed, the central issue is that while the probability that $e \not \in G_1$ is easy to compute for the long range percolation, the non-existence of some long range intersection has an effect on the global shape of the random walk. To deal with this issue in this paper, we first fix a finite parameter $R$ and observe that the probability of existence of an edge of length greater than $R$ will go to $0$ uniformly in $n$ as $R \to \infty$. With the introduction of this length-scale cutoff, the existence or non-existence of an edge of length less than $R$ will have a finite-range effect and, as a consequence, one can obtain more exact estimates to control the total variation distance between the graph generated by the random walk and the long range percolation.

We then apply this structural result to the effective resistance $R_n$ and the graph distance $D_n$.  Both
quantities measure how the self-intersections of the path create shortcuts:
without self-intersections the intrinsic distance would simply be the elapsed
time, whereas loops and long range intersections can substantially reduce it. The coupling with the long range percolation will be helpful in exactly characterizing the reductions due to the long range intersections.

We now explain how Theorem~\ref{thm:intro-coupling} enters the proof of
Theorem~\ref{thm:intro-stable}.  Divide the random walk path into mesoscopic
blocks of length \(a_n\).  The first step is to pass from the microscopic trace
to a coarse-grained description.  Roughly speaking, when a coarse-grained path passes through one block, the corresponding cost for graph distance or effective
resistance should be close to the typical cost \(\mathbb{E} X_{a_n}\) of a block.
This replacement is a key technical point.  Indeed, an inter-block
intersection may enter a block at an arbitrary point, and the intrinsic cost
between two points in the same block depends strongly on their relative
positions: crossing a whole block and moving between two nearby points are very
different tasks.  A substantial part of the proof is devoted to controlling
these local irregularities and showing that, at the level relevant for the
fluctuation theorem, each block can nevertheless be assigned its typical cost.

Once this local-cost approximation is established, the leading correction to
additivity comes from intersections between different blocks.  Such
intersections create shortcuts between distant parts of the path, and the
coarse-grained graph \(G_1\) records precisely these shortcuts.  By
Theorem~\ref{thm:intro-coupling}, \(G_1\) can be replaced, with high
probability, by the independent long range percolation graph \(G_2\).  The
fluctuation problem in dimensions four and five is then reduced to an explicit
calculation on \(G_2\): the number of long edges of each length is essentially
independent, and a long edge of length \(r\) removes a cost of order
\(r\,\mathbb{E} X_{a_n}\).  The heavy-tailed distribution of these edge lengths
is the source of the stable limits in Theorem~\ref{thm:intro-stable}.

We next clarify the role of the technical input from \cite{ShiraishiWatanabe2026}; it is used as an a priori input for first-order asymptotics of the graph distance and effective resistance. 
%We remark here that the only role of \cite{ShiraishiWatanabe2026} is as an a-priori input for these first-order asymptotics.
The central difficulty in deriving the  coupling with long range percolation was to obtain optimal estimates for the simultaneous occurrence of a number of edges growing with $n$. Indeed, a central point is to argue that the effect of overlapping edges is negligible for the fluctuations. Beyond this, another key challenge was to relate the existence of long range edges to the graph distance and effective resistance. In order to isolate the effect of each long range intersection, one needs to impose that each of these intersections is closely paired with local and global cut times. In addition, one needs to show, with high probability, that the graph distance along a long range connection is roughly equal to the expected value of the cost of the long range connection. Both of these require careful decoupling estimates that were derived in this paper.

\subsubsection{Open Problems}

We believe here that the coupling we derived between long range percolation and the long range intersections of the random walk range is a powerful tool and it can be used to analyze other detailed functionals of the random walk range.

A first problem involves obtaining detailed estimates regarding the loop-erased random walk.
\begin{open}
What is the scaling limit of the fluctuations of the length of the loop-erased random walk up to time $n$ or the graph distance in the uniform spanning tree? More precisely, if one considers the same type of fluctuation as in this paper, to what limiting process does it converge? 
\end{open}

The construction of the loop erased random walk depends very strongly on the long range edges, but one would have to take into account the time ordering of the appearances of long range intersections.

The structure of the long range intersections would also be key to deriving mixing time estimates for a random walk run on the random walk range.
\begin{open}
Consider a random walk run on the graph of the random walk range $\{S_0,\ldots,S_n\}$. Determine the second-order fluctuations and limiting distribution of the mixing time of this process.
\end{open}

Beyond this, we believe that many aspects of our analysis of the fluctuations would be useful for other probability models. In particular, it should be useful to analyze fluctuations in one-dimensional and higher-dimensional versions of long range percolation.
\begin{open}
A similar problem can be considered for the graph distance/effective resistance in long range percolation with nearest-neighbor edges always present for $d=1$ and $s>2$, and we expect an analogous result to hold. On the other hand, for long range percolation with $d\ge 2$ and $s>2d$, our prediction is different. When $s$ is sufficiently large, we expect the fluctuation to converge to a Gaussian. However, when $s$ is close to $2d$, we conjecture that the limiting process is not necessarily stable.
\end{open}

The rest of the paper is organized as follows.  Section~\ref{sec:preliminaries}
fixes notation and collects preliminary estimates.  Section~\ref{sec:weak-coupling-four}
proves the coupling theorem in dimension four.  Section~\ref{sec:four-dimensional-lrp}
then uses this coupling to relate graph distance and effective resistance to
long range percolation and to prove the four-dimensional part of
Theorem~\ref{thm:intro-stable}.  
In the Appendix, we only provide a sketch of the proof in five dimensions.

\section{Preliminary estimates and notation}
\label{sec:preliminaries}

This section fixes the notation used throughout the paper and collects the estimates
that will be used in Sections~\ref{sec:weak-coupling-four} and
\ref{sec:four-dimensional-lrp}.  We first introduce the random walks, trace graphs,
graph distance, effective resistance, cut times, and the probabilistic notation.
Subsections~\ref{subsec:block-decomposition-and-inputs} and
\ref{subsec:freezing-four} collect the four-dimensional estimates from earlier work
that will be used later, together with the block notation needed to apply them.
The source of each external estimate is indicated when it is first stated, and
subsequent applications will refer to the labels introduced in these subsections.
In Subsection~\ref{subsec:variance-from-one-scale}, we prove two additional estimates
specific to the present paper: a one-scale second-moment bound for the shortcut loss
and the resulting variance upper bound $\operatorname{Var}(X_n)\le Cn^2(\log n)^{-2}$
for  $X_n=D_n$ and $X_n=R_n$ in dimension four (see Lemma \ref{lem:variance-upper-four} below).

\subsection{Notation and basic definitions}
\label{subsec:notation}

Let $S=(S_m)_{m\ge0}$ be a simple random walk on $\mathbb Z^d$.  The law and expectation
of $S$ starting from $x\in\mathbb Z^d$ are denoted by $\mathbb P^x$ and $\mathbb E^x$;
we write $\mathbb P=\mathbb P^0$ and $\mathbb E=\mathbb E^0$.  When several independent
walks are used, they are denoted by $S^1,S^2,S^3,\ldots$.  Their laws and expectations are
written as $\mathbb P_1,\mathbb P_2,\ldots$ and $\mathbb E_1,\mathbb E_2,\ldots$ when the
walks start from the origin, and as $\mathbb P_i^x,\mathbb E_i^x$ when $S^i_0=x$.

For $n\ge0$ and $x,y\in\mathbb Z^d$, we denote by $p_n(x,y):=\mathbb P^x(S_n=y)$ the $n$-step transition probability of simple random walk.  
We also write $p_n(x):=p_n(0,x)$. 
By translation invariance, $p_n(x,y)=p_n(y-x)$. 
When $d\ge4$, we define the Green function by $G(x,y):=\sum_{n=0}^{\infty}p_n(x,y)$ for $x,y\in\mathbb Z^d$. 
We also write $G(x):=G(0,x)$.  Since a simple random walk is transient for
$d\ge3$, the Green function is finite; in this paper we use it only in the
cases $d=4, 5$.

For $x=(x_1,\ldots,x_d)\in\mathbb Z^d$, $|x|$ denotes the Euclidean norm.  For a set
$A\subset\mathbb Z^d$, we write $\operatorname{dist}(x,A)=\inf_{y\in A}|x-y|$. 
We denote the cardinality of $A$ by $\# A$ or $|A|$. 
If $Y$ is a square-integrable random variable, then $\|Y\|_2=\bigl(\mathbb E[Y^2]\bigr)^{1/2}$. 
For a bounded real-valued function $F$ on a set $E$, we denote its supremum norm by $\|F\|_{\infty}:=\sup_{x\in E}|F(x)|$. 
%We write $\mathbf 1_A$ for the indicator of an event $A$.

For integers $0\le a\le b$, we use the notation
$S[a,b]=\{S_m:a\le m\le b\}$, $S(a,b]=\{S_m:a<m\le b\}$, 
and similarly for $S^i[a,b]$.  If $I=[a,b]$ is a time interval, then $S(I)$ means $S[a,b]$.
For two time intervals $I$ and $J$, we write
\[
        I\leftrightarrow J
        \qquad\Longleftrightarrow\qquad
        S(I)\cap S(J)\ne\emptyset .
\]

For $0\le a\le b$, let $G_{a,b}$ be the unoriented graph generated by the trace of the walk
between times $a$ and $b$:
\[
        V(G_{a,b})=\{S_m:a\le m\le b\},
        \qquad
        E(G_{a,b})=\bigl\{\{S_m,S_{m+1}\}:a\le m\le b-1\bigr\}.
\]
Multiple crossings of the same edge are ignored.  For a finite connected graph $G=(V(G),E(G))$, $d_G(\cdot,\cdot)$
denotes the graph distance on $G$.  
For $x,y\in V(G)$, the effective resistance, when each edge has unit resistance, between $x$ and $y$ in $G$ is denoted by $R_{G} (x,y).$

 We set $D_n=d_{G_{0,n}}(S_0,S_n)$, $R_n=R_{G_{0,n}}(S_0,S_n)$. 
When a statement applies to both quantities, we write $X_n\in\{D_n,R_n\}$.  More generally,
for $0\le a<b$, define
\[
        X[a,b]=
        \begin{cases}
        d_{G_{a,b}}(S_a,S_b),& X=D,\\
        R_{G_{a,b}}(S_a,S_b),& X=R.
        \end{cases}
\]
Thus $X[0,n]=X_n$.

A time $T\in\{0,1,\ldots,n\}$ is called a cut time up to $n$ if
\[
        S[0,T]\cap S[T+1,n]=\emptyset .
\]
When the terminal time is clear, we also call such a time a global cut time.  For the infinite
trajectory, a time $T$ is a global cut time if $S[0,T]\cap S[T+1,\infty)=\emptyset $. 
If $a<T<b$, then $T$ is called a local cut time between $a$ and $b$ if
$S[a,T]\cap S[T+1,b]=\emptyset $. 
These definitions will be used both for the original walk and for independent copies. 

Throughout the paper, $C,c,C_1,c_1,\ldots$ denote positive constants whose values may change
from line to line.  Unless explicitly stated otherwise, constants may depend on the dimension
and on fixed auxiliary parameters, but not on $n$. We use the standard Landau notation throughout the paper.  Thus, for two
positive functions $f(n)$ and $g(n)$, the notation $f(n)=O(g(n))$ means that
there exists a constant $C<\infty$ such that $|f(n)|\le Cg(n)$ for all
sufficiently large $n$, while $f(n)=o(g(n))$ means that $f(n)/g(n)\to0$ as
$n\to\infty$.  When the constant in the $O(\cdot)$ notation depends on fixed
parameters, such as $\varepsilon$ or $\delta$, this dependence will either be
clear from the context or indicated by a subscript, for example
$O_{\varepsilon,\delta}(\cdot)$. For two positive sequences $(a_n)$ and $(b_n)$, we write
$a_n\sim b_n$ if $\lim_{n\to\infty}\frac{a_n}{b_n}=1$. 
We write $a_n\asymp b_n$ if there exist constants $0<c<C<\infty$ such that $c b_n\le a_n\le C b_n$ for all sufficiently large $n$.  The constants $c$ and $C$ may depend on the dimension and on other fixed parameters, but not on $n$.
We shall also use the probabilistic Landau notation: $Y_n=O_{\mathbb P}(a_n)$
means that $Y_n/a_n$ is tight, while $Y_n=o_{\mathbb P}(a_n)$ means that
$Y_n/a_n\to0$ in probability.

To avoid notational clutter, we will often ignore integer-part issues.  For
example, quantities such as $\varepsilon^{-1}\log n$,
$\varepsilon n(\log n)^{-1}$, and $\delta\varepsilon n(\log n)^{-1}$ will be
treated as integers when they appear as endpoints of time intervals or as
numbers of blocks.  This convention does not affect any of the estimates below;
all statements can be made completely rigorous by inserting floor or ceiling
functions, at the cost of harmless $O(1)$ errors. 
Set 
\[
        \qquad a_n^{(\beta)}=n(\log n)^{-\beta},
        \qquad r_n^{(\beta)}=(\log n)^\beta.
\]

\subsection{Four-dimensional block decomposition and known estimates}
\label{subsec:block-decomposition-and-inputs}

We specialize to dimension four.  This subsection introduces the block
decomposition used in the present paper and records the previously known
four-dimensional estimates needed in the subsequent arguments.  The results
below come from several sources, and the provenance of each external estimate
is indicated when it is first stated.  In later sections, we will refer to the
equation and proposition labels introduced here rather than repeatedly citing
the original sources.

  As mentioned in Section~1, the critical scale for non-trivial intersections is
$n(\log n)^{-1}$.  More precisely, if $a>0$ and the interval $[0,n]$ is divided into
$(\log n)^a$ subintervals of length $n(\log n)^{-a}$, then
\begin{align}
&\mathbb P\Bigl(
\exists\,1\le i<j\le (\log n)^a \text{ with } j\ge i+2
\text{ such that } \notag\\
&\hspace{2.0cm}
S[(i-1)n(\log n)^{-a},in(\log n)^{-a}]
\cap
S[(j-1)n(\log n)^{-a},jn(\log n)^{-a}]
\ne\emptyset
\Bigr) \notag\\
&\hspace{1.0cm}=
\begin{cases}
o(1),&0<a<1,\\
c+o(1),&a=1,\\
1-o(1),&a>1,
\end{cases}
\label{Eq1}
\end{align}
where $c\in(0,1)$.  This follows from the sharp long range intersection estimates  derived in
\cite[Chapter~4]{Lawler1991}. Although \eqref{Eq1} itself will not be used in this paper, we record it in
order to clarify the relation between the four-dimensional and
five-dimensional estimates below.

Fix $\varepsilon\in(0,1)$ and set $N=N_\varepsilon:=\varepsilon^{-1}\log n$. 
We divide $[0,n]$ into $N$ intervals of equal length:
\begin{equation}
\label{Int}
[0,n]=[0,\varepsilon n(\log n)^{-1}]\cup\cdots\cup
[(N-1)\varepsilon n(\log n)^{-1},N\varepsilon n(\log n)^{-1}].
\end{equation}
Write
\[
        t_i=t_i^\varepsilon=i\varepsilon n(\log n)^{-1},
        \qquad
        I_i=I_i^\varepsilon=[t_{i-1},t_i],
        \qquad 1\le i\le N,
\]
and $\hat{X}_i:= X[t_{i-1}, t_i]$.

The estimates \eqref{Eq2}, \eqref{eq:psi-slow-variation}, and
\eqref{Eq8} below are consequences of
\cite[Theorem~1.1 and equation~(3)]{ShiraishiWatanabe2026}. For both graph distance and effective
resistance, there exist constants $b_X>0$ and $\theta>0$ such that
\begin{equation}
\label{Eq2}
        \mathbb E[X_n]=b_Xn(\log n)^{-1/2}
        \bigl\{1+O((\log n)^{-\theta})\bigr\}.
\end{equation}
 Here $b_X$ is a positive constant depending only on the choice of $X\in\{D,R\}$. 
We will also use the slow-variation consequence: for every fixed $A>0$,
\begin{equation}
\label{eq:psi-slow-variation}
        \frac{\mathbb E[X_m]}{m}
        =
        \frac{\mathbb E[X_n]}{n}
        \bigl\{1+O((\log n)^{-\theta})\bigr\},
        \qquad n(\log n)^{-A}\le m\le n.
\end{equation}
Define
\begin{equation}
\label{Eq4}
Y_n^{\varepsilon}
=
\frac{X_n-\sum_{i=1}^N \hat{X}_i}{n(\log n)^{-3/2}}
+
\frac{\mathbb E[\sum_{i=1}^N \hat{X}_i]-\mathbb E[X_n]}{n(\log n)^{-3/2}},
\quad 
Z_n^{\varepsilon}
=
\frac{\sum_{i=1}^N \hat{X}_i-\mathbb E[\sum_{i=1}^N \hat{X}_i]}{n(\log n)^{-3/2}}.
\end{equation}
Then, for both graph distance and effective resistance,
\begin{equation}
\label{Eq5}
        \frac{X_n-\mathbb E[X_n]}{n(\log n)^{-3/2}}
        =Y_n^{\varepsilon}+Z_n^{\varepsilon}.
\end{equation}
The term $Z_n^{\varepsilon}$ is the fluctuation of the independent block costs,
while $Y_n^{\varepsilon}$ records the loss caused by shortcuts between distinct
blocks.  
%Thus \eqref{Eq3}--\eqref{Eq5} are identities for both $X=D$ and $X=R$.

The dyadic expectation loss will also be used.  There exists $\delta_0>0$ such that 
%for each $X\in\{D,R\}$,
\begin{equation}
\label{Eq8}
        \mathbb E[X[0,n/2]+X[n/2,n]-X_n]
        =
        \frac{b_X\log2}{2}\,n(\log n)^{-3/2}
        \bigl\{1+O((\log n)^{-\delta_0})\bigr\}.
\end{equation}

We shall also use the following sharp two-block intersection estimate.
\begin{proposition}(\cite[Proposition~2.3]{ShiraishiWatanabe2026})
\label{prop:sharp-lri-4d}
Let $S^1$ and $S^2$ be independent simple random walks on $\mathbb Z^4$ started from
$0$.  For $k\ge1$, set
\begin{align*}
        f(n,k)=
        \mathbb P\bigl(S^1[0,n]\cap S^2[kn,(k+1)n]\ne\emptyset\bigr).
\end{align*}
Then, uniformly in the range of $k$ used in this paper,
\begin{equation}
\label{eq:SW-prop23}
        f(n,k)=
        \frac{1}{2\log n}
        \log\left(1+\frac{1}{k^2+2k}\right)
        \left\{1+O\left(\frac{1}{k(\log n)^{3/25}}\right)\right\}.
\end{equation}
\end{proposition}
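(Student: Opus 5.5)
The plan is to follow Lawler's second-moment method for long range intersections in four dimensions \cite[Chapter~4]{Lawler1991}. We write the non-intersection probability as a ratio of a first moment to a conditional first moment.

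\emph{Step 1 (first moment).} Let
\[
J=\sum_{i=0}^n\sum_{j=kn}^{(k+1)n}\1\{S^1_i=S^2_j\}.
\]
Since $S^1_i-S^2_j$ is distributed as $S_{i+j}$, we get $\E J=\sum_{i,j}p_{i+j}(0)$. In $d=4$ the local CLT gives $p_m(0)=\frac{8}{\pi^2 m^2}(1+O(m^{-1}))$ on the correct parity, so that averaged over parity $p_m(0)\approx \frac{4}{\pi^2m^2}$. Comparing the double sum with an integral over $[0,1]\times[k,k+1]$ gives
\[
\E J=\frac{4}{\pi^2}\log\frac{(k+1)^2}{k(k+2)}+O(n^{-1})
=\frac{4}{\pi^2}\log\Bigl(1+\frac1{k^2+2k}\Bigr)+O(n^{-1}).
\]
Note that the $k$-dependence of the statement comes entirely from this first moment.

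\emph{Step 2 (last-intersection decomposition).} Decompose on the last intersection, in a suitable ordering of the pairs $(i,j)$, in the style of Lawler's proof. This yields
\[
\E J=\sum_{(i,j)}\Pp(\text{last intersection at }(i,j))\,\E[J\mid \cdot\,],
\]
and hence
\[
f(n,k)=\frac{\E J}{\E[J\mid \text{intersection}]}\,(1+\text{error}).
\]
The conditional expectation counts the intersections of two-sided walks emanating from the common point $S^1_i=S^2_j$. Locally this is $\sum_{|s|,|t|\le n}p_{|s|+|t|}(0)$ summed over the two-sided pieces. This is of order $\frac{8}{\pi^2}\log n$ when the point is in the bulk of both time windows, and the bound is uniform up to a relative error of order $\log\log n/\log n$ coming from points within $n(\log n)^{-a}$ of the window endpoints. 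The two factors combine to give the constant $\frac{1}{2\log n}$. This is the same constant that appears in $\Pp(S^1[0,n]\cap S^2[0,n]\neq\emptyset)$-type estimates and in \eqref{Eq1}, which gives a consistency check.

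\emph{Step 3 (main obstacle).} The real difficulty is justifying that, conditionally on the last intersection, the future and past pieces look like unconditioned walks. The event ``no later intersection'' biases the local geometry, which is exactly the four-dimensional slowly varying non-intersection issue. I would handle it as Lawler does. First, truncate to the bulk set of pairs $(i,j)$ with $i\in[n(\log n)^{-a},n-n(\log n)^{-a}]$ and the analogous condition on $j$. Second, use the fact that in $d=4$ the escape/non-intersection probability $\Pp(S^1[0,m]\cap S^2(0,m]=\emptyset)$ is $\asymp(\log m)^{-1/2}$ and varies slowly over scales $m\in[n(\log n)^{-a},n]$. This replaces the conditioned number of intersections by its unconditioned mean with relative error $(\log n)^{-c}$. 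Optimizing the truncation exponent against the error in the slow-variation estimate is what yields the exponent $3/25$. For large $k$ the dominant pairs are concentrated in a region where $S^1$ and $S^2$ are at distance about $\sqrt{kn}$, so boundary effects are relatively smaller. This is why the relative error carries the factor $k^{-1}$. I would obtain it by separately bounding the contribution of pairs with $i$ or $j-kn$ within $n(\log n)^{-a}$ of the endpoints, which is $O(k^{-3}(\log n)^{-a})$ against the main term of order $k^{-2}$.
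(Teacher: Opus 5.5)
You set up the right framework, and your Step~1 is correct. The paper itself gives no proof here: it imports the statement from \cite[Proposition~2.3]{ShiraishiWatanabe2026}. The remarks in the proof of Lemma~\ref{lem:internalcutpoint} indicate that the argument there decomposes at the last time $\tau$ in the first block that hits the second block and at the first time $\sigma$ in the second block at $S_\tau$. It then applies a local CLT after excising windows of length $n(\log n)^{-100}$.

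There is, however, a genuine gap in Steps~2--3, where all the content lies. Write $f(n,k)=\sum_{(i,j)}\mathbb P(S^1_i=S^2_j)\,\mathbb P(E_{ij}\mid S^1_i=S^2_j)$, with $E_{ij}$ the event that $(i,j)$ is the last intersection. You need $\mathbb P(E_{ij}\mid S^1_i=S^2_j)=\frac{\pi^2}{8\log n}\{1+O((\log n)^{-c})\}$ uniformly over bulk pairs. Your version, $\mathbb E[J\mid\text{last at }(i,j)]\sim\frac{8}{\pi^2}\log n$, is equivalent and no easier. The conditioning event is that the future of $S^1$ avoids the two-sided path of $S^2$ through the meeting point. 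It has probability $\asymp 1/\log n$ and biases exactly the local configuration that $J$ counts. Showing that this bias is $1+o(1)$ \emph{is} the theorem.

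The input you propose cannot do this. $\mathbb P(S^1[0,m]\cap S^2(0,m]=\emptyset)\asymp(\log m)^{-1/2}$ is the one-sided non-intersection probability, which is not the quantity the decomposition produces. Moreover, an estimate known only up to constants, however slowly varying, cannot yield the sharp constant $\frac{1}{2\log n}$. What is missing is:
\begin{enumerate}
\item Lawler's sharp four-dimensional escape asymptotic for a one-sided walk against a two-sided path \cite[Chapter~4]{Lawler1991}, with a quantitative rate.
\item A decoupling step. Localize $E_{ij}$ to windows of length $n(\log n)^{-A}$ around $(i,j)$. Use the Markov property and the local CLT, with a supremum over the displacement of the local pieces, to show that the meeting probability is unchanged to relative order $(\log n)^{-c}$. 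Then use a freezing or long range bound to show that intersections outside the windows are negligible given local escape.
\end{enumerate}
Note also that $\sum_{|s|,|t|\le n}p_{|s|+|t|}(0)$, as written, runs over both sides of both walks and is $\approx\frac{16}{\pi^2}\log n$. The value $\frac{8}{\pi^2}\log n$ holds only because, given the last intersection, $J$ counts the past of $S^1$ against the two-sided path of $S^2$. This factor $2$ is exactly what fixes the constant.

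Your account of the factor $k^{-1}$ in the error is also incorrect. Pairs with $i\le\eta n$ contribute
$\frac{4}{\pi^2}\int_0^{\eta}\bigl(\frac{1}{s+k}-\frac{1}{s+k+1}\bigr)\,ds\approx\frac{4\eta}{\pi^2k(k+1)}$
to $\mathbb E J$, and the same holds for $j-kn\le\eta n$. That is a fraction $\asymp\eta$ of the main term $\asymp k^{-2}$, not $O(k^{-3}\eta)$. So the boundary gives no gain in $k$. The exponent $3/25$ is asserted rather than derived. Repaired along the lines above, your argument would give a relative error $O((\log n)^{-c})$ uniformly in $k$. That is all the paper actually uses, namely \eqref{eq:weak-q-p-fixed-R} for fixed $R$ and the upper bound \eqref{eq:block-intersection-upper}. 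It is nonetheless weaker than the statement as written.
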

 Applied to the blocks $I_i$
above, it gives, uniformly in the range used below,
\begin{equation}
\label{Eq14}
        \mathbb P(I_i\leftrightarrow I_j)
        =\frac{1+o(1)}{2\log n}
        \log\left(1+\frac{1}{(j-i)^2-1}\right),
        \qquad |i-j|\ge2.
\end{equation}
In particular,
\begin{equation}
\label{eq:block-intersection-upper}
        \mathbb P(I_i\leftrightarrow I_j)
        \le \frac{C}{(i-j)^2\log n},
        \qquad |i-j|\ge2.
\end{equation}

\subsection{Frozen-path and separation estimates}
\label{subsec:freezing-four}

We next state the uniform hitting estimate for a frozen four-dimensional random walk path.
Let $S^1$ and $S^2$ be independent simple random walks on $\mathbb Z^4$ started from the
origin.  For $q\ge1$, define
\begin{equation*}
%\label{Eq12}
\Xi_q:=\max\Bigl\{
\mathbb P_1^x\bigl(S^1[0,n]\cap S^2[0,n]\ne\emptyset\bigr):
 x\in\mathbb Z^4,
 \operatorname{dist}(x,S^2[0,n])\ge \sqrt n(\log n)^{-q}
\Bigr\}.
\end{equation*}
This is a random variable measurable with respect to $S^2[0,n]$.

\begin{lemma}[Freezing lemma]
\label{lem:freezing-four}
For every $p,q\ge1$, there exists $C=C_{p,q}<\infty$ such that
\begin{equation}
\label{Eq13}
\mathbb P_2^0\left(
\Xi_q\le \frac{C(\log\log n)^C}{\log n}
\right)
\ge 1-C(\log n)^{-p}.
\end{equation}
\end{lemma}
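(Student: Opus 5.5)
Write $\Xi_q$ as a supremum of a harmonic-type function of the frozen path. For fixed $S^2[0,n]$, let $h(x)=\mathbb P_1^x(S^1[0,n]\cap S^2[0,n]\ne\emptyset)$. Since the escape probability of a 4-dimensional walk from a path is only logarithmically small, the plan is to compare $h(x)$ with a Green's function sum. We control it by the ratio of an expected number of intersections to a conditional expectation given an intersection. For this we use the standard last-exit/first-hit decomposition
\[
h(x)\le \frac{\sum_{y\in S^2[0,n]}G_n(x,y)}{\min_{y\in S^2[0,n]}\mathbb E^y\bigl[\#\{m\le n: S^1_m\in S^2[0,n]\}\bigr]},
\]
where $G_n$ is the Green's function truncated at time $2n$ (one may use $2n$ to absorb the restart). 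Both numerator and denominator are functionals of $S^2$ alone, so it suffices to show three things with probability $1-C(\log n)^{-p}$. First, the numerator is at most $C(\log\log n)^C$ uniformly over $x$ with $\operatorname{dist}(x,S^2[0,n])\ge\sqrt n(\log n)^{-q}$. Second, the denominator is at least $c\log n$ uniformly over the points $y$ of the path. Third, a slight refinement handles the fact that the minimum over all $y$ may be small at some bad points.

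For the numerator, decompose $S^2[0,n]$ dyadically by distance from $x$. The number of path points in a ball $B(x,r)$ is at most $C r^2(\log n)^{C}$ for all $r\ge \sqrt n(\log n)^{-q}$ simultaneously, outside an event of probability $(\log n)^{-p}$. This follows from moment bounds for the occupation time of a ball together with a union bound over a polynomial net of centers and dyadic radii. Summing $G(x,y)\asymp |x-y|^{-2}$ over $O(q\log\log n)$ dyadic shells between $\sqrt n(\log n)^{-q}$ and $\sqrt n(\log n)^{C}$ gives $O(1)$ per shell, hence $C(\log\log n)^C$. The far part of the path is excluded by a Gaussian tail bound on $\max|S^2|$.

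For the denominator, the expected intersection count from $y=S^2_t$ is at least the local contribution $\sum_{s}G(S^2_t,S^2_s)$ over $|s-t|\le n$. By standard 4-d estimates its mean is $\asymp\log n$, and since it is a sum over $\log n$ dyadic scales of roughly independent $O(1)$ terms it concentrates. For the lower bound one uses a variant restricted to a random window around the hitting time, so the minimum over $y$ is replaced by a typical value.

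The main obstacle is the denominator, because a minimum over $n$ points of a path is needed, and at exceptional points (for example near the ends of the path, or at points where the local path is atypically sparse at many scales) $\sum_s G$ could be much smaller than $\log n$. I would handle this with the following refinement. Decompose according to the first hitting time $\sigma$ of $S^2[0,n]$ by $S^1$ and the index $t$ of the hit point. Then apply the strong Markov property and require only that the count over $s\in[t-n(\log n)^{-K},t+n(\log n)^{-K}]$ is at least $c\log n$, which is enough when $K$ is a fixed constant. To show this holds for all $t$ outside probability $(\log n)^{-p}$, I would use exponential concentration across dyadic scales, via a martingale argument over scales in which each scale contributes an independent-like bounded term of mean $c$. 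Combined with a union bound over $t$ in a grid of mesh $(\log n)^{-2K}n$ and continuity in $t$, this yields a deviation probability $e^{-c\log n}$ per grid point, which sums to the required bound. The polylog losses from these steps are absorbed into the $(\log\log n)^C$ factor in \eqref{Eq13}.
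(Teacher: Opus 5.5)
Your reduction $h(x)\le\sum_{y}G_{2n}(x,y)\big/\min_{y}\mathbb E^y[\#\{m\le n:S^1_m\in S^2[0,n]\}]$ is correct. Bounding the numerator by a density estimate and the denominator by local intersection counts is a reasonable self-contained substitute for what the paper does; it only quotes the good-path class and the quenched hitting bound of \cite[Propositions~4.1 and~4.3]{Lawler1992}. However, two of your steps fail as written.

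First, the numerator. The bound $\#(S^2[0,n]\cap B(x,r))\le Cr^2(\log n)^C$ gives a contribution of $(\log n)^C$ per dyadic shell, not $O(1)$. This is fatal for a $(\log\log n)^C/\log n$ target, and a union bound over a polynomial net forces exactly this kind of loss. You need $Cr^2\log\log n$ instead, which is available:
\begin{itemize}
\item since $r\ge\sqrt n(\log n)^{-q}$ and the path stays in $B(0,\sqrt n\log n)$ outside probability $e^{-c(\log n)^2}$, a net of mesh $r/4$ has only $(\log n)^{O(q)}$ points;
\item the time spent in $B(z,2r)$ has exponential tails on the scale $r^2$, so a threshold $\lambda r^2$ with $\lambda=C\log\log n$ survives the union bound.
\end{itemize}
This gives $O(\log\log n)$ per shell and $O((\log\log n)^2)$ in total.

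The more serious gap is the denominator. The path is frozen, and the first hitting point has the harmonic-measure law from $x$, which favours exposed, atypical points. So nothing lets you replace the minimum by a ``typical value''. Your grid-plus-continuity argument also does not control the minimum. The local count at $t$ is carried by the dyadic scales $|s-t|\le n(\log n)^{-2K}$, which are all but $O(K\log\log n)$ of the $\asymp\log n$ scales. These scales are determined by increments near $t$, which are unrelated to the increments near the nearest grid point. Continuity in $t$ therefore only controls an $O(K\log\log n)$ part of the sum.

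A genuine union bound would have to run over essentially all $n$ times, and would need a per-time lower-tail bound of order $n^{-1}(\log n)^{-p}$. Your martingale sketch does not give this. The conditional mean of a scale's contribution is not bounded below uniformly in the past, because the past can put $S^2_{t+2^k}$ at distance $\lambda 2^{k/2}$ from $S^2_t$ with $\lambda$ large. Even with such a lower bound $\mu$, one only gets a rate like $(1-\mu)^{\log_2 n}$.

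The economical fix is to drop uniformity altogether:
\begin{itemize}
\item Call $t$ bad if $\mathbb E^{S^2_t}[\#\{m\le n:S^1_m\in S^2[0,n]\}]<c\log n$, and split according to whether $S^1$ first hits a good or a bad point.
\item The good part is bounded by your ratio, with the minimum taken over good points only.
\item The bad part is at most $\sum_{t\ \mathrm{bad}}G(x,S^2_t)\le Cn^{-1}(\log n)^{2q}\,\#\{t\ \mathrm{bad}\}$.
\item By Markov's inequality, the bad part is $O((\log n)^{-1})$ outside probability $2(\log n)^{-p}$, as soon as $\max_t\mathbb P(t\ \mathrm{bad})\le(\log n)^{-2q-1-p}$.
\end{itemize}
That per-time bound is only polylogarithmic, and your scale-by-scale concentration does give it. To use it, you must still pass from the time sum $\sum_s G(S^2_t,S^2_s)$ to the sum over distinct points of the path. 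This requires a scale-by-scale lower bound on the range of the subpaths.
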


This follows from \cite[Propositions~4.1 and~4.3]{Lawler1992}.  The point is that, with
high probability, the path $S^2[0,n]$ belongs to a class of good deterministic paths; after
freezing such a path, another walk started at distance at least $\sqrt n(\log n)^{-q}$ hits it
with probability at most $C(\log\log n)^C/\log n$.

\subsection{The variance upper bound}
\label{subsec:variance-from-one-scale}

The two lemmas in this subsection are proved in the present paper.
The first gives a second-moment estimate for the shortcut loss at one
dyadic scale, and the second deduces the required variance bound.

We write
\[
        \mathcal E_n=X[0,n/2]+X[n/2,n]-X_n.
\]
The next lemma is the one-scale estimate used in the dyadic decomposition.  

\begin{lemma}
\label{lem:one-scale-second-moment}
%Let $d=4$ and let $X_n$ denote either $D_n$ or $R_n$.  
There exists $C<\infty$ such that
\begin{equation*}
%\label{eq:one-scale-second-moment}
        \mathbb E[(\mathcal E_n)^2]\le Cn^2(\log n)^{-2}.
\end{equation*}
\end{lemma}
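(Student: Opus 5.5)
We need to bound $\mathbb E[\mathcal E_n^2]\le Cn^2(\log n)^{-2}$, where $\mathcal E_n=X[0,n/2]+X[n/2,n]-X_n\ge 0$. The loss is nonnegative because of subadditivity (triangle inequality for $D$, series law plus Rayleigh monotonicity for $R$). It is nonzero only if $S[0,n/2]\cap S[n/2,n]$ contains points away from the junction. The plan is to dominate $\mathcal E_n$ by the length of the time window spanned by the "shortcut" between the two halves, and then control the second moment of that window.

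\textbf{Step 1: reduction to the longest shortcut.} Let $\sigma$ be the first time $\le n/2$ such that $S_\sigma\in S[n/2,n]$, and let $\tau$ be the last time $\ge n/2$ such that $S_\tau\in S[0,n/2]$. Any path from $S_0$ to $S_n$ in $G_{0,n}$ either stays within the two halves or crosses via an intersection point. Every edge of $G_{0,n}$ outside $S[\sigma,\tau]$ belongs to only one half, so we get the deterministic bound
\[
\mathcal E_n\le X[\sigma,n/2]+X[n/2,\tau]\le (n/2-\sigma)+(\tau-n/2).
\]
For $R$ this uses series resistance and $R\le D$. A cleaner version of the same bound: if there is a cut time $T_1\in[0,\sigma)$ and a cut time $T_2\in(\tau,n]$, then the segments outside $[T_1,T_2]$ contribute identically to all three terms, so $\mathcal E_n\le 2(T_2-T_1)$.

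\textbf{Step 2: tail of the window.} It then suffices to show $\mathbb P(n/2-\sigma\ge s)\le C\,s\,/(n\log n)$ for $s\le n/2$, and the same for $\tau-n/2$. Then
\[
\mathbb E[(n/2-\sigma)^2]=\int_0^{n/2}2s\,\mathbb P(n/2-\sigma\ge s)\,ds\le C n^2/\log n.
\]
This is too weak by a factor of $\log n$. So the bound must instead be the squared loss integrated against the Cauchy-type tail coming from \eqref{eq:block-intersection-upper}: an intersection between blocks at separation $k$ blocks of size $m$ has probability $\approx k^{-2}/\log n$. Doing a dyadic decomposition in the distance $s$ from the junction, the event that $S[n/2-2s,n/2-s]$ hits $S[n/2,n]$ has probability $\asymp 1/\log n$ uniformly in $s$ (scale invariance of the $\log(1+1/(k^2+2k))$ form, using Proposition~\ref{prop:sharp-lri-4d}). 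That again gives $\sum_s s^2/\log n\sim n^2/\log n$.

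The true gain must therefore come from the fact that the loss caused by a shortcut of time-span $s$ is not $s$ but roughly $\mathbb E X_s\asymp s(\log n)^{-1/2}$. The reason is that the path inside the window already had cost $X$ of order $s(\log s)^{-1/2}$, not $s$. So I will refine Step 1 to $\mathcal E_n\le X[\sigma,n/2]+X[n/2,\tau]$ and bound the second moment of $X$ over a random window, dyadically:
\[
\mathbb E[\mathcal E_n^2]\le C\sum_{j}\mathbb P(\text{window}\asymp 2^{-j}n)^{1/2}\,\mathbb E\bigl[X_{2^{1-j}n}^4\bigr]^{1/2}.
\]
By Cauchy–Schwarz this needs fourth moments $\mathbb E[X_m^4]\le C m^4(\log m)^{-2}$. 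These follow from $X_m\le$ (number of cut-type points) and standard moment bounds on cut-time counts (from \cite{ShiraishiWatanabe2026}). They give terms $(\log n)^{-1/2}\cdot 4^{-j}n^2(\log n)^{-1}$, which is still off.

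\textbf{Main obstacle.} Cauchy–Schwarz loses the independence, and avoiding this loss is the hard step. To get the sharp result, I will condition on the second half and use the Freezing lemma (Lemma~\ref{lem:freezing-four}). With high probability the hitting probability of $S[n/2,n]$ by the block $S[n/2-2s,n/2-s]$ is at most $C(\log\log n)^C/\log n$, essentially independently of $X[n/2-2s,n/2]$. That independence lets me write the contribution as $\mathbb P(\text{hit})\cdot\mathbb E[X_{2s}^2]\le C(\log n)^{-1}\cdot s^2(\log n)^{-1}$. The bound $\mathbb E[X_{2s}^2]\le Cs^2(\log n)^{-1}$ comes from \eqref{Eq2} together with a crude second-moment bound. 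Summing over the dyadic $s\le n$ gives $Cn^2(\log n)^{-2}$. The $(\log\log n)^C$ loss is avoided at scales $s\ge n(\log n)^{-A}$ by using the sharp estimate \eqref{eq:SW-prop23} in place of freezing, and smaller scales contribute negligibly.
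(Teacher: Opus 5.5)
Your proposal has a genuine gap, at the step you yourself flag as the main obstacle. The closing argument needs
\[
\mathbb E\bigl[X_{\rm win}^2\,;\,\text{window}\asymp s\bigr]\lesssim \mathbb P(\text{window}\asymp s)\,\mathbb E[X_{2s}^2],
\]
and nothing you invoke delivers this. Suppose you implement it with Lemma~\ref{lem:freezing-four}: condition on $S[0,n/2]$, which fixes both the window cost and the block that must be hit, and treat $S[n/2,n]$ as a fresh walk from $S_{n/2}$. Then the bound holds only up to a factor $(\log\log n)^C$, so you would prove $\mathbb E[\mathcal E_n^2]\le C(\log\log n)^Cn^2(\log n)^{-2}$. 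That is not the statement, and the loss is not harmless: it would propagate through Lemma~\ref{lem:variance-upper-four} into Corollary~\ref{cor:Zn-small-four}, where $\mathbb E[(Z_n^\varepsilon)^2]\le C\varepsilon$ is needed uniformly in $n$.

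Your cure, using \eqref{eq:SW-prop23} ``in place of freezing'' at large scales, does not work. That estimate is an annealed probability and says nothing about how the hit event correlates with the size of $X$ on the very segment that does the hitting. Also, $\mathbb E[X_{2s}^2]\le Cs^2(\log n)^{-1}$ is not a crude bound. It asserts $\mathbb E[X_m^2]\lesssim(\mathbb E X_m)^2$, which requires an upper-tail estimate for $X_m$; a variance bound would be circular, since Lemma~\ref{lem:variance-upper-four} is deduced from the present lemma. Your discarded fourth-moment route rested on ``$X_m\le$ number of cut points'', which is backwards: cut times bound $D_m$ from below.

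The missing ingredient is exactly that upper-tail estimate, \cite[Lemma~3.4]{ShiraishiWatanabe2026}: for suitable $K$, $\mathbb P(X[0,n/2]\vee X[n/2,n]>K\mathbb E[X_n])\le C(\log n)^{-3}$ (for $R$ via $R\le D$ and \eqref{Eq2}). With it the paper's proof takes two lines and needs no windows or decoupling. On the good event, $\mathcal E_n^2\le (X[0,n/2]+X[n/2,n])\mathcal E_n\le 2K\mathbb E[X_n]\,\mathcal E_n$. Hence $\mathbb E[\mathcal E_n^2]\le 2K\mathbb E[X_n]\mathbb E[\mathcal E_n]+Cn^2(\log n)^{-3}\le Cn^2(\log n)^{-2}$ by \eqref{Eq2} and \eqref{Eq8}. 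The first moment \eqref{Eq8} already carries the $1/\log n$ intersection cost you were trying to extract by hand.

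Your window approach can also be rescued with the same input, but Step~1 must be repaired first. The geodesic enters $S[0,n/2]\cap S[n/2,n]$ at some $S_t$ with $t\in[\sigma,n/2]$, not necessarily at $S_\sigma$, so you need $\max_{t\in[\sigma,n/2]}X[t,n/2]$ and its mirror image. For $R$, the series law and $R\le D$ only give upper bounds on $R_n$, so you must supply a lower bound separately (for example by shorting the intersection set to a point). Then apply \eqref{eq:quoted-max-tail} to the $O(\log n)$ dyadic windows adjacent to $n/2$. Off a negligible event this gives $\mathcal E_n\le CK(\tau-\sigma)(\log n)^{-1/2}+Cn(\log n)^{-A}$, and your Step~2 bound $\mathbb E[(\tau-\sigma)^2]\le Cn^2/\log n$ finishes the proof. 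The correct tail in Step~2 is $\mathbb P(n/2-\sigma\ge s)\le C\log(2n/s)/\log n$, not $Cs/(n\log n)$, but the second-moment conclusion is unchanged.
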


\begin{proof}
The triangle inequality in $G_{0,n}$, together with monotonicity under
adding edges (Rayleigh monotonicity when $X=R$), gives
\[
        0\le \mathcal E_n
        \le X[0,n/2]+X[n/2,n]
        \le n.
\]

By \eqref{Eq2}, %applied with $X=D$ and 
with the present choice of $X$,
there exists $K<\infty$ such that for all sufficiently large $n$,
\[
        \mathbb E[X_{n/2}]
        \bigl\{1+(\log(n/2))^{-1/8}\bigr\}
        \le K\mathbb E[X_n].
\]
Let
\[
        \mathcal G_n
        =
        \left\{
        X[0,n/2]\vee X[n/2,n]
        \le K\mathbb E[X_n]
        \right\}.
\]
For $X=D$, \cite[Lemma~3.4]{ShiraishiWatanabe2026}, applied with
$\varepsilon=1/8$ to the two half-walks, gives
\[
        \mathbb P(\mathcal G_n^c)
        \le C(\log n)^{-3}.
\]
The same estimate holds for $X=R$ with the aid of \eqref{Eq2}, since effective resistance is bounded
above by graph distance.  Thus, in either case,
\[
        \mathbb P(\mathcal G_n^c)
        \le C(\log n)^{-3}.
\]

On $\mathcal G_n$,
\[
        (\mathcal E_n)^2
        \le
        \bigl\{X[0,n/2]+X[n/2,n]\bigr\}\mathcal E_n
        \le 2K\mathbb E[X_n]\mathcal E_n.
\]
Consequently, by \eqref{Eq2} and \eqref{Eq8},
\[
\begin{aligned}
        \mathbb E[(\mathcal E_n)^2;\mathcal G_n]
        \le 2K\mathbb E[X_n]\mathbb E[\mathcal E_n]
        \le Cn^2(\log n)^{-2}.
\end{aligned}
\]
On the complementary event,
\[
        \mathbb E[(\mathcal E_n)^2;\mathcal G_n^c]
        \le n^2\mathbb P(\mathcal G_n^c)
        \le Cn^2(\log n)^{-3}.
\]
Combining the last two estimates proves the lemma.  Enlarging $C$ if
necessary takes care of the finitely many remaining values of $n$.
\end{proof}

The next lemma provides the desired upper bound on the variance.

\begin{lemma}
\label{lem:variance-upper-four}
%Let $d=4$ and let $X_n$ denote either $D_n$ or $R_n$.  
There exists $C<\infty$ such that
\begin{equation}
\label{eq:variance-upper-four}
        \operatorname{Var}(X_n)\le Cn^2(\log n)^{-2}.
\end{equation}
\end{lemma}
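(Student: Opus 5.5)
Iterate the one-scale estimate of Lemma~\ref{lem:one-scale-second-moment} along the dyadic decomposition, in the spirit of subadditive variance bounds. The starting point is the exact identity
\[
        X_n = X[0,n/2]+X[n/2,n]-\mathcal E_n .
\]
Since $X[0,n/2]$ and $X[n/2,n]$ depend on disjoint sets of increments, they are independent, and each has the law of $X_{n/2}$. Writing $v(n)=\operatorname{Var}(X_n)^{1/2}$ and using Minkowski's inequality in $L^2$ for the centered variables,
\[
        v(n)\le \bigl(2v(n/2)^2\bigr)^{1/2}+\|\mathcal E_n-\mathbb E\mathcal E_n\|_2
        \le \sqrt2\, v(n/2)+C n(\log n)^{-1},
\]
where the last term comes from Lemma~\ref{lem:one-scale-second-moment}, since $\|\mathcal E_n-\mathbb E\mathcal E_n\|_2\le\|\mathcal E_n\|_2$.

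Iterating this recursion $k$ times gives
\[
        v(n)\le 2^{k/2}v(n2^{-k})+C\sum_{j=0}^{k-1}2^{j/2}\,\frac{n2^{-j}}{\log(n2^{-j})}.
\]
I would stop at $n2^{-k}\approx n(\log n)^{-A}$ for a fixed large $A$, or simply at $n2^{-k}\ge\sqrt n$. In that range $\log(n2^{-j})\ge\frac12\log n$, so the sum is bounded by
\[
        C n(\log n)^{-1}\sum_j 2^{-j/2}\le C'n(\log n)^{-1}.
\]
The boundary term is controlled with the trivial bound $v(m)\le m$, since $0\le X_m\le m$. With $m=n2^{-k}$ this gives
\[
        2^{k/2}v(m)\le 2^{k/2}n2^{-k}=n2^{-k/2}.
\]
Choosing $k$ so that $2^{-k/2}\le(\log n)^{-1}$, i.e.\ $2^k\approx(\log n)^2$ and $m\approx n(\log n)^{-2}$, this term is at most $n(\log n)^{-1}$. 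In this range $\log m\ge\frac12\log n$ still holds, so the geometric sum bound remains valid.

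Combining the two contributions gives $v(n)\le Cn(\log n)^{-1}$, which is \eqref{eq:variance-upper-four}. The recursion needs Lemma~\ref{lem:one-scale-second-moment} at every intermediate scale $m=n2^{-j}$, which is fine because that lemma holds for all $m$ with a uniform constant. The main point requiring care is non-dyadic $n$: the half-lengths should be taken as $\lfloor m/2\rfloor$ and $\lceil m/2\rceil$, and the lemma should hold for such splits. Changing a split point by $O(1)$ alters $X$ by $O(1)$, which is harmless; otherwise I would restrict to the dyadic sequence and extend by the same monotone-comparison argument. The real content lies in Lemma~\ref{lem:one-scale-second-moment}, which gains the extra factor $(\log n)^{-1/2}$ over the naive bound. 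Here the only obstacle is checking that the geometric factor $\sqrt2$ is beaten by the halving of $n$, which it is: the ratio is $2^{-1/2}<1$.
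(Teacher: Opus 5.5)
Your proposal is correct and is essentially the paper's proof. The paper unrolls the dyadic decomposition in one step, stopping at $2^K\approx(\log n)^4$ terminal blocks. It then applies Minkowski level by level, using the independence of the $\mathcal E_{I_{k,l}}$ within each level, the one-scale second-moment bound, and the trivial bound on the terminal blocks. Iterating your recursion $v(n)\le\sqrt2\,v(n/2)+Cn(\log n)^{-1}$ produces exactly the same sum, and your different stopping scale $2^k\approx(\log n)^2$ is immaterial.
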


\begin{proof}
Let $K=\lfloor4\log_2\log n\rfloor$, $M_0=2^K$, and $h=n/M_0$.  For
$0\le k<K$ and $1\le l\le2^k$, set $I_{k,l}=[(l-1)2^{-k}n,l2^{-k}n]$. 
Iterating the decomposition, 
\begin{equation*}
%\label{eq:binary-decomp-X}
        X_n=
        \sum_{q=1}^{M_0}X[(q-1)h,qh]
        -\sum_{k=0}^{K-1}\sum_{l=1}^{2^k}\mathcal E_{I_{k,l}}, 
\end{equation*}
where $\mathcal E_{I_{k,l}}:=X[(2l-2)2^{-k-1}n,(2l-1)2^{-k-1}n]+X[(2l-1)2^{-k-1}n,(2l)2^{-k-1}n]-X[(l-1)2^{-k}n,l2^{-k}n]$. 
After subtracting expectations and applying the triangle inequality in $L^2$,
\begin{align*}
%\label{eq:Minkowski-X}
\|X_n-\mathbb E[X_n]\|_2
\le
\left\|\sum_{q=1}^{M_0}\bigl(X[(q-1)h,qh]-\mathbb E[X_h]\bigr)\right\|_2 
+
\sum_{k=0}^{K-1}
\left\|\sum_{l=1}^{2^k}\bigl(\mathcal E_{I_{k,l}}-
\mathbb E[\mathcal E_{I_{k,l}}]\bigr)\right\|_2 .
\end{align*}
The terminal blocks are independent and bounded by $h$, so their contribution is at most
$\sqrt{M_0}h\le Cn(\log n)^{-2}$.  For fixed $k$, the random variables
$\mathcal E_{I_{k,l}}$ are independent.  Writing $r_k=2^{-k}n$, Lemma
\ref{lem:one-scale-second-moment} gives
\[
\left\|\sum_{l=1}^{2^k}\bigl(\mathcal E_{I_{k,l}}-
\mathbb E[\mathcal E_{I_{k,l}}]\bigr)\right\|_2^2
\le C2^k r_k^2(\log r_k)^{-2}
\le C 2^{-k} n^2 (\log n)^{-2}.
\]
Hence the $k$th level contributes at most $C2^{-k/2}n(\log n)^{-1}$.  Summing over $k$
gives $\|X_n-\mathbb E[X_n]\|_2\le Cn(\log n)^{-1}$, which proves
\eqref{eq:variance-upper-four}.
\end{proof}

With these variance estimates in hand, we can now compute the second moment of  $Z_n^\varepsilon$ using equation \eqref{Eq4}. This will later be used to show that the contribution of $Z_n^\varepsilon$ is negligible.

\begin{cor}
\label{cor:Zn-small-four} 
Fix $\varepsilon\in(0,1)$. 
There exists $C<\infty$, independent of $\varepsilon$, such that for all $n$,
\begin{equation}
\label{Eq7}
        \mathbb E\bigl[(Z_n^{\varepsilon})^2\bigr]\le C\varepsilon.
\end{equation}
\end{cor}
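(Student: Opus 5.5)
Since $Z_n^\varepsilon$ is the centered sum of the block costs $\hat X_i=X[t_{i-1},t_i]$, $1\le i\le N$, and the blocks $I_i$ are disjoint time intervals, the plan is to exploit independence of the increments of $S$. The quantity $\hat X_i$ depends only on the increments $S_{m+1}-S_m$, $t_{i-1}\le m<t_i$, because graph distance and effective resistance are translation invariant. Hence $\hat X_1,\dots,\hat X_N$ are i.i.d., each with the law of $X_{a_n}$, where $a_n=\varepsilon n(\log n)^{-1}$. Consequently
\[
\mathbb E\bigl[(Z_n^\varepsilon)^2\bigr]=\frac{N\operatorname{Var}(X_{a_n})}{n^2(\log n)^{-3}} .
\]

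Next I would apply Lemma~\ref{lem:variance-upper-four} at time $a_n$. This gives $\operatorname{Var}(X_{a_n})\le C a_n^2(\log a_n)^{-2}$ with $C$ the absolute constant of that lemma, which does not depend on $\varepsilon$. Since $\log a_n=\log n-\log\log n+\log\varepsilon$, we have $\log a_n\ge \tfrac12\log n$ once $n$ is large enough, the threshold depending on $\varepsilon$. Plugging in $N=\varepsilon^{-1}\log n$ then yields
\[
\mathbb E\bigl[(Z_n^\varepsilon)^2\bigr]\le \frac{\varepsilon^{-1}\log n\cdot 4C\varepsilon^2 n^2(\log n)^{-2}(\log n)^{-2}}{n^2(\log n)^{-3}}=4C\varepsilon .
\]

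The only delicate point is uniformity: the statement asks for a constant independent of $\varepsilon$ and valid for all $n$. This is handled as follows.
\begin{itemize}
\item For $n$ below an $\varepsilon$-dependent threshold, or when $a_n$ is bounded, we cannot use $\log a_n\ge\frac12\log n$.
\item In that range I would instead use the crude bound $\operatorname{Var}(X_{a_n})\le a_n^2$, which holds since $0\le X_{a_n}\le a_n$.
\item Alternatively, one can note that the claim is only needed for large $n$, with the statement read as holding for all sufficiently large $n$, in keeping with the convention in Section~\ref{subsec:notation}.
\end{itemize}
One should also check that the variance lemma applies with a single constant for all $m\ge 2$. This holds after enlarging $C$ to cover small $m$, since $\operatorname{Var}(X_m)\le m^2$ always. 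With that, the bound $C a_n^2(\log a_n)^{-2}$ is uniform, and the only remaining issue is the ratio $\log n/\log a_n$, which is bounded by $2$ for large $n$.
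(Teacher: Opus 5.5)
Your main argument is the same as the paper's. The $\hat X_i$ are i.i.d.\ copies of $X_{a_n}$, Lemma~\ref{lem:variance-upper-four} is applied at scale $a_n$, and then $(\log a_n)^{-2}\le 4(\log n)^{-2}$ is used. As in the paper, that last step holds only for $n\ge n_0(\varepsilon)$.

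Your first bullet does not remove this restriction. The crude bound $\operatorname{Var}(X_{a_n})\le a_n^2$ gives only $\mathbb E[(Z_n^\varepsilon)^2]\le\varepsilon(\log n)^2$. Just below the threshold, where $\log n\approx 2\log(1/\varepsilon)$, this is of order $\varepsilon\log^2(1/\varepsilon)$, not $O(\varepsilon)$.

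In fact, the literal statement for all $n$ with $C$ independent of $\varepsilon$ is false. Take $\varepsilon=2\log n/n$, so that $a_n=2$; then $\mathbb E[(Z_n^\varepsilon)^2]/\varepsilon=\tfrac{7}{64}(\log n)^2$. So the large-$n$ reading in your second bullet is the correct one. It is also all the paper needs later, since $n\to\infty$ is taken with $\varepsilon$ fixed.
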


\begin{proof}
Since $\hat{X}_1,\ldots,\hat{X}_N$ are independent and identically
distributed,
\[
        \operatorname{Var}\left(\sum_{i=1}^N\hat{X}_i\right)
        =N\operatorname{Var}(\hat{X}_1).
\]
Applying Lemma~\ref{lem:variance-upper-four} to an interval of length
$\varepsilon n(\log n)^{-1}$ gives
$\operatorname{Var}(\hat{X}_1)\le C\varepsilon^2n^2(\log n)^{-4}$.
Since $N=\varepsilon^{-1}\log n$, dividing by $n^2(\log n)^{-3}$ proves
\eqref{Eq7}.
\end{proof}

\section{Proof of the weak coupling theorem in dimension four}
\label{sec:weak-coupling-four}

In this section, we prove Theorem~\ref{thm:intro-coupling} in the case $d=4$.
Only the estimates collected in Section~\ref{sec:preliminaries} are used.  The
proof is intentionally written in a weak form: for each fixed coarse-graining
parameter, we first truncate the edge lengths at a fixed level $R$, let
$n\to\infty$, and only afterwards let $R\to\infty$.  This order of limits avoids
the delicate problem of decoupling a global good event from prescribed
intersection events.

We now give the precise four-dimensional definitions of the two graphs appearing in Theorem~\ref{thm:intro-coupling}.  Fix $\vartheta\in(0,1)$.
Suppressing integer parts, set
\[
        N_\vartheta:=\vartheta^{-1}\log n,
        \qquad a_\vartheta:=\vartheta n(\log n)^{-1},
\]
and divide $[0,n]$ into the intervals
\[
        I_i^\vartheta=[(i-1)a_\vartheta,ia_\vartheta],
        \qquad 1\le i\le N_\vartheta .
\]
Let
\[
        V_\vartheta:=\{1,2,\ldots,N_\vartheta\}
\]
be the common vertex set.  We also set
\[
        \mathcal P_\vartheta:=
        \{e=(i,j):1\le i+2\le j\le N_\vartheta\},
\]
the set of possible non-nearest-neighbour edges.  For
$e=(i,j)\in\mathcal P_\vartheta$, write
\[
        \ell(e)=j-i,
        \qquad {\rm span}(e)=[i,j],
        \qquad E_e=\{I_i^\vartheta\leftrightarrow I_j^\vartheta\},
\]
where $I_i^\vartheta\leftrightarrow I_j^\vartheta$ means
$S(I_i^\vartheta)\cap S(I_j^\vartheta)\ne\emptyset$. 
The coarse-grained self-intersection graph is defined by
\begin{equation}
\label{eq:def-G1-four}
\begin{aligned}
        G_1:=(V_\vartheta,E_1^\vartheta), \quad 
        E_1^\vartheta
        :=
        \bigl\{(i,i+1):1\le i<N_\vartheta\bigr\} 
      \cup
        \bigl\{(i,j)\in\mathcal P_\vartheta : 
         E_{(i,j)} \text{ occurs}\bigr\}.
\end{aligned}
\end{equation}
Thus nearest-neighbour vertices are always connected in $G_1$,
whereas non-nearest-neighbour vertices are connected exactly when the
corresponding random walk blocks intersect.

Let $(\eta_e)_{e\in\mathcal P_\vartheta}$ be independent Bernoulli random
variables under an auxiliary probability measure $\mathbb P_\Gamma$, with
\begin{equation}
\label{eq:def-p-four}
        \mathbb P_\Gamma(\eta_e=1)=p_e=p_{\ell(e)},
        \qquad
        p_r:=\frac{1}{2\log n}
        \log\left(1+\frac{1}{r^2-1}\right),
        \qquad r\ge2.
\end{equation}
We write
\[
        \Gamma_\vartheta:=\{e\in\mathcal P_\vartheta:\eta_e=1\}
\]
for the long-edge set of the long range percolation graph.  The corresponding
long range percolation graph is defined by
\begin{equation}
\label{eq:def-G2-four}
\begin{aligned}
        G_2=(V_\vartheta,E_2^\vartheta),\quad 
        E_2^\vartheta
        :=
        \bigl\{(i,i+1):1\le i<N_\vartheta\bigr\} 
        \cup
        \Gamma_\vartheta.
\end{aligned}
\end{equation}
Finally, define the long-edge set of the coarse-grained intersection graph by
\[
        \mathcal E_\vartheta
        :=
        \{e\in\mathcal P_\vartheta:E_e\text{ occurs}\}.
\]
Since nearest-neighbour edges are present deterministically in both graphs,
\[
        G_1=G_2
        \quad\Longleftrightarrow\quad
        \mathcal E_\vartheta=\Gamma_\vartheta .
\]
%For fixed $n$ and $\vartheta$, we often write simply $G_1$ and $G_2$ for $G_1$ and $G_2$, respectively. 
We put
\[
        q_e:=\mathbb P(E_e),\qquad e\in\mathcal P_\vartheta .
\]

The following elementary estimates are the only one-edge inputs used below.

\begin{lemma}
\label{lem:weak-one-edge-estimates}
For every fixed $R\ge2$, as $n\to \infty$, 
\begin{equation}
\label{eq:weak-q-p-fixed-R}
        \max_{e\in\mathcal P_\vartheta,\,\ell(e)\le R}
        \left|\frac{q_e}{p_e}-1\right|\longrightarrow0 .
\end{equation}
Moreover,
\begin{equation}
\label{eq:weak-total-weight}
        \sum_{e\in\mathcal P_\vartheta}q_e+
        \sum_{e\in\mathcal P_\vartheta}p_e\le C_\vartheta,
\end{equation}
and, for all $R\ge2$,
\begin{equation}
\label{eq:weak-long-edge-tail}
        \sum_{\substack{e\in\mathcal P_\vartheta\\ \ell(e)>R}}q_e+
        \sum_{\substack{e\in\mathcal P_\vartheta\\ \ell(e)>R}}p_e
        \le \frac{C_\vartheta}{R}+o_n(1).
\end{equation}
\end{lemma}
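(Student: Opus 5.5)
The plan is to reduce everything to the sharp two-block estimate of Proposition~\ref{prop:sharp-lri-4d}, in the form \eqref{Eq14} and \eqref{eq:block-intersection-upper}, applied with block length $a_\vartheta=\vartheta n(\log n)^{-1}$ in place of $\varepsilon n(\log n)^{-1}$.

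\emph{Step 1: the one-edge asymptotics \eqref{eq:weak-q-p-fixed-R}.} Fix $e=(i,j)\in\mathcal P_\vartheta$ with $r=\ell(e)\in[2,R]$. By the Markov property and time reversal of the block $I_i^\vartheta$, the event $E_e$ has the law of $\{S^1[0,a_\vartheta]\cap S^2[ka_\vartheta,(k+1)a_\vartheta]\neq\emptyset\}$ with $k=r-1\ge1$. Here $S^1$ is the reversed walk of block $i$, and $S^2$ is the walk run forward from $S_{t_i}$ through the $k-1$ intermediate blocks and then block $j$.

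Proposition~\ref{prop:sharp-lri-4d} with $n$ replaced by $a_\vartheta$ gives
\[
q_e=\frac{1}{2\log a_\vartheta}\log\Bigl(1+\frac{1}{k^2+2k}\Bigr)\bigl\{1+O\bigl(k^{-1}(\log a_\vartheta)^{-3/25}\bigr)\bigr\}.
\]
We have $k^2+2k=r^2-1$, and $\log a_\vartheta=\log n-\log\log n+\log\vartheta=(1+o(1))\log n$. Since $k\le R$ is bounded, the ratio $q_e/p_e$ tends to $1$ uniformly over the finitely many lengths $r\le R$, which proves \eqref{eq:weak-q-p-fixed-R}.

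\emph{Step 2: bounds on $p$.} For $r\ge2$,
\[
\log\Bigl(1+\frac{1}{r^2-1}\Bigr)\le\frac{1}{r^2-1}\le\frac{2}{r^2}, \qquad\text{so}\qquad p_r\le\frac{1}{r^2\log n}.
\]
The number of edges of length $r$ is at most $N_\vartheta=\vartheta^{-1}\log n$. Hence
\[
\sum_{e\in\mathcal P_\vartheta}p_e\le\vartheta^{-1}\sum_{r\ge2}r^{-2}\le C_\vartheta,
\qquad
\sum_{\ell(e)>R}p_e\le\vartheta^{-1}\sum_{r>R}r^{-2}\le\frac{C_\vartheta}{R}.
\]

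\emph{Step 3: bounds on $q$, the main obstacle.} The same counting works for $q$ provided the uniform bound $q_e\le C/(\ell(e)^2\log n)$ of \eqref{eq:block-intersection-upper} holds for all lengths up to $N_\vartheta\asymp\log n$. This is where I expect care to be needed. Proposition~\ref{prop:sharp-lri-4d} is stated ``uniformly in the range of $k$ used in this paper'', and since its error term $O(1/(k(\log n)^{3/25}))$ is $O(1)$ for every $k\ge1$, it should give $q_e\le C\,p_{\ell(e)}$ for all $k\le\vartheta^{-1}\log n$.

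If one prefers not to rely on uniformity at $k\asymp\log n$, I would give a direct first-moment bound. The expected number of intersection pairs between the two blocks is
\[
\sum_{s\le a_\vartheta,\ t\in[ka_\vartheta,(k+1)a_\vartheta]}p_{s+t}(0)\asymp a_\vartheta\cdot a_\vartheta\,(ka_\vartheta)^{-2}=k^{-2}.
\]
To turn this into a probability, I would divide by the expected local intersection count given a hit, which is of order $\log a_\vartheta$ in $d=4$. This is the standard last-exit decomposition, and it yields $q_e\le C/(k^2\log n)$. Only an upper bound up to a constant is needed here, so the crude argument suffices.

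Summing over the at most $\vartheta^{-1}\log n$ edges of each length then gives \eqref{eq:weak-total-weight} and the $q$-part of \eqref{eq:weak-long-edge-tail}. The $o_n(1)$ term absorbs any bounded number of short lengths near $R$ where only the asymptotic form is used.
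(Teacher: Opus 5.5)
Your proposal is correct and follows essentially the same route as the paper. Proposition~\ref{prop:sharp-lri-4d} at block scale $a_\vartheta$, with $k=\ell(e)-1$, $k^2+2k=\ell(e)^2-1$ and $\log a_\vartheta\sim\log n$, gives \eqref{eq:weak-q-p-fixed-R}; the uniform bound \eqref{eq:block-intersection-upper} together with the count of at most $N_\vartheta$ edges of each length then gives \eqref{eq:weak-total-weight} and \eqref{eq:weak-long-edge-tail}. Your fallback first-moment/last-exit sketch is not needed, since the paper simply takes \eqref{eq:block-intersection-upper} as holding uniformly in the relevant range. As written, that sketch also glosses over its only nontrivial step: the lower bound on the conditional number of intersections given a hit, which is what Lawler's long-range intersection estimates provide.
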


\begin{proof}
The asymptotic formula \eqref{eq:weak-q-p-fixed-R} is Proposition~\ref{prop:sharp-lri-4d},
applied at the block scale $a_\vartheta=\vartheta n(\log n)^{-1}$; since
$\log a_\vartheta\sim\log n$, replacing $\log a_\vartheta$ by $\log n$ changes
the leading term by a relative $o(1)$ factor.  The upper bound
\eqref{eq:block-intersection-upper} gives
\[
        q_e\le \frac{C}{\ell(e)^2\log n},
        \qquad p_e\le \frac{C}{\ell(e)^2\log n}.
\]
There are at most $N_\vartheta$ possible edges of any fixed length $r$.  Hence
\[
        \sum_{e\in\mathcal P_\vartheta}(q_e+p_e)
        \le \frac{C N_\vartheta}{\log n}\sum_{r\ge2}r^{-2}
        \le C_\vartheta,
\]
and the same computation with $r>R$ gives \eqref{eq:weak-long-edge-tail}.
\end{proof}

The next lemma is the only place where two dependent intersections are
estimated.  It is stated for a fixed cutoff $R$ only.  This is important: all
constants may depend on $R$ and $\vartheta$, but $R$ is kept fixed while
$n\to\infty$.

\begin{lemma}
\label{lem:weak-overlap-sum}
For each fixed $R\ge2$, as $n\to \infty$, 
\begin{equation}
\label{eq:weak-overlap-sum-rw}
        %\Omega_{n,R}:=
        \sum_{\substack{e,f\in\mathcal P_\vartheta:\,e\ne f,\,
        \ell(e),\ell(f)\le R,\\
        {\rm span}(e)\cap{\rm span}(f)\ne\emptyset}}
        \mathbb P(E_e\cap E_f)
        \longrightarrow0 .
\end{equation}
The corresponding product weight also tends to zero as $n\to \infty$: 
\begin{equation}
\label{eq:weak-overlap-sum-product}
        \sum_{\substack{e,f\in\mathcal P_\vartheta:\,e\ne f,\,
        \ell(e),\ell(f)\le R,\\
        {\rm span}(e)\cap{\rm span}(f)\ne\emptyset}}
        q_eq_f
        \longrightarrow0 .
\end{equation}
\end{lemma}

\begin{proof}
The product estimate \eqref{eq:weak-overlap-sum-product} is immediate.  Each
edge of length at most $R$ has at most $C_R$ overlapping neighbors of length at
most $R$, and $q_e\le C_R(\log n)^{-1}$.  Therefore
\[
        \sum_{e}\sum_{\substack{f:{\rm span}(f)\cap{\rm span}(e)\ne\emptyset,\\ \ell(f)\le R}}q_eq_f
        \le C_R(\log n)^{-1}\sum_e q_e=o(1)
\]
by \eqref{eq:weak-total-weight}.
The proof of equation \eqref{eq:weak-overlap-sum-rw} will be discussed briefly due to its similarity with the proof of Lemma \ref{lem:non-overlap}. We encourage the reader to read the proof there for more details, while we summarize the basic strategy. If one fixes two edges $(j,k)$ and $(j',k')$ that overlap each other and $k' >k$, then by the freezing lemma (as applied in Lemma \ref{lem:non-overlap}) the probability that the edge $(j',k')$ exists conditional on $(j,k)$ existing is bounded by $ C\frac{(\log \log n)^C}{\log n}$ times the probability of appearance of the edge $(j,k)$ once one excludes an event that has probability less than $(\log n)^{-p}$ of occurring, where $p$ can be made arbitrarily large at the price of increasing the constant $C$. That is, we obtain
\begin{align*}
    \mathbb{P}(E_e \cap E_f) 
    \le C\frac{(\log \log n)^C}{(\log n)^2}+ C(\log n)^{-p}.
\end{align*}
Notice, that for a fixed edge $(j,k)$ of length less than $R$, the number of other edges $(j',k')$ of length less than $R$ that will overlap $(j,k)$ is less than $R^2$. Thus, for a fixed edge $e$, we see that
$$
\sum_{\substack{f \in \mathcal{P}_{\theta}, \ell(f)\le R\\ \text{span}(e) \cap \text{span}(f) \ne \emptyset} } \mathbb{P}(E_e \cap E_f) \le C R^3 \frac{ (\log \log n)^{C_0}}{(\log n)^2}. 
$$

In addition, we remark that there are at most $\theta^{-1}R \log n$ edges in $\mathcal{P}_\theta$ of length less than $R$. Thus, if we sum the previous equation over all edges $e$, we see that the right-hand side will be less than $C R^3 \frac{(\log \log n)^{C_0}}{\log n}$, which will go to $0$ as $n \to \infty$.  
\end{proof}

We next compare the truncated edge process with an independent Bernoulli field
having the same one-edge marginals.  This is just a finite-cutoff
inclusion--exclusion argument.  Notice that no global good event is inserted.

\begin{lemma}
\label{lem:fixed-cutoff-product-approx}
For every fixed $R\ge2$, let
\[
        \mathcal P_{\vartheta,R}:=
        \{e\in\mathcal P_\vartheta:2\le\ell(e)\le R\},
        \qquad
        \mathcal E_{\vartheta,R}:=\mathcal E_\vartheta\cap\mathcal P_{\vartheta,R}.
\]
Let $\Gamma_{\vartheta,R}^{q}$ be the independent Bernoulli edge set on
$\mathcal P_{\vartheta,R}$ with edge probabilities $(q_e)_{e\in\mathcal P_{\vartheta,R}}$.
Then
\begin{equation}
\label{eq:fixed-cutoff-q-tv}
        d_{\rm TV}\bigl(\mathcal L(\mathcal E_{\vartheta,R}),
        \mathcal L(\Gamma_{\vartheta,R}^{q})\bigr)\longrightarrow0,
\end{equation}
where $\mathcal L_Y$ is the law with respect to $\mathbb P_Y$.
\end{lemma}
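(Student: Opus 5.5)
Throughout, $R$ is fixed and the limit is $n\to\infty$. The plan is to compare the two laws configuration by configuration. Concretely, we show that both laws put mass $1-o(1)$ on the set of non-overlapping configurations, and that on this set the point probabilities agree up to a factor $1+o(1)$, uniformly enough to be summed.

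\emph{Reduction to non-overlapping configurations.} Call a finite set $A\subset\mathcal P_{\vartheta,R}$ non-overlapping if ${\rm span}(e)\cap{\rm span}(f)=\emptyset$ for all distinct $e,f\in A$. By a union bound, Lemma~\ref{lem:weak-overlap-sum} shows that $\mathcal E_{\vartheta,R}$ is non-overlapping with probability $1-o(1)$: \eqref{eq:weak-overlap-sum-rw} handles the walk and \eqref{eq:weak-overlap-sum-product} handles $\Gamma^q_{\vartheta,R}$. By Markov's inequality and \eqref{eq:weak-total-weight}, both sets have size at most $M$ with probability at least $1-C_\vartheta/M$. It therefore suffices to show that
\[
\sum_{A\ \text{non-overlapping},\,|A|\le M}\bigl|\mathbb P(\mathcal E_{\vartheta,R}=A)-\mathbb P(\Gamma^q_{\vartheta,R}=A)\bigr|\to0
\]
for each fixed $M$, and then let $M\to\infty$.

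\emph{Computing $\mathbb P(\mathcal E_{\vartheta,R}=A)$.} Fix a non-overlapping $A$. By inclusion--exclusion on the complement events,
\[
\mathbb P(\mathcal E_{\vartheta,R}=A)=\sum_{B\subset\mathcal P_{\vartheta,R}\setminus A}(-1)^{|B|}\,\mathbb P\Bigl(\bigcap_{e\in A\cup B}E_e\Bigr).
\]
We truncate via Bonferroni inequalities at $|B|\le L$ and split according to whether $A\cup B$ is non-overlapping.
\begin{enumerate}[(i)]
\item If $A\cup B$ is non-overlapping, the events $E_e$ depend on disjoint time windows of the walk. The increments over disjoint windows are independent, and $E_e$ is translation invariant (it depends only on the increments inside ${\rm span}(e)$). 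Hence $\mathbb P(\bigcap_{A\cup B}E_e)=\prod_{A\cup B}q_e$ exactly, as noted in the introduction. This matches the corresponding terms of the same expansion for the independent field.
\item Otherwise some pair in $A\cup B$ overlaps. For the walk we bound the term by $\mathbb P(E_e\cap E_f)\prod q_g$ over the remaining non-overlapping part, which requires a little care when the factorization fails. For the product field the analogous bound is $q_eq_f\prod q_g$. Summed over $A$ and $B$, these give at most $(\sum_g q_g)^{M+L}$ times the overlap sums of Lemma~\ref{lem:weak-overlap-sum}, so they are $C_{\vartheta,M,L}\,o(1)$.
\end{enumerate}
The Bonferroni remainder is bounded by $\sum_{|B|=L+1}\prod_{B}q_e\le C_\vartheta^{L+1}/(L+1)!$, up to the same $o(1)$ overlap corrections, and this is small uniformly in $n$ once $L$ is large. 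We make the same truncation on the product side. Summing over non-overlapping $A$ with $|A|\le M$ and sending $n\to\infty$, then $L\to\infty$, then $M\to\infty$, proves \eqref{eq:fixed-cutoff-q-tv}.

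\emph{Main obstacle.} The hard step is term (ii): bounding $\mathbb P(\bigcap_{A\cup B}E_e)$ when the family contains one overlapping pair together with many other edges. The walk terms do not factor, so I would first try to isolate a maximal non-overlapping subfamily. For this I would use that a connected cluster of overlapping edges of length at most $R$ occupies a window of at most $C_{R,L}$ blocks. The events supported on disjoint clusters factor exactly. On each cluster I would bound the probability by that of a single overlapping pair, which Lemma~\ref{lem:weak-overlap-sum} controls in sum. A cluster containing three or more edges can be dominated by one of its pairs, since dropping events only increases the probability.
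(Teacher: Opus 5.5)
Your proposal is correct, but it takes a different route from the paper. The paper proves this lemma in a few lines with the process version of the Chen--Stein bound of Arratia--Goldstein--Gordon, which bounds the total-variation distance from the dependent indicator field to an independent one by $O(b_1+b_2+b_3)$. It takes the dependency neighbourhood of $e$ to be the edges whose spans meet ${\rm span}(e)$. Then:
\begin{itemize}
\item $b_3=0$, because events with disjoint spans are exactly independent;
\item $b_1$ is the product sum \eqref{eq:weak-overlap-sum-product} plus $\sum_e q_e^2$;
\item $b_2$ is the walk sum \eqref{eq:weak-overlap-sum-rw}.
\end{itemize}
You use the same three inputs: exact factorisation over disjoint spans, the two overlap sums of Lemma~\ref{lem:weak-overlap-sum}, and the total-weight bound \eqref{eq:weak-total-weight}. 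In place of the Stein machinery you use explicit inclusion--exclusion, Bonferroni truncation, and a decomposition into clusters of overlapping edges. Your cluster step works: connected clusters occupy disjoint index windows and so factor exactly. A non-singleton cluster is dominated by one of its overlapping pairs, and only $C_{R,M,L}$ clusters contain a given pair. The constant depends on $M$ as well as $L$, since a cluster can have up to $M+L+1$ edges.

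What each route buys: yours is self-contained and compares directly with the Bernoulli field. It costs two auxiliary truncations ($M$ and $L$) and cluster-counting constants, and it gives only a qualitative $o(1)$. The Chen--Stein route gives the explicit bound $O(b_1+b_2)$, which is what a convergence rate (as in the remark after Theorem~\ref{thm:intro-coupling}) would require.

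One step needs to be stated more carefully. For fixed $A$, the Bonferroni remainder is $\sum_{|B|=L+1}\mathbb P(E_A\cap E_B)$, not $\sum_{|B|=L+1}\prod_B q_e$. You must keep the weight coming from $A$ when you sum over $A$. There are $(\log n)^{O(M)}$ admissible non-overlapping $A$ with $|A|\le M$, so summing a per-$A$ bound of $C_\vartheta^{L+1}/(L+1)!$ would diverge. Summed jointly over non-overlapping $A\cup B$, the remainder is at most
\[
\sum_{a\le M}\frac{C_\vartheta^{a}}{a!}\cdot\frac{C_\vartheta^{L+1}}{(L+1)!}\le e^{C_\vartheta}\frac{C_\vartheta^{L+1}}{(L+1)!}.
\]
The overlapping part is $o(1)$ by your cluster argument, so the conclusion stands.
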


\begin{proof}
We apply \cite[Theorem 3]{Arratiaetal1990} to bound the total variation distance.  We first note that the appearance of an edge $(j,k)$ is independent of the appearance of any edge $(j',k')$ such that $[u_{j-1}, u_k] \cap [u_{j'-1}, u_{k'}] = \emptyset$. Thus, using the notation of \cite{Arratiaetal1990}, we set $B_{(j,k)}$ to be those edges $(j',k')$ that have non-trivial overlap with $(j,k)$. Then, we see that the term $b_3$ in \cite[Theorem 3]{Arratiaetal1990} is 0 and that $b_1$ corresponds to the left-hand side of equation \eqref{eq:weak-overlap-sum-product} and $\sum_{e} q_e^2$, which can be estimated using Lemma \ref{lem:weak-one-edge-estimates}, and that $b_2$ corresponds to equation \eqref{eq:weak-overlap-sum-rw}. 
%Notice also that the term. 
Since both of these terms go to 0, the result of \cite{Arratiaetal1990} will give us our desired bound.  
\end{proof}

\begin{proposition}
\label{prop:weak-coupling-four}
For every fixed $\vartheta\in(0,1)$,
\begin{equation}
\label{eq:weak-coupling-four-tv}
        d_{\rm TV}\bigl(\mathcal L(\mathcal E_\vartheta),
        \mathcal L_\Gamma(\Gamma_\vartheta)\bigr)\longrightarrow0,
        \qquad n\to\infty .
\end{equation}
Consequently, $\mathcal E_\vartheta$ and $\Gamma_\vartheta$ can be constructed on one
probability space so that
\[
        \mathbb P(\mathcal E_\vartheta\ne\Gamma_\vartheta)\longrightarrow0. 
\]
\end{proposition}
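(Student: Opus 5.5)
The plan is to prove \eqref{eq:weak-coupling-four-tv} by a three-step triangle inequality for total variation, with a fixed cutoff $R$. We then let $n\to\infty$ first and $R\to\infty$ afterwards. The second claim then follows from the maximal coupling theorem: two laws on the finite set of subsets of $\mathcal P_\vartheta$ can always be coupled so that the probability of disagreement equals their total variation distance. Once that is available, \eqref{eq:weak-coupling-four-tv} gives $\mathbb P(\mathcal E_\vartheta\ne\Gamma_\vartheta)\to0$.

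\textbf{Step 1 (truncation).} Write $\Gamma_{\vartheta,R}:=\Gamma_\vartheta\cap\mathcal P_{\vartheta,R}$. The map $A\mapsto A\cap\mathcal P_{\vartheta,R}$ only fails to be the identity on sets containing a long edge, so
\[
d_{\rm TV}\bigl(\mathcal L(\mathcal E_\vartheta),\mathcal L(\mathcal E_{\vartheta,R})\bigr)\le\mathbb P\bigl(\exists e\in\mathcal E_\vartheta,\ \ell(e)>R\bigr)\le\sum_{\ell(e)>R}q_e .
\]
The same bound holds for $\Gamma_\vartheta$ versus $\Gamma_{\vartheta,R}$ with $p_e$ in place of $q_e$. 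By \eqref{eq:weak-long-edge-tail} and a union bound, both are at most $C_\vartheta/R+o_n(1)$. To compare the full sets through their truncations, I use
\[
d_{\rm TV}(\mathcal E_\vartheta,\Gamma_\vartheta)\le d_{\rm TV}(\mathcal E_\vartheta,\mathcal E_{\vartheta,R})+d_{\rm TV}(\mathcal E_{\vartheta,R},\Gamma_{\vartheta,R})+d_{\rm TV}(\Gamma_{\vartheta,R},\Gamma_\vartheta).
\]

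\textbf{Step 2 (dependence removal).} Lemma~\ref{lem:fixed-cutoff-product-approx} gives directly
\[
d_{\rm TV}\bigl(\mathcal L(\mathcal E_{\vartheta,R}),\mathcal L(\Gamma^q_{\vartheta,R})\bigr)\to0
\]
for each fixed $R$.

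\textbf{Step 3 (change of marginals).} I compare $\Gamma^q_{\vartheta,R}$ with $\Gamma_{\vartheta,R}$. Both are product Bernoulli measures on $\mathcal P_{\vartheta,R}$, so coupling coordinatewise gives
\[
d_{\rm TV}\le\sum_{e\in\mathcal P_{\vartheta,R}}|q_e-p_e|\le\max_{\ell(e)\le R}\Bigl|\frac{q_e}{p_e}-1\Bigr|\sum_e p_e .
\]
This is $o(1)\cdot C_\vartheta$ by \eqref{eq:weak-q-p-fixed-R} and \eqref{eq:weak-total-weight}.

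Combining the three steps gives $\limsup_{n}d_{\rm TV}\le 2C_\vartheta/R$ for every $R$. The left side does not depend on $R$, so letting $R\to\infty$ yields \eqref{eq:weak-coupling-four-tv}.

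The only genuinely delicate input is Step 2, namely the overlap estimate \eqref{eq:weak-overlap-sum-rw} feeding the Arratia--Goldstein--Gordon bound, and that is already supplied by the preceding lemmas. Within this proof, the point requiring care is the order of limits. Step 3 uses the uniform ratio estimate only for $\ell(e)\le R$, and ratio control for long edges is not available uniformly. This is exactly why the truncation in Step 1 must be performed before $n\to\infty$, with the tail bound \eqref{eq:weak-long-edge-tail} absorbing the long edges.
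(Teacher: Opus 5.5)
Your proposal is correct and follows the paper's proof essentially step for step. Both arguments compare at a fixed cutoff via Lemma~\ref{lem:fixed-cutoff-product-approx}, replace $q_e$ by $p_e$ through the coordinatewise product-Bernoulli bound $\sum_{e\in\mathcal P_{\vartheta,R}}|q_e-p_e|$, remove the cutoff with \eqref{eq:weak-long-edge-tail} (taking $n\to\infty$ before $R\to\infty$), and obtain the coupling from the maximal coupling theorem. Your $2C_\vartheta/R$ is slightly looser than the paper's $C_\vartheta/R$, because \eqref{eq:weak-long-edge-tail} already controls both tails together, but this changes nothing.
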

%Consistently with Subsection~\ref{subsec:notation}, we write 
\begin{proof}
Fix $R\ge2$.  Lemma~\ref{lem:fixed-cutoff-product-approx} compares
$\mathcal E_{\vartheta,R}$ with the independent Bernoulli field having the exact marginals
$q_e$.  It remains to replace $q_e$ by the long range percolation probabilities $p_e$.  The
standard coupling of product Bernoulli measures gives
\begin{equation}
\label{eq:replace-q-by-p-new-section}
        d_{\rm TV}\bigl(\mathcal L(\Gamma_{\vartheta,R}^{q}),
        \mathcal L(\Gamma_{\vartheta,R})\bigr)
        \le \sum_{e\in\mathcal P_{\vartheta,R}}|q_e-p_e|.
\end{equation}
By \eqref{eq:weak-q-p-fixed-R} and \eqref{eq:weak-total-weight}, the right-hand side of
\eqref{eq:replace-q-by-p-new-section} tends to zero for fixed $R$.  Therefore, fixing $R$, as $n\to \infty$, 
\begin{equation*}
%\label{eq:fixed-R-p-tv-new-section}
        d_{\rm TV}\bigl(\mathcal L(\mathcal E_{\vartheta,R}),
        \mathcal L_\Gamma(\Gamma_{\vartheta,R})\bigr)\longrightarrow0.
\end{equation*}
Finally, remove the cutoff.  By \eqref{eq:weak-long-edge-tail},
\[
\begin{aligned}
        d_{\rm TV}\bigl(\mathcal L(\mathcal E_\vartheta),
        \mathcal L_\Gamma(\Gamma_\vartheta)\bigr)
        &\le
        \mathbb P(\mathcal E_\vartheta\ne\mathcal E_{\vartheta,R})
        +\mathbb P_\Gamma(\Gamma_\vartheta\ne\Gamma_{\vartheta,R}) 
        +d_{\rm TV}\bigl(\mathcal L(\mathcal E_{\vartheta,R}),
        \mathcal L_\Gamma(\Gamma_{\vartheta,R})\bigr) \\
        &\le \frac{C_\vartheta}{R}+o_n(1),
\end{aligned}
\]
where $o_n(1)$ is taken with $R$ fixed.  Letting $n\to\infty$ and then $R\to\infty$ proves
\eqref{eq:weak-coupling-four-tv}.  The coupling statement follows from the maximal-coupling
theorem.
\end{proof}

\begin{proof}[Proof of Theorem~\ref{thm:intro-coupling} for $d=4$]
Take $\vartheta=\varepsilon$ in Proposition~\ref{prop:weak-coupling-four}.  The deterministic
nearest-neighbour edges are present in both graphs, so equality of the long-edge sets is
exactly equality of the full graphs.  Hence the maximal coupling in the proposition gives
\[
        \mathbb P(G_1\ne G_2)
        =\mathbb P(\mathcal E_\varepsilon\ne\Gamma_\varepsilon)
        \longrightarrow0.
\]
This proves Theorem~\ref{thm:intro-coupling} in dimension four.
\end{proof}

\section{Connecting graph distance to long range percolation}
\label{sec:four-dimensional-lrp}

Throughout this section we work in dimension $d=4$.  The main purpose of the section is to prove the four-dimensional part of Theorem~\ref{thm:intro-stable}.

The main object to understand is the shortcut loss
\[
        \sum_{i=1}^{N_\varepsilon}\hat{X}_i-X_n .
\]

The strategy is to describe this loss in terms of a coarse-grained intersection
graph.  We further subdivide the macro blocks into smaller mesoscopic blocks.
A long edge is placed between two such small blocks if the corresponding pieces
of the random walk path intersect.  At the four-dimensional scale
$n(\log n)^{-1}$, these long range intersections are rare and well separated,
and their probabilities are given by Proposition~\ref{prop:sharp-lri-4d}.  
The coupling theorem from Section~\ref{sec:weak-coupling-four} then allows us
to replace this random walk intersection graph by an independent long range percolation graph.

The section is organized as follows.  Subsection~\ref{subsec:four-dim-notation}
introduces the small-block decomposition and the associated intersection graph.
Subsections~\ref{subsec:structural-lemmas}--\ref{subsec:local-cost-estimates}
prove the geometric estimates needed to control long edges, cut times, and local
endpoint costs.  Subsection~\ref{subsec:shortcut-approximation} shows that the
microscopic shortcut loss is, up to a negligible error, the sum of the typical
costs of the small blocks bypassed by the long edges. 
In Subsection~\ref{subsec:lrp-replacement}, we use the coupling theorem to replace
the random walk edge process by independent long range percolation.
Subsection~\ref{subsec:characteristic-functions-four} computes the limiting
characteristic function of the resulting percolation functional.  Finally,
Subsection~\ref{subsec:proof-thm13-four-dim} combines these ingredients with
the decomposition and estimates from Section~\ref{sec:preliminaries} to prove
the four-dimensional part of Theorem~\ref{thm:intro-stable}, first for graph
distance and then for effective resistance.

%We use $C,c$ for positive constants whose values may change from line to line.
Unless explicitly stated otherwise, constants may depend on the fixed parameters
$\varepsilon$ and $\delta$, but not on $n$.

\subsection{Coarse graining and notation}
\label{subsec:four-dim-notation}

Recall the notation from Subsection~\ref{subsec:block-decomposition-and-inputs}:
\[
   t_i=i\varepsilon n(\log n)^{-1},
   \qquad I_i=[t_{i-1},t_i],
   \qquad N_\varepsilon=\varepsilon^{-1}\log n.
\]
We now introduce a finer subdivision.  Fix $\delta\in(0,1)$ and assume, only to
avoid inessential integer parts, that
\[
   \delta^{-1}\in\mathbb N,
   \qquad M:=\delta^{-1} N_\varepsilon
\]
are integers.  Put
\[
   h=h_{n,\varepsilon,\delta}:=\delta\varepsilon n(\log n)^{-1},
   \qquad u_j=jh,
   \qquad J_j=[u_{j-1},u_j],
   \qquad 1\le j\le M.
\]
Finally, define the small-block cost
\begin{equation*}
%\label{eq:def-small-block-cost-revised}
   \Delta_n:=d_{G_{0,h}}(S_0,S_h).
\end{equation*}
By the first-order estimate~\eqref{Eq2},
\begin{equation}
\label{eq:small-block-cost-revised}
   \mathbb{E}[\Delta_n]
   = b h(\log h)^{-1/2}\{1+O((\log n)^{-\theta})\}
   = b\delta\varepsilon n(\log n)^{-3/2}\{1+O((\log n)^{-\theta})\},
\end{equation}
where $b=b_X$ is the constant in~\eqref{Eq2}. 
Let
\[
   \mathcal P=\bigl\{(j,k):1\le j+2\le k\le M,
   \ \exists i\in\{1,\ldots,N_\varepsilon-1\}\text{ such that }j\le i\delta^{-1}<k\bigr\}.
\]
Thus $(j,k)\in\mathcal P$ means that the span from $J_j$ to $J_k$ crosses at least
one macro endpoint $t_i$.  Define the random long-edge set
\begin{equation*}
%\label{eq:def-EcalRW}
   \mathcal E_{\mathrm{RW}}=\bigl\{(j,k)\in\mathcal P:S(J_j)\cap S(J_k)\ne\emptyset\bigr\}.
\end{equation*}
The coarse graph has vertices $\{1,\ldots,M\}$, deterministic nearest-neighbor
edges, and long edges $\mathcal E_{\mathrm{RW}}$. 
We shall repeatedly use the following consequence of the four-dimensional
intersection estimate~\eqref{Eq14}.  Uniformly for $(j,k)\in\mathcal P$,
\begin{equation}
\label{eq:basic-edge-prob}
   \mathbb{P}((j,k)\in\mathcal E_{\mathrm{RW}})
   \le \frac{C}{(k-j)^2\log n}.
\end{equation}
Moreover, for every $R\ge2$,
\begin{equation}
\label{eq:edge-tail-basic}
   \mathbb{P}\bigl(\exists (j,k)\in\mathcal E_{\mathrm{RW}}:k-j\ge R\bigr)
   \le \frac{C\delta^{-1}\varepsilon^{-1}}{R}+o_n(1).
\end{equation}

\subsection{Structural lemmas on long edges and cut times}
\label{subsec:structural-lemmas}

The first group of lemmas ensures that long edges are geometrically well separated
and that the relevant endpoints can be surrounded by cut times.  These estimates are
standard consequences of the long range intersection bound~\eqref{Eq14}, the
freezing lemma~\ref{lem:freezing-four}, and the cut-time estimates; we give the details needed for
later use. Our first lemma ensures that long range intersections only occur sufficiently away from the endpoints of our micro intervals with high probability. This provides the separation estimates needed to apply the freezing lemma \ref{lem:freezing-four} later.

For $q>1$ define endpoint neighborhoods
\[
   J_j^{\rm L}(q)=[u_{j-1},u_{j-1}+a_n^{(q)}],
   \qquad
   J_j^{\rm R}(q)=[u_j-a_n^{(q)},u_j].
\]

\begin{lemma}
\label{lem:prim}
For every $q>1$ there is $C=C_{\varepsilon,q}<\infty$ such that
\begin{equation}
\label{eq:endpoint-near-revised}
\mathbb{P}\Bigl(\exists (j,k)\in\mathcal E_{\mathrm{RW}}:
S(J_j^{\rm R}(q))\cap S(J_k)\ne\emptyset\Bigr)
\le C\delta^{-1}|\log\delta|(\log n)^{-(q-1)}.
\end{equation}
The same bound holds with $J_j^{\rm R}(q)$ replaced by any of
$J_j^{\rm L}(q)$, $J_k^{\rm L}(q)$, or $J_k^{\rm R}(q)$.
\end{lemma}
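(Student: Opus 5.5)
We reduce the statement to a union bound over pairs $(j,k)\in\mathcal P$ and control each term by a short-segment-versus-long-segment intersection estimate in $\mathbb Z^4$. For fixed $(j,k)$ with $k-j=r\ge2$, the event $S(J_j^{\rm R}(q))\cap S(J_k)\ne\emptyset$ involves a segment of length $a_n^{(q)}=n(\log n)^{-q}$ ending at time $u_j$ and a segment of length $h$ starting at time $u_{k-1}$. The gap between them is $g=(r-1)h$. The inclusion in $\mathcal E_{\rm RW}$ only shrinks the event, so we drop it. By the Markov property and time reversal, the probability equals
\[
\mathbb P\bigl(S^1[0,a_n^{(q)}]\cap (x+S^2[0,h])\ne\emptyset\bigr),\qquad x=S_{u_{k-1}}-S_{u_j},
\]
where $x$ is distributed as a walk at time $g$ and is independent of $S^1,S^2$.

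The key estimate is an expected-hitting bound. We bound the probability by the expected number of pairs $(s,t)$ with $S^1_s=x+S^2_t$, divided by the typical number of such coincidences given one; in $d=4$ the latter normalization is $\asymp\log(\text{length})$. The crude first-moment count gives
\[
\sum_{s\le a_n^{(q)}}\sum_{t\le h}p_{g+s+t}(0)\le C\,a_n^{(q)}\,h\,(g)^{-2}
\asymp \frac{(\log n)^{-q}}{\delta\varepsilon\,(r-1)^2}\cdot\frac{1}{\delta\varepsilon(\log n)^{-1}}\cdot\frac{1}{\log n}\cdots
\]
Stated cleanly, the expected number of intersections is $\le C\,a_n^{(q)} h/g^2\le C(\log n)^{1-q}\,\delta^{-1}\varepsilon^{-1}r^{-2}$ (using $h=\delta\varepsilon n/\log n$). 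The standard last-exit decomposition, as in Lawler's Chapter 3 and 4 bounds behind \eqref{eq:SW-prop23}, gains the factor $(\log n)^{-1}$. If that factor cannot be cleanly extracted, we instead use the plain first moment and absorb the difference into the constant by a choice of $q$. Either route gives a per-pair bound of order $C\,r^{-2}(\log n)^{-q}\,\delta^{-1}\varepsilon^{-1}\cdot(\log n)^{0\text{ or }1}$.

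Summing then works as follows. The number of pairs with $k-j=r$ is at most $M=\delta^{-1}\varepsilon^{-1}\log n$, and only $r\le M$ occurs. Summing $r^{-2}$ yields a constant, but the $|\log\delta|$ in the statement suggests the sharper per-pair form $\log(1+h/(g))$-type behaviour, summed as $\sum_{r\le M}r^{-1}\cdot(\ldots)$. A more honest estimate for a short segment hitting a long one at distance $g$ is of order $(a_n^{(q)}/g)\cdot\log$-corrections rather than $a_n^{(q)}h/g^2$, since $h\gtrsim g$ when $r$ is small. Summing $1/(r-1)$ over $r$ up to a cutoff where the two bounds cross (around $r\sim\delta^{-1}$) produces the $|\log\delta|$. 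Total: $M\cdot(\log n)^{-q}\cdot(\log n)\cdot\ldots$; with the $(\log n)^{-1}$ hitting normalization this gives $C\delta^{-1}|\log\delta|(\log n)^{-(q-1)}$.

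The other three endpoint cases are identical by symmetry. For $J_k^{\rm L}(q)$ and $J_k^{\rm R}(q)$ the short segment sits in $J_k$ and the long one is $J_j$. For $J_j^{\rm L}(q)$ the gap is slightly larger, which only helps.

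The main obstacle is getting the correct per-pair bound for $r$ small ($r=2$, gap $h$). Here the crude count $a_n^{(q)}h/g^2$ is of order $(\log n)^{-q+1}$ with no extra gain, so we must invoke the $\log$-normalized hitting estimate, namely that a walk segment of length $L$ hit by another walk at distance $\sqrt g$ has probability $\asymp (\text{expected intersections})/\log L$. We would try first to import this directly from \cite[Chapter~4]{Lawler1991} (the same input yielding \eqref{eq:SW-prop23}). The freezing lemma~\ref{lem:freezing-four} is a fallback: apply it to the long segment with the short segment's start at distance $\ge\sqrt n(\log n)^{-q'}$, which fails with probability $C(\log n)^{-p}$.
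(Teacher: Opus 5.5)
\paragraph{The gap: the count of candidate pairs.} Your per-pair bound $O\bigl((\delta\varepsilon)^{-1}r^{-2}(\log n)^{-q}\bigr)$ for $k-j=r$ matches the paper's, but the counting step does not give the stated bound. Allowing up to $M=\delta^{-1}\varepsilon^{-1}\log n$ pairs of every length gives a total of order $\varepsilon^{-2}\delta^{-2}(\log n)^{-(q-1)}$, not $\delta^{-1}|\log\delta|(\log n)^{-(q-1)}$. Your attempt to recover $|\log\delta|$ through a ``more honest'' per-pair bound $a_n^{(q)}/g$ is mistaken:
\begin{itemize}
\item Since $g=(r-1)h\ge h$, the bound $a_n^{(q)}h/g^2$ is already sharp; the expected count is $\asymp a_n^{(q)}h/(g(g+h))$.
\item The bound $a_n^{(q)}/g$ is weaker by a factor $r-1$, and the two agree at $r=2$, not at $r\sim\delta^{-1}$.
\item Using $a_n^{(q)}/g$ would cost an extra factor $\log M\asymp\log\log n$.
\end{itemize}
The missing idea is that $\mathcal P$ only contains pairs whose span crosses a macro endpoint $t_i=u_{i\delta^{-1}}$. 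Each $t_i$ is crossed by at most $r$ pairs of length $r$. So for $r\le\delta^{-1}$ there are at most $N_\varepsilon r$ candidates rather than $\delta^{-1}N_\varepsilon$; for $r>\delta^{-1}$ the trivial count $M$ is used. The harmonic sum up to $\delta^{-1}$ is indeed the source of $|\log\delta|$. But its terms are (number of pairs $\propto r$) times (probability $\propto r^{-2}$), not a different per-pair estimate:
\[
\sum_{r=2}^{\delta^{-1}}N_\varepsilon r\,\frac{C}{\delta\varepsilon r^2(\log n)^q}
+\sum_{r>\delta^{-1}}M\,\frac{C}{\delta\varepsilon r^2(\log n)^q}
\le C\varepsilon^{-2}\delta^{-1}\bigl(1+|\log\delta|\bigr)(\log n)^{-(q-1)}.
\]

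\paragraph{Smaller points.}
\begin{itemize}
\item Your first-moment fallback does not prove the lemma. It loses a full factor $\log n$, giving exponent $q-2$, and this cannot be ``absorbed by a choice of $q$'' since $q$ is fixed in the statement.
\item The freezing lemma cannot substitute either. It gives only $C(\log\log n)^C/\log n$ per pair, with no gain from the shortness of $J_j^{\rm R}(q)$.
\item The $(\log n)^{-1}$ gain needs no new short-versus-long hitting estimate. The paper cuts $J_k$ into $\delta\varepsilon(\log n)^{q-1}$ pieces of length $a_n^{(q)}$. It applies the equal-length bound \eqref{eq:block-intersection-upper} at scale $a_n^{(q)}$ to each piece; the $m$-th piece sits at gap about $(r-1)\delta\varepsilon(\log n)^{q-1}+m$ in these units. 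It then sums over $m$.
\item For the later uses (fixed $\varepsilon,\delta$, $n\to\infty$) your cruder $\delta^{-2}$ constant would suffice. It does not, however, give the stated dependence on $\delta$.
\end{itemize}
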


\begin{proof}
We prove~\eqref{eq:endpoint-near-revised}; the other endpoint versions are
identical by time reversal or by exchanging the two blocks. 
Fix a candidate pair $(j,k)\in\mathcal P$ and partition
$J_k$ into intervals of length $a_n^{(q)}$.  
By~\eqref{Eq14}, applied to the pair of
intervals $J_j^{\rm R}(q)$ and a subinterval of $J_k$, and summing over the
subintervals of $J_k$,
\[
   \mathbb{P}\bigl(S(J_j^{\rm R}(q))\cap S(J_k)\ne\emptyset\bigr)
   \le \frac{C(\delta\varepsilon)^{-1}}{(k-j-1)^2(\log n)^q},
   \qquad k\ge j+2.
\]
If $\ell =k-j\le \delta^{-1}$, then for each macro endpoint there are at most $\ell$
candidate pairs of length $\ell$, hence at most $N_\varepsilon \ell$ such pairs.  If
$\ell>\delta^{-1}$, there are at most $M-\ell$ such pairs.  Therefore
\[
\sum_{\ell=2}^{\delta^{-1}}N_\varepsilon (\ell-1)\frac{C(\delta\varepsilon)^{-1}}{\ell^2(\log n)^q}
+
\sum_{\ell=\delta^{-1}+1}^{M}(M-\ell)\frac{C(\delta\varepsilon)^{-1}}{\ell^2(\log n)^q}
\le C\delta^{-1}|\log\delta|(\log n)^{-(q-1)}.
\]
This proves the lemma.
\end{proof}
Let $\mathcal O$ be the event that no two distinct long edges have overlapping spans:
\begin{equation}
\label{eq:non-overlap-event-revised}
\mathcal O=\bigl\{ [u_{j-1},u_k]\cap [u_{j'-1},u_{k'}] 
=\emptyset
\text{ for all distinct }(j,k),(j',k')\in\mathcal E_{\mathrm{RW}}\bigr\}.
\end{equation}

\begin{lemma}
\label{lem:non-overlap}
There exist $C=C_{\varepsilon,
\delta}<\infty$ and $C_0<\infty$ such that
\begin{equation}
\label{eq:non-overlap-bound-revised}
   \mathbb{P}(\mathcal O^c)
   \le C\frac{(\log\log n)^{C_0}}{\log n}.
\end{equation}
\end{lemma}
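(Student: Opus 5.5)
We bound $\mathbb P(\mathcal O^c)$ by a union bound over pairs of distinct candidate edges $e=(j,k)$, $f=(j',k')\in\mathcal P$ whose spans $[u_{j-1},u_k]$ and $[u_{j'-1},u_{k'}]$ intersect. The strategy is to show that each such pair costs a factor of $(\log n)^{-1}$ beyond a single edge, up to $(\log\log n)^C$ losses. Summing over all pairs should then give a total of order $(\log\log n)^{C_0}/\log n$.

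The pairwise estimate I aim for is
\[
\mathbb P(E_e\cap E_f)\le \frac{C(\log\log n)^{C}}{\log n}\cdot\frac{C}{(k-j)^2\log n}\cdot w(e,f)+C(\log n)^{-p},
\]
where $w(e,f)$ is a weight decaying in the length of $f$, ideally like $(k'-j')^{-2}$ or at least summable. To prove it, order the pair so that $f$ is the edge whose later block $J_{k'}$ lies at or after $J_k$; if $k'=k$, use the earlier endpoint instead. The argument then runs in four steps.

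\begin{enumerate}
\item By Lemma~\ref{lem:prim} with a large $q$, I discard the event that any intersection happens within $a_n^{(q)}$ of a block endpoint; this costs $C(\log n)^{-(q-1)}$.
\item I condition on the path $S$ up to time $u_{k'-1}$, together with whatever part of $S(J_k)$ is needed to decide $E_e$. When $J_k$ and $J_{k'}$ are distinct blocks, the event $E_f$ is then an intersection of the future walk from $S_{u_{k'-1}}$ with the frozen set $S(J_{j'})$.
\item The separation supplied by step 1 and Gaussian estimates ensure that $\operatorname{dist}(S_{u_{k'-1}},S(J_{j'}))\ge\sqrt h(\log n)^{-q}$ with probability $1-C(\log n)^{-p}$.
\item The freezing lemma~\ref{lem:freezing-four}, applied to the frozen path $S(J_{j'})$ and the walk from $S_{u_{k'-1}}$, bounds the conditional probability of $E_f$ by $C(\log\log n)^C/\log n$.
\end{enumerate}
This gives $\mathbb P(E_e\cap E_f)\le \mathbb P(E_e)\cdot C(\log\log n)^C/\log n + C(\log n)^{-p}$. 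To recover decay in $\ell(f)$, I would apply freezing at the block level: start the hitting walk at displacement of order $\sqrt{(k'-j')h}$ and use the $r^{-2}$ Green-function decay. Alternatively, I would split $f$ by length and use \eqref{eq:basic-edge-prob} directly for very long $f$ together with a crude independence-type bound.

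Summation then goes by cases. There are $O(\log n)$ choices of $e$ for each length, with $\mathbb P(E_e)\le C(k-j)^{-2}(\log n)^{-1}$. For fixed $e$, the number of overlapping $f$ of length $\ell'$ is $O(\ell(e)+\ell')$. With decay $\ell'^{-2}$ in $f$, the sum is
\[
\sum_{\ell,\ell'}\frac{\log n}{\ell^2\log n}\cdot(\ell+\ell')\cdot\frac{(\log\log n)^C}{\ell'^2\log n}\lesssim \frac{(\log\log n)^{C}\log M}{\log n},
\]
and the $\log M\lesssim\log\log n$ is absorbed into $C_0$. The error term $(\log n)^{-p}$ times the number of pairs, $O(M^2)=O((\log n)^2)$, is negligible once $p\ge3$.

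The main obstacle is the decay in $\ell(f)$ combined with the degenerate geometries: nested edges, shared blocks such as $j=j'$ or $k=k'$, and $J_{k'}=J_k$. In those cases one block carries two intersection events, so the freezing lemma must be applied to one walk hitting a union of two frozen pieces. My first attempt is to split the shared block at its midpoint and handle the two halves separately. The other delicate point is making the good event of Lemma~\ref{lem:freezing-four} independent of $E_e$; I would handle this by freezing only the block $J_{j'}$, which is independent of the future increments.
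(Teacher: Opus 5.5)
There is a genuine gap, and it sits exactly where you flag it. Steps 1--4 give you $\mathbb P(E_e\cap E_f)\le \mathbb P(E_e)\,C(\log\log n)^C/\log n+C(\log n)^{-p}$. This bound has no decay in $f$, while a typical $e$ has of order $M^2\asymp(\log n)^2$ overlapping candidates $f$, so the pairwise sum gives nothing.

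The decay you then hope for, $w(e,f)\approx\ell(f)^{-2}$, is false. Take $f=(j,k+1)$. On $E_e$ the block $S(J_k)$ meets $S(J_j)$, so $S_{u_k}$ lies within $O(\sqrt h)$ of $S(J_j)$. The conditional probability that $S(J_{k+1})$ hits $S(J_j)$ is therefore of order $1/\log n$, independently of $\ell(f)=\ell(e)+1$, and your claimed bound is too small by a factor $\ell(e)^{-2}$. The loop created by $E_e$ means the relevant starting distance is governed by how far $J_{j'},J_{k'}$ are from $J_j,J_k$ (for instance $|j'-j|+|k'-k|$ in the nested case), not by $\sqrt{(k'-j')h}$.

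A pairwise bound with decay in that variable would sum, at the cost of a factor $\log M$. Proving it, however, needs a distance-dependent quenched hitting estimate that Lemma~\ref{lem:freezing-four} does not provide. Your fallback for long $f$ fails as well. Dropping one of the two events does not sum over the $\asymp M^2$ pairs. A bound $\mathbb P(E_e\cap E_f)\le C\,\mathbb P(E_e)\mathbb P(E_f)$ is a decorrelation statement, and it is false in the same configurations.

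The missing idea is that no summation over $f$ is needed. By time reversal you may assume $k'\ge k+1$. Set aside the touching case $j'=k+1$: there $E_e$ and $E_f$ depend on disjoint increments, so that case contributes at most $\sum_e q_e\sum_{k'}q_{(k+1,k')}\le C/\log n$. Every other overlapping $f$ has $J_{j'}\subset[0,u_k]$. After discarding the event of Lemma~\ref{lem:prim}, the intersection defining $E_f$ is realised by $S[u_k+a_n^{(q)},n]$. So, for each fixed $e$, the union over all such $f$ is contained in the single event $\{S[u_k+a_n^{(q)},n]\cap S[0,u_k]\neq\emptyset\}$.

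Now condition on $\mathcal F_{u_k+a_n^{(q)}}$. The following are all measurable with respect to it: $E_e$, the good-path event for the frozen past $S[0,u_k]$, and the separation event $\operatorname{dist}(S_{u_k+a_n^{(q)}},S[0,u_k])\ge\sqrt n(\log n)^{-q'}$. So no independence from $E_e$ is needed; the complements of the last two are simply paid for as $C(\log n)^{-p}$. Applying the freezing lemma to the whole past gives
\[
\mathbb P\bigl(E_e\cap\{\exists f\}\bigr)\le C\frac{(\log\log n)^{C_0}}{\log n}\,\mathbb P(E_e)+C(\log n)^{-p}.
\]
Sum over $e\in\mathcal P$, using $\sum_e\mathbb P(E_e)\le C_{\varepsilon,\delta}$ from \eqref{eq:basic-edge-prob} and $\#\mathcal P\le C(\log n)^2$. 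This proves the lemma and handles the nested, crossing and shared-block configurations in one stroke.
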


\begin{proof}
There are two configurations to exclude.  Either one span is contained in the other,
or two spans cross without containment.  We first treat the nested case in detail, since
this is the model application of the freezing lemma used throughout the rest of the paper.

Fix $(j,k)\in\mathcal P$ and expose the walk up to $t$; write
\[
   \mathcal F_t=\sigma(S_{t'}:0\le t'\le t).
\]
The event $(j,k)\in\mathcal E_{\mathrm{RW}}$ is $\mathcal F_{u_k}$-measurable. Without loss of generality, we may assume that the edges $(j,k)$ and $(j',k')$ differ on the second endpoint with $k' \ge k+1$. (We can use time reversal in order to assume this.) Using Lemma \ref{lem:prim}, we can further ignore the endpoint near portion and deduce that even if $k' = k +1$, there is no intersection on $S[u_k, u_k + a_n^{(q)}]$.

We now explain precisely how Lemma~\ref{lem:freezing-four} is applied.  
%Let $\mathscr R_n=3\sqrt n\log n$ and 
Let $\mathcal G_k$ be the event that the translated frozen trace
\[
   \bigl(S[0,u_k]- S_{u_{k} +a_n^{(q)}}\bigr)\cap C_{3\sqrt n\log n}
\]
satisfies the good-set condition of \cite[Proposition~4.1]{Lawler1992} at all dyadic scales needed below (see the definition of $C_{3\sqrt n\log n}$ in \cite[Proposition~4.1]{Lawler1992}).  
By \cite[Proposition~4.1]{Lawler1992} and a union bound over the dyadic scales,
for every prescribed $p>0$,
\begin{equation}
\label{eq:Gk-good-prob}
   \mathbb{P}(\mathcal G_k^c)\le C(\log n)^{-p}.
\end{equation}
On $\mathcal G_k$, \cite[Proposition~4.3]{Lawler1992} gives the corresponding quenched hitting estimate.  
In the notation of Section~\ref{subsec:freezing-four}, this
is exactly the estimate stated as Lemma~\ref{lem:freezing-four} after translation.

We also impose the separation event
\[
   \mathcal H_k=
   \left\{\operatorname{dist}(S_{u_{k} + a_n^{(q)}},S[0,u_k])
      \ge \sqrt n(\log n)^{-q'}\right\},
\]
where $q'$ is chosen large. 
For later reference, we mention here that \cite[Proposition~4.1]{Lawler1992} gives the following key separation estimate. Let \(A>0\) and \(p>0\).   Then, by choosing \(q_{\rm sep}=q_{\rm sep}(A,p)\) sufficiently large,
\begin{equation}
\label{eq:standard-separation-for-freezing}
\mathbb P^x\bigl(
    \operatorname{dist}(S_m,\Gamma)\le m^{1/2}(\log n)^{-q_{\rm sep}}
    \mid \Gamma
\bigr)
\le C(\log n)^{-p}
\end{equation}
for \(m\in[n(\log n)^{-A},n]\), and any previously exposed path \(\Gamma\) satisfying the good-path condition. 
Now, we can apply the separation estimate~\eqref{eq:standard-separation-for-freezing}, with
$A=q$ and deduce that, for any prescribed $p>0$, 
\begin{equation}
\label{eq:Hk-separation}
   %\sum_{(j,k)\in\mathcal P}
   \mathbb{P}\bigl(\mathcal H_k^c\bigr)
   \le C(\log n)^{-p}. 
\end{equation}
This is the separation estimate that is needed in
all subsequent applications of the freezing lemma.

On $\mathcal G_k\cap\mathcal H_k$, the Markov property at time $u_{k+1}$ and
\cite[Proposition~4.3]{Lawler1992} give, uniformly in the frozen past,
{ 
\begin{equation}
\label{eq:quenched-hit-frozen-past}
\mathbb{P}\left(
   S[u_{k+1},a_n^{(q)}]\cap S[0,u_k]\ne\emptyset
   \mid \mathcal F_{u_k+a_n^{(q)}}
\right)
\le C\frac{(\log\log n)^{C_0}}{\log n}.
\end{equation}
}
Indeed, after translating $S_{u_k+a_n^{(q)}}$ to the origin, the future increments form a fresh
simple random walk, the frozen set is a good deterministic path by $\mathcal G_k$, and
$\mathcal H_k$ gives the required distance from the starting point to the frozen set.
Combining \eqref{eq:Gk-good-prob}, \eqref{eq:Hk-separation}, and
\eqref{eq:quenched-hit-frozen-past}, we obtain, for each fixed $(j,k)$,
\[
\begin{aligned}
&\mathbb{P}\bigl(
 (j,k)\in\mathcal E_{\mathrm{RW}},  \exists (j',k')\in\mathcal E_{\mathrm{RW}}: \text{ s.t. } [u_{j-1}, u_k] \cap [u_{j'-1}, u_{k'}] \ne \emptyset \text{ and } k' \ge k+1)\\
   %{\rm span}(j,k)\subsetneq {\rm span}(j',k')\bigr) \\
&\hspace{2cm}
\le C\frac{(\log\log n)^{C_0}}{\log n}
   \mathbb{P}((j,k)\in\mathcal E_{\mathrm{RW}})+C(\log n)^{-p}.
\end{aligned}
\]
After summing over $(j,k)\in\mathcal P$ and using \eqref{eq:basic-edge-prob}, the nested
contribution is bounded by the right-hand side of \eqref{eq:non-overlap-bound-revised}. 
\end{proof}

\begin{rem} \label{rem:overlapmicro}
We remark here that one can obtain the same lack of overlap estimates if one restricts to looking at overlapping edges of the microblocks. Namely, we can drop the condition that $(j,k)$ or $(j',k')$ contain some $t_i$ inside the corresponding portion of the random walk. This will be used in the proof of Lemma \ref{lem:edge-number} below.

\end{rem}

For a left endpoint $u_{j-1}$ define
\[
\begin{aligned}
A_j&:=\{\exists T\in[u_{j-1},u_{j-1}+2a_n^{(6)}]
        \text{ such that }T\text{ is a global cut time}\}. 
        %,\\A_j&=A_j^-\cap A_j^+.
\end{aligned}
\]
Similarly, for a right endpoint $u_k$ define
\[
\begin{aligned}
A_k&:=\{\exists T\in[u_k-2a_n^{(6)},u_k]
        \text{ such that }T\text{ is a global cut time}\}. 
\end{aligned}
\]
If the interval touches $0$ or $n$, we use $0$ or $n$ as the corresponding boundary
cut time; the resulting boundary error is local and will be absorbed by
Lemma~\ref{lem:edge-number}.

\begin{lemma}
\label{lem:global-cut-near-edge}
There exist $C=C_{\varepsilon,
\delta}<\infty$ and $C_0<\infty$ such that
\begin{equation*}
\mathbb{P}\bigl(\exists (j,k)\in\mathcal E_{\mathrm{RW}}:
A_j^c\text{ or }A_k^c\text{ occurs}\bigr)
\le C\frac{(\log\log n)^{C_0}}{\log n}.
\end{equation*}
\end{lemma}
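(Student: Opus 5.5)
A union bound over candidate edges will reduce the lemma to a one-edge estimate in which the edge event and the missing-cut-time event are essentially decoupled. By symmetry (time reversal) it suffices to bound
\[
\sum_{(j,k)\in\mathcal P}\mathbb{P}\bigl((j,k)\in\mathcal E_{\mathrm{RW}},\ A_j^c\bigr).
\]
The target bound is $C(\log\log n)^{C_0}(\log n)^{-1}\,\mathbb{P}((j,k)\in\mathcal E_{\mathrm{RW}})+C(\log n)^{-p}$. Summing with \eqref{eq:basic-edge-prob} then gives a total of $C\frac{(\log\log n)^{C_0}}{\log n}$, because $\sum_{(j,k)\in\mathcal P}\mathbb{P}((j,k)\in\mathcal E_{\mathrm{RW}})\le C_{\varepsilon,\delta}$ and $|\mathcal P|\le C(\log n)^2$, so that $p\ge 3$ suffices.

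First, by Lemma~\ref{lem:prim} with some $q$ (say $q=3$, well above the cut-time window $2a_n^{(6)}$), we may assume the intersection realizing $(j,k)$ avoids $S(J_j^{\rm L}(q))$ and $S(J_k^{\rm R}(q))$; the exceptional probability is $O((\log n)^{-2})$. Let $W=[u_{j-1},u_{j-1}+2a_n^{(6)}]$.

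Next I write $A_j^c\subset B^{\rm loc}\cup B^{\rm glob}$. Here $B^{\rm loc}$ is the event that there is no time $T\in W$ which is a local cut time for the window $[u_{j-1}-a_n^{(5)},u_{j-1}+a_n^{(5)}]$; this window has length $a_n^{(5)}\gg a_n^{(6)}$. $B^{\rm glob}$ is the event that some long-range intersection $S[0,u_{j-1}-a_n^{(5)}]\cap S[u_{j-1}+a_n^{(5)},n]$ passes over $W$, i.e.\ it kills all the local cut times in $W$. I will use the standard four-dimensional cut-time estimates (Lawler's: in an interval of length $m$, cut times have density $\asymp(\log m)^{-1/2}$, and the probability of having no local cut time in a window of length $m(\log m)^{-1}$ inside a window of length $m$ decays faster than any power of $\log$). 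This gives $\mathbb{P}(B^{\rm loc})\le C(\log n)^{-p}$. The event $B^{\rm loc}$ depends only on increments in a window of length $2a_n^{(5)}$, which is disjoint from the time sets where the $(j,k)$ intersection lives once endpoint neighbourhoods are excluded. Since it is supported on disjoint increments, this error is independent enough to just add. More carefully, I would bound $\mathbb{P}(E\cap B^{\rm loc})$ by conditioning on that window and freezing, or simply bound it by $\mathbb{P}(B^{\rm loc})$.

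The main obstacle is $B^{\rm glob}$ jointly with the edge $(j,k)$: a second crossing over $u_{j-1}$ is an overlapping long-range intersection. I would handle it exactly as in the proof of Lemma~\ref{lem:non-overlap}. Condition on $\mathcal F_{u_{j-1}+a_n^{(5)}}$, impose the good-path event $\mathcal G$ and the separation event $\mathcal H$ from \eqref{eq:Gk-good-prob} and \eqref{eq:Hk-separation}, at cost $(\log n)^{-p}$. Then apply \eqref{eq:quenched-hit-frozen-past}: the future path returning to $S[0,u_{j-1}-a_n^{(5)}]$ has conditional probability at most $C(\log\log n)^{C_0}/\log n$. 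It remains to get the extra factor $\mathbb{P}((j,k)\in\mathcal E_{\mathrm{RW}})$. For this I would split according to whether the return hits before time $u_k$ or after, dyadically in the time scale of the future walk, and apply the freezing bound twice: once to the crossing of $u_{j-1}$, and once (with a freshly frozen past at a later exposure time) to the $(j,k)$ intersection. Here the disjointness from Lemma~\ref{lem:prim} ensures the two returns are separated in time. The delicate part is ordering the exposures so that each freezing step sees a good frozen path at the required distance. The cruder alternative of bounding $\mathbb{P}(B^{\rm glob})\le C(\log\log n)^{C_0}/\log n$ alone is insufficient, since it must be summed over $O(\log n)$ endpoints. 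I would therefore restrict the crossing return to a time window in which it costs $O(a_n^{(5)}/n\cdot\log n)$ relative to the full edge probability. Summing over the long-range crossings with a Green's function bound then gives a factor $(\log n)^{-1}$ per endpoint, and that closes the estimate.
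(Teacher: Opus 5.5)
Your skeleton is the paper's: a union bound over candidate edges, a local cut-time event whose failure is paid against the edge probability, and non-globality charged as an extra long range intersection via the freezing argument of Lemma~\ref{lem:non-overlap}. However, the local step rests on a false estimate. It is not true that $\mathbb{P}(B^{\rm loc})\le C(\log n)^{-p}$. Any intersection of $S[u_{j-1}-a_n^{(5)}/2,u_{j-1}]$ with $S[u_{j-1}+a_n^{(5)}/2,u_{j-1}+a_n^{(5)}]$ destroys every (local or global) cut time in $W$, and by Proposition~\ref{prop:sharp-lri-4d} with $k=1$ this alone has probability at least $c/\log n$. The true order is $\log\log n/\log n$, which is exactly the cut-time bound the paper quotes for $A_{j,\mathrm{loc}}$. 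So treating this term additively fails, whether you ``just add'' it or bound $\mathbb{P}(E\cap B^{\rm loc})$ by $\mathbb{P}(B^{\rm loc})$. Summed over the $M=(\varepsilon\delta)^{-1}\log n$ left endpoints it gives at least $c(\varepsilon\delta)^{-1}$ (in fact order $\log\log n$), not $o(1)$. You noticed that the increments are disjoint, but you must use this to get a product, not an additive error. After trimming with Lemma~\ref{lem:prim}, the event $\{S[u_{j-1}+a_n^{(3)},u_j]\cap S(J_k)\ne\emptyset\}$ depends only on increments after time $u_{j-1}+a_n^{(3)}$, and the local cut-time event only on increments before it. The two are therefore exactly independent, so $\mathbb{P}(E\cap B^{\rm loc})\le C\frac{\log\log n}{\log n}\mathbb{P}(E)$, which sums correctly by \eqref{eq:basic-edge-prob}. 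This is precisely the paper's first step.

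Two further problems. First, the inclusion $A_j^c\subset B^{\rm loc}\cup B^{\rm glob}$ is false for your two-sided window. A local cut time $T\in W$ for $[u_{j-1}-a_n^{(5)},u_{j-1}+a_n^{(5)}]$ also fails to be global if $S[0,u_{j-1}-a_n^{(5)}]$ hits $S(T,u_{j-1}+a_n^{(5)}]$, or if $S[u_{j-1}-a_n^{(5)},T]$ hits $S[u_{j-1}+a_n^{(5)},n]$. Neither is a crossing from $S[0,u_{j-1}-a_n^{(5)}]$ to $S[u_{j-1}+a_n^{(5)},n]$, and each has probability at least of order $1/\log n$, so both must also be paid against $\mathbb{P}(E)$. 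The paper avoids this by taking the local window to be $[0,u_{j-1}+a_n^{(3)}]$. Then the only obstruction is $S[0,T]\cap S[u_{j-1}+a_n^{(3)},n]\ne\emptyset$, an additional long range intersection overlapping $(j,k)$ in the micro-block sense of Remark~\ref{rem:overlapmicro}. As in Lemma~\ref{lem:non-overlap}, this costs an extra factor $C(\log\log n)^{C_0}/\log n$ relative to $\mathbb{P}(E)$. Second, your closing mechanism for $B^{\rm glob}$ is not valid: you confine the return to a time window costing $O(a_n^{(5)}\log n/n)$ and sum with a Green's function bound. The crossing probability is spread over all dyadic scales between $a_n^{(5)}$ and $n$ and cannot be localized in time. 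The correct bookkeeping is that returns completed before $u_{j-1}+a_n^{(3)}$ are independent of the trimmed edge event (product bound), while later returns are extra intersections handled by freezing.
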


\begin{proof}
We prove the estimate for $A_j$; the proof for $A_k$ follows by time reversal.  
Let $A_{j,\mathrm{loc}}$ be the event that there is a
local cut time in $[u_{j-1}+a_n^{(6)},u_{j-1}+2a_n^{(6)}]$ between $0$  and $u_{j-1}+a_n^{(3)}$.
By the standard cut-time estimate, for instance \cite[Lemma~7.7.4]{Lawler1991},
\[
   \mathbb{P}\bigl((A_{j,\mathrm{loc}})^c\bigr)
   \le C\frac{\log\log n}{\log n}.
\]
Multiplying by the edge probability and summing over $(j,k)$ using
\eqref{eq:basic-edge-prob} gives an acceptable contribution. 
Considering $q=3$ in Lemma~\ref{lem:prim}, it is enough to show that 
\begin{equation*}
\mathbb{P}\bigl(\exists (j,k)\in\mathcal E_{\mathrm{RW}}: 
S[u_{j-1}+a_n^{(3)}, u_j] \cap S[u_{k-1},u_k] \neq \emptyset,  
A_j^c\text{ or }A_k^c\text{ occurs}\bigr)
\le C\frac{(\log\log n)^{C_0}}{\log n}.
\end{equation*}
We remark here that the event $A_{j, \text{loc}}$ is independent of $S[u_{j-1}+a_n^{(3)}, u_j] \cap S[u_{k-1},u_k] \neq \emptyset$ as they are measurable with respect to different parts of the range. 
Assume now that $S[u_{j-1}+a_n^{(3)}, u_j] \cap S[u_{k-1},u_k] \neq \emptyset$ and that such a local cut time $T$ exists.
If $T$ is not a global cut time, then
\[
   S[0,T]\cap S[u_{j-1}+a_n^{(3)},n]\ne\emptyset.
\]
Since $T\le u_{j-1}+2a_n^{(6)}$, this is an additional long range intersection apart
from the prescribed edge, where we use the definition of overlapping long range intersections from Remark \ref{rem:overlapmicro} to include overlapping micro-blocks. 
Lemma~\ref{lem:freezing-four}, exactly as in Lemma~\ref{lem:non-overlap}, gives an
extra factor $C(\log\log n)^{C_0}/\log n$ relative to the probability of the edge
$(j,k)$.
This proves the lemma. 
\end{proof}

The next lemma supplies local cut times inside macro blocks.  It is used only for
regular endpoints.  If an endpoint is within two small blocks of the boundary of a
macro block, we do not use this lemma; the missing local segment has length
$O(h)$ and will be bounded by $C\mathbb{E}[\Delta_n]$ in the shortcut
approximation.

\begin{lemma}
\label{lem:internalcutpoint}
Let $(j,k)\in\mathcal E_{\mathrm{RW}}$.  Suppose that
\[
   [u_{j-1},u_{j+2}]\subset I_a,
   \qquad [u_{k-2},u_{k+1}]\subset I_b.
\]
Let $\widetilde A_{j,k}$ be the event that there exist times
$L_j^+\in [u_{j+1},u_{j+2}]$ and $L_k^-\in [u_{k-3},u_{k-2}]$ such that
\begin{equation*}
%\label{eq:Lj-Lk-local-cut-definition}
        S[t_{a-1},L_j^+]\cap S[L_j^++1,t_a]=\emptyset,
        \qquad
        S[t_{b-1},L_k^-]\cap S[L_k^-+1,t_b]=\emptyset .
\end{equation*}
Equivalently, $L_j^+$ and $L_k^-$ are local cut times for the full macro blocks
$I_a=[t_{a-1},t_a]$ and $I_b=[t_{b-1},t_b]$, respectively.
Then
\begin{equation}
\label{eq:local-cut-inside-macro-revised}
\mathbb{P}\bigl(\exists (j,k)\in\mathcal E_{\mathrm{RW}}:\widetilde A_{j,k}^c\bigr)
\le C\frac{(\log\log n)^{C_0}}{\log n}.
\end{equation}
\end{lemma}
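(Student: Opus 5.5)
The proof follows the template of Lemma~\ref{lem:global-cut-near-edge}: a single-block cut-time estimate that is almost independent of the edge event, together with the freezing lemma to absorb the residual correlation. We treat $L_j^+$; the statement for $L_k^-$ follows by time reversal. The failure event is split according to whether the prescribed intersection $S(J_j)\cap S(J_k)\ne\emptyset$ uses the piece of $S(J_j)$ near the window $[u_{j+1},u_{j+2}]$ or not. Since $(j,k)\in\mathcal P$ crosses a macro endpoint, we have $a<b$ and $J_k\subset I_b$ lies outside $I_a$.

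\emph{Step 1 (localisation).} The existence of a local cut time of $I_a$ in $[u_{j+1},u_{j+2}]$ would follow from two facts. First, there is a cut time $T$ of the short window $[u_{j+1}-a_n^{(3)},u_{j+2}+a_n^{(3)}]$ lying in $[u_{j+1}+a_n^{(6)},u_{j+2}-a_n^{(6)}]$. Second, the pieces $S[t_{a-1},u_{j+1}-a_n^{(3)}]$ and $S[u_{j+2}+a_n^{(3)},t_a]$ do not create intersections across $T$.

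The first fact fails with probability at most $C\log\log n/\log n$ by \cite[Lemma~7.7.4]{Lawler1991}. This window event depends only on increments in $[u_{j+1}-a_n^{(3)},u_{j+2}+a_n^{(3)}]$. Modulo Lemma~\ref{lem:prim} with $q=3$, which lets us discard intersections of $J_k$ with endpoint neighbourhoods, the window event is therefore independent of the intersection of $S(J_j)$ with $S(J_k)$. Here $J_j$ is disjoint from the window except for $O(a_n^{(3)})$ times near $u_j$, and $u_{j+1}$ is at least one small block away. Hence this contribution is
\[
C\frac{\log\log n}{\log n}\,\mathbb P((j,k)\in\mathcal E_{\rm RW}).
\]
Summing over $(j,k)$ with \eqref{eq:basic-edge-prob}, and noting $\sum_{(j,k)\in\mathcal P}(k-j)^{-2}(\log n)^{-1}\le C_{\varepsilon,\delta}$, keeps the total at $C(\log\log n)^{C_0}/\log n$.

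\emph{Step 2 (no intersection across the cut inside $I_a$).} If $T$ is a window cut time but not a cut time of $I_a$, then either $S[t_{a-1},u_{j+1}-a_n^{(3)}]$ meets $S[T+1,t_a]$, or $S[u_{j+2}+a_n^{(3)},t_a]$ meets $S[t_{a-1},T]$. Each is an intersection between time pieces of $I_a$ separated by a gap of at least $a_n^{(3)}$. Cover each such event by micro-block intersections. Either these are long edges of micro-blocks, or they are intersections between pieces at distance between $a_n^{(3)}$ and $h$.

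The long-edge case is an overlapping pair with $(j,k)$ in the sense of Remark~\ref{rem:overlapmicro}. Arguing exactly as in Lemma~\ref{lem:non-overlap}, we freeze the past at the appropriate time and use Lemma~\ref{lem:freezing-four} together with the separation estimates \eqref{eq:Gk-good-prob} and \eqref{eq:Hk-separation}. This gives an extra factor $C(\log\log n)^{C_0}/\log n$ times $\mathbb P((j,k)\in\mathcal E_{\rm RW})$, plus an error $C(\log n)^{-p}$.

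The short-gap case is a mesoscopic self-intersection across a gap of at least $a_n^{(3)}$ within a window of length $O(h)$. By \eqref{Eq14} applied at dyadic scales between $a_n^{(3)}$ and $h$, its probability is $O(\log\log n/\log n)$. It is again independent of, or decoupled by freezing from, the far intersection with $J_k$.

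\emph{Step 3 (summation).} Summing both cases over $(j,k)\in\mathcal P$, the $(\log n)^{-p}$ errors are multiplied by at most $M^2=O((\log n)^2)$ pairs, which is harmless for $p\ge3$. Collecting terms gives \eqref{eq:local-cut-inside-macro-revised}.

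The main obstacle is the decoupling in Step 2. The local cut event of a whole macro block is not independent of $\{(j,k)\in\mathcal E_{\rm RW}\}$, because the intersection point in $J_j$ may lie close in time to $L_j^+$. The plan is to handle this by restricting, via Lemma~\ref{lem:prim}, to intersections at least $a_n^{(3)}$ away from block endpoints, and then freezing the path so that the separation hypothesis of Lemma~\ref{lem:freezing-four} holds.
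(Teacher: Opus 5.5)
\textbf{Overall structure.} Your outline has the same architecture as the paper's proof. A local cut time in $[u_{j+1},u_{j+2}]$, relative to a window, fails to exist with probability $O(\log\log n/\log n)$. This failure probability is multiplied by the edge probability and summed over pairs. The extra intersections that stop the window cut time from being a cut time of $I_a$ are then handled by freezing. The paper uses the window $[u_j,\min(u_{j+3},t_a)]$, with a full micro-block of buffer on each side, so every spoiling intersection is between pieces at time distance at least $h$. Your narrower window creates an extra short-gap case of probability $\asymp\log\log n/\log n$, which is harmless. To keep that event measurable with respect to $S[u_j,u_{j+3}]$, phrase it as intersections of adjacent micro-blocks. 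Your split into ``time distance at most $h$'' versus ``non-adjacent micro-blocks'' does not cover all cases.

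\textbf{The gap: decoupling the window event from the edge event.} You claim the window cut-time event is independent of $\{S(J_j)\cap S(J_k)\ne\emptyset\}$ because the two occupy disjoint time intervals, modulo Lemma~\ref{lem:prim}. This is false. The edge event depends on the increments in $J_j$ and $J_k$ and also on the displacement $S_{u_{k-1}}-S_{u_j}$. That displacement is the sum of all increments in $[u_j,u_{k-1}]$, including those of your window. Lemma~\ref{lem:prim} does not address this. For the same reason, the obstacle you name at the end (the intersection point in $J_j$ lying close in time to $L_j^+$) is not the real issue, since $J_j$ is a full micro-block away from $[u_{j+1},u_{j+2}]$.

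\textbf{The missing idea.} This is the substance of the paper's proof. Condition on the window path and rerun the first-intersection estimate behind Proposition~\ref{prop:sharp-lri-4d}. The window enters that estimate only through a factor $\sup_x\mathbb P(S_m=x)$, with the time gap $m$ reduced by $O(h)$. Hence, uniformly in the window path, $\mathbb P\bigl((j,k)\in\mathcal E_{\mathrm{RW}}\mid \text{window}\bigr)\le C(k-j-3)^{-2}(\log n)^{-1}$. This is meaningful because $k-j\ge 4$ under the hypotheses.

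\textbf{Your Step 2 short-gap case.} The same conditional bound is what this case needs. ``Decoupled by freezing'' does not work there. Lemma~\ref{lem:freezing-four} bounds the $J_k$--$J_j$ intersection only by $C(\log\log n)^{C_0}/\log n$, with no $(k-j)^{-2}$ decay. That bound does not survive summation over the $O((\log n)^2)$ pairs $(j,k)$.
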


\begin{proof}
We prove the estimate for $L_j^+$; the proof for $L_k^-$ is identical by time
reversal.  Let $B^L_{j,k}$ be the event that there is a local cut time
$L_j^+\in[u_{j+1},u_{j+2}]$ between $u_j$ and $\min(t_a,u_{j+3})$.  The standard
local cut-time estimate gives
\[
   \mathbb{P}((B^L_{j,k})^c)
   \le C\frac{\log\log n}{\log n}.
\]

Notice that the event, $(B_{j,k}^L)^c$ is measurable with respect to the region $S[u_j, \min(t_a,u_{j+3})]$. We now fix this part of the random walk and compute the probability of intersection of $S[u_{j-1},u_j]$ and $S[u_{k-1},u_k]$ after conditioning on this part of the random walk.

We can follow the estimates of  \cite[Proposition 2.3]{ShiraishiWatanabe2026} with very similar notation. Namely, we can let $\tau$ be the last time in $[u_{j-1},u_j]$ that has an intersection with $S[u_{k-1},u_k]$ and $\sigma$ be the first time in $[u_{k-1}, u_k]$ equal to $S(\tau)=S(\sigma)$. We can analogously define the event  $\Gamma(j,k)$ and notice that the only change that needs to be made for the probability estimate is to replace the term $\sup_{x \in \mathbb{Z}^4}
\mathbb{P}(S_{u_{j-1} - u_k} - 2 n(\log n)^{-100} =x) $ with $\sup_{x \in \mathbb{Z}^4} \mathbb{P}(S_{u_{j-1} - u_k} - 3 h - 2n (\log n)^{-100} = x)$. Notice, that conditioning on the time interval $[u_j,\min(t_a,u_{j+3})]$ effectively reduces the time interval between $[u_{j-1},u_j]$ and $[u_{k-1}, u_k]$ by $3h$. Since we take a supremum over all $x$, the actual displacement caused by the segment $[u_{j}, \min(u_{j+3},t_a)]$ is inconsequential.

Thus, even when conditioning on $S[u_j, \min(u_{j+3},t_a)]$, the probability of intersection between $S[u_{j-1},u_j]$ and $S[u_{k-1},u_k]$ is bounded by $\frac{C}{( k - j - 3)^2 \log n}$. 
After multiplication by the prescribed edge probability and summation over $(j,k)$, this is bounded by the right-hand side of~\eqref{eq:local-cut-inside-macro-revised}.

If the local cut time exists but is not a local cut time for the whole macro block
$I_a=[t_{a-1},t_a]$, then one of the two extra intersections
\[
   S[t_{a-1},u_j]\cap S[u_{j+1},t_a]
\ne\emptyset,
   \qquad
   S[u_j,u_{j+2}]\cap S[\min(u_{j+3},t_a),t_a]
\ne\emptyset
\]
occurs.  Conditional on the prescribed edge, each such extra intersection is
estimated by freezing the appropriate path segment and applying Lemma~\ref{lem:freezing-four},
just as in Lemma~\ref{lem:non-overlap}.  Summing over all edges gives the claim.
\end{proof}

\subsection{Macro endpoints not crossed by long edges}
\label{subsec:lambda2-cut-times}

Define
\[
   \Lambda_1=\bigl\{1\le i\le N_\varepsilon-1:
      \exists (j,k)\in\mathcal E_{\mathrm{RW}}\text{ with }j\le i\delta^{-1}<k\bigr\},
   \qquad
   \Lambda_2=\{1,\ldots,N_\varepsilon-1\}\setminus\Lambda_1.
\]
On the event $\mathcal O$, each $i\in\Lambda_1$ is crossed by a unique long edge.
If $i\in\Lambda_2$, then no long edge crosses $t_i$, and hence
\begin{equation}
\label{eq:lambda2-separation-revised}
   S[0,t_i-h]\cap S[t_i,n]=\emptyset,
   \qquad
   S[0,t_i]\cap S[t_i+h,n]=\emptyset.
\end{equation}
Indeed, either intersection would create a pair of non-adjacent small blocks whose
span crosses $t_i$. 
Choose $q_1,q_2$ with
\[
   q_1>5/2,
   \qquad q_2>q_1+3/2,
\]
and consider $a_n^{(q_1)}$ and $a_n^{(q_2)}$. 
For $1\le i\le N_\varepsilon-1$ define
\[
\begin{aligned}
A_i^-&:=\{\exists T\in[t_i-h-2{ a_n^{(q_2)}},t_i]
          \text{ such that }T\text{ is a global cut time}\},\\
A_i^+&:=\{\exists T\in[t_i,t_i+h+2{a_n^{(q_2)}}]
          \text{ such that }T\text{ is a global cut time}\},\\
A_i&:=A_i^-\cap A_i^+.
\end{aligned}
\]

\begin{lemma}
\label{lem:cut-near-lambda2}
There exists an event $CT^{\Lambda_2}_i$ such that $CT^{\Lambda_2}_i \subset A_i$ and  
there exist $C=C_{\varepsilon,
\delta}<\infty$ and $C_0<\infty$ such that
\begin{equation}
\label{eq:cut-near-lambda2-revised}
   \mathbb{P}\bigl(\exists i\in\Lambda_2 :{(CT^{\Lambda_2}_i)^c}\bigr)
   \le C\frac{(\log\log n)^{C_0}}{\log n}.
\end{equation}
\end{lemma}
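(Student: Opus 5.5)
The event $CT^{\Lambda_2}_i$ will be built in two layers: a local cut-time event in a window of length $a_n^{(q_2)}$ just inside $[t_i-h-2a_n^{(q_2)},t_i]$ (and symmetrically on the right), together with a ``no extra intersection'' condition that upgrades the local cut time to a global one. Concretely, let $B_i^-$ be the event that there is a time $T\in[t_i-h-2a_n^{(q_2)},t_i-h-a_n^{(q_2)}]$ which is a local cut time between $t_i-h-a_n^{(q_1)}$ and $t_i-h$. Let $B_i^+$ be the analogous event on $[t_i+h+a_n^{(q_2)},t_i+h+2a_n^{(q_2)}]$. Set
\[
CT_i^{\Lambda_2}=B_i^-\cap B_i^+\cap\{\text{no intersection of } S[0,T] \text{ with } S[T+1,n] \text{ for the chosen }T\text{'s}\}.
\]
This gives $CT_i^{\Lambda_2}\subset A_i$ by construction. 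The separation \eqref{eq:lambda2-separation-revised} is used only through the fact that $i\in\Lambda_2$; the event itself need not refer to $\Lambda_2$.

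The first step bounds the local part. By the standard cut-time estimate \cite[Lemma~7.7.4]{Lawler1991}, a window of length $a_n^{(q_2)}$ inside an interval of length $a_n^{(q_1)}$ contains a local cut time except with probability $C\log\log n/\log n$. This is too weak for a union bound over all $N_\varepsilon-1\asymp\log n$ indices $i$. I would therefore boost it: split the window into $K=\lceil C\log\log n\rceil$ disjoint subwindows, separated from one another by $a_n^{(q_2)}(\log n)^{-1}$. The local cut-time events on these subwindows are nearly independent after discarding intersections between distinct subwindows, which the freezing Lemma~\ref{lem:freezing-four} and \eqref{eq:block-intersection-upper} control. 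The failure probability then becomes $(C\log\log n/\log n)^{K}$ plus the dependency error, which is $o((\log n)^{-2})$. Summing over $i$ is then harmless.

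The second step upgrades a local cut time $T$ to a global one. If the upgrade fails, then
\[
S[0,T]\cap S[T+1,n]\neq\emptyset
\]
while $T$ is local. Hence either $S[0,t_i-h-a_n^{(q_1)}]$ meets $S[T+1,n]$, or $S[0,T]$ meets $S[t_i-h,n]$. Because $i\in\Lambda_2$ no long edge crosses $t_i$, so by \eqref{eq:lambda2-separation-revised} any such intersection must land within distance $h$ of $t_i$ on one side, at a gap of at least $a_n^{(q_1)}$ in time. These ``short-scale'' intersections are not long edges, so I would bound them directly. The walk segment of length $h$ on one side must hit a segment of length $\ge a_n^{(q_2)}$ on the other, across a gap of order $a_n^{(q_1)}$. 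The separation estimate \eqref{eq:standard-separation-for-freezing} together with the freezing lemma gives probability $\le C(\log\log n)^{C_0}/\log n$ times a factor coming from the ratio of scales. The choices $q_2>q_1+3/2$ and $q_1>5/2$ are exactly what make this factor, for instance $(\log n)^{-(q_2-q_1)}\cdot(\log n)^{\dots}$, summable over the $\log n$ indices with a leftover $(\log n)^{-1}$.

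The main obstacle is this second step. The union over $i\in\Lambda_2$, a random set, must not cost a full factor $\log n$, and no long edge is available to pay a $1/\log n$ factor as in Lemma~\ref{lem:global-cut-near-edge}. I would first try to obtain a per-$i$ bound of order $(\log n)^{-2}(\log\log n)^{C_0}$ by a two-intersection argument. A non-global local cut time near $t_i$, combined with the absence of crossing edges, forces a near-intersection at scale $a_n^{(q_1)}$ with distance below $\sqrt{n}(\log n)^{-q_{\rm sep}}$. Each such intersection carries a $1/\log n$, and the exponent conditions on $q_1,q_2$ should supply the rest. If that fails, the fallback is to exploit the conditional independence of the different $t_i$ given the macro-block structure and apply a Chebyshev bound on the number of bad $i$.
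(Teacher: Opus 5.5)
Your proposal has a genuine gap. Neither step gives a per-index failure probability of order $o(1/\log n)$, and that is what the union over the $\asymp\varepsilon^{-1}\log n$ indices requires.

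\textbf{Step~1 (boosting by subdivision).} Subdividing one window cannot boost the local cut-time probability. Consider a single intersection between $S[t_i-h-a_n^{(q_1)},t_i-h-2a_n^{(q_2)}]$ and $S[t_i-h-a_n^{(q_2)},t_i-h]$, the parts of your local interval before and after the window. It destroys every local cut time in every subwindow at once. By the two-block estimate it has probability about $\frac{\log 2}{2\log n}$. Hence $\mathbb{P}((B_i^-)^c)\ge c/\log n$ however you subdivide. The subwindow failures are strongly positively correlated, so the claimed dependency error is not $o((\log n)^{-2})$. The freezing lemma cannot remove this, since it only prices a \emph{second} intersection given a first.

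\textbf{Step~2 (upgrading to a global cut time).} Putting the window next to $t_i-h$ leaves $S[T+1,t_i]$, of length about $h$, unprotected. Its intersections with $S[0,T]$ do not cross $t_i$, so $i\in\Lambda_2$ says nothing about them. The local cut property only covers the walk up to $t_i-h$. Either of the following intersections already makes $T$ non-global:
\begin{itemize}
\item $S[0,t_i-h-a_n^{(q_1)}]$ meeting $S[T+1,t_i]$;
\item $S[t_i-h-a_n^{(q_1)},T]$ meeting $S[t_i-h,t_i]$.
\end{itemize}
Each has probability of order $\log\log n/\log n$. The relevant scale ratios, $h/a_n^{(q_1)}$ and $a_n^{(q_1)}/a_n^{(q_2)}$, are large, so they produce a $\log\log n$ rather than a negative power of $\log n$. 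No second intersection is forced, so your two-intersection heuristic has nothing to bite on. Likewise, no small factor from $q_2>q_1+3/2$ appears in this geometry.

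\textbf{The fallback.} The Chebyshev fallback also fails. Per-index failure is $\gtrsim 1/\log n$, and the killing events live on disjoint path pieces. So the expected number of bad $i$ is of order one, and $\mathbb{P}(\exists\,\text{bad } i\in\Lambda_2)$ does not tend to zero.

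\textbf{What the paper does.} It gets $O((\log\log n)^{C_0}/(\log n)^2)$ per index from two windows at different scales whose local intervals are \emph{disjoint}:
\begin{itemize}
\item $Q_i^2=[t_i-2a_n^{(q_2)},t_i-a_n^{(q_2)}]$, local relative to $[t_i-a_n^{(q_1)},t_i]$;
\item $Q_i^1=[t_i-3a_n^{(q_1)},t_i-2a_n^{(q_1)}]$, local relative to $[0,t_i-a_n^{(q_1)}]$.
\end{itemize}
These are independent, so both fail only with probability $O((\log\log n)^{C_0}/(\log n)^2)$. Accordingly, $CT^{\Lambda_2}_i$ is a union of the two routes, not an intersection.

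The fine window sits within $2a_n^{(q_2)}$ of $t_i$. Hence on $\Omega_i^2\cap\{i\in\Lambda_2\}$ the set $S[0,t_i-a_n^{(q_2)}]$ is disjoint from $S[t_i,n]$. The only remaining obstruction is $S[0,t_i-a_n^{(q_1)}]\cap S[t_i-2a_n^{(q_2)},t_i]\neq\emptyset$. This costs $Ca_n^{(q_2)}/(a_n^{(q_1)}\log n)=O((\log n)^{-5/2})$, and this is where $q_2>q_1+3/2$ is used.

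The remaining cases are handled as follows.
\begin{itemize}
\item $(\Omega_i^2)^c$ is split into the block intersections $\chi(\ell,r)$. Each is paired with a fresh local cut time next to it, and freezing pays for non-globality.
\item The coarse cut time in $Q_i^1$ is automatically global on $\Omega_i^1\cap\Omega_i^2\cap\{i\in\Lambda_2\}$.
\item $(LC_i^2)^c\cap(\Omega_i^1)^c$ is again handled by freezing.
\end{itemize}
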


\begin{proof}

We will consider the left-side estimate; the right side follows by time reversal. We will first define many relevant sets; only at the end will we appropriately define the set $CT_i^{\Lambda_2}$.  Let
\[
   \Omega_i^2=\{S[t_i-h,t_i-{a_n^{(q_2)}}]\cap S[t_i,t_i+h]=\emptyset\},
\]
\[
   \Omega_i^1=\{S[0,t_i-2{a_n^{(q_1)}}]\cap S[t_i-{a_n^{(q_1)}},t_i]=\emptyset\}.
\]
Let
\[
   Q_i^2=[t_i-2{a_n^{(q_2)}},t_i-{a_n^{(q_2)}}],
   \qquad
   Q_i^1=[t_i-3{a_n^{(q_1)}},t_i-2{a_n^{(q_1)}}],
\]
and define the local cut-time events
\[
\begin{aligned}
{LC}_i^2&=\{\exists T\in Q_i^2:
   T\text{ is a local cut time between }t_i-{a_n^{(q_1)}}
   \text{ and }t_i\},\\
{LC}_i^1&=\{\exists T\in Q_i^1:
   T\text{ is a local cut time between }0\text{ and }t_i-{a_n^{(q_1)}}\}.
\end{aligned}
\]
By the standard absence-of-local-cut-times estimate, and by independence of the
pieces defining ${LC}_i^1$ and ${LC}_i^2$,
\begin{equation}
\label{eq:two-local-cut-fail-revised}
   \mathbb{P}(({LC}_i^1)^c\cap({LC}_i^2)^c)
   \le C\frac{(\log\log n)^{C_0}}{(\log n)^2}.
\end{equation}
Assume first that ${LC}_i^2$, $\Omega_i^2$, and $\{i\in\Lambda_2\}$ occur, and let
$T\in Q_i^2$ be the first local cut time given by ${LC}_i^2$.  If $T$ is not global,
then one of the following intersections occurs:
\[
\begin{array}{ll}
\mathrm{(a)}&S[0,t_i-{a_n^{(q_1)}}]\cap S[t_i-2 a_n^{(q_2)},t_i]\ne\emptyset,\\
\mathrm{(b)}&S[t_i-{a_n^{(q_1)}},t_i-{a_n^{(q_2)}}]\cap S[t_i,n]\ne\emptyset,\\
\mathrm{(c)}&S[0,t_i-{ a_n^{(q_1)}}]\cap S[t_i,n]\ne\emptyset.
\end{array}
\]
On $\Omega_i^2$, event (b) is impossible.  On $\{i\in\Lambda_2\}\cap\Omega_i^2$,
event (c) is impossible by~\eqref{eq:lambda2-separation-revised}.  Thus only (a)
remains.  Dividing $S[0,t_i-{a_n^{(q_1)}}]$ into pieces of length $2{a_n^{(q_2)}}$ and using
\eqref{Eq14},
\[
\mathbb{P}\bigl(S[0,t_i-{a_n^{(q_1)}}]\cap S[t_i-2{a_n^{(q_2)}},t_i]
\ne\emptyset\bigr)
\le
\sum_{\ell\ge1}
\frac{C}{({a_n^{(q_1)}}(2{a_n^{(q_2)}})^{-1}+\ell)^2\log n}
\le C\frac{{a_n^{(q_2)}}}{a_n^{(q_1)}\log n},
\]
which is $O((\log n)^{-5/2})$ by our choice of $q_1,q_2$.

It remains to consider $(\Omega_i^2)^c$.  Decompose
\[
   S[t_i-h,t_i-{a_n^{(q_2)}}]
   =\bigcup_{\ell =1}^{h/a_n^{(q_2)}-1}S[t_i-(\ell+1)a_n^{(q_2)},t_i-\ell a_n^{(q_2)}]
\]
and
\[
   S[t_i,t_i+h]
   =\bigcup_{r =0}^{ h/a_n^{(q_2)}}S[t_i+r {a_n^{(q_2)}},t_i+(r+1){a_n^{(q_2)}}].
\]
Let $\chi(\ell,r)$ be the event 
 that $S[t_i - (\ell +1) a_n^{(q_2)}, t_i - \ell a_n^{(q_2)}]$ and  $S[t_i + r a_n^{(q_2)}, t_i + (r+1) a_n^{(q_2)}]$ intersect. 
Then
\[
   \mathbb{P}(\chi(\ell,r))
   \le \frac{C}{(\ell+r)^2\log n}.
\]
On $\chi(\ell,r)$ we look for a local cut time in
$[t_i-(\ell+3)a_n^{(q_2)},t_i-(\ell+2)a_n^{(q_2)}]$ in the interval $[t_i -(\ell +4) a_n^{(q_2)}, t_i - (\ell +1)a_n^{(q_2)}]$.   We call the event that there exists such a local cut time $LC_{i,(\ell,r)}$.  
The probability that this local
cut time does not exist is $O((\log\log n)^{C_0}/\log n)$, independently of
$\chi(\ell,r)$.  If the local cut time exists but is not global, then an additional
 intersection between the past and the future of this local cut time is forced. 
 Namely, there is either an intersection between $S[0, t_i - (\ell +3) a_n^{(q_2)}]$ and $S[t_i - (\ell + 2) a_n^{(q_2)},n]$ or $S[0,t_i -(\ell+2)a_n^{(q_2)}]$ and $S[t_i - (\ell +1) a_n^{(q_2)}, n]$. 
 One can freeze the segment $S[t_i -(\ell +1) a_n^{(q_2)}, n]$ and apply Lemma~\ref{lem:freezing-four} to show that the additional intersection gives an additional factor $C(\log\log n)^{C_0}/\log n$ relative to $\mathbb{P}(\chi(\ell,r))$.
Summing over $(\ell,r)$ gives an error $O((\log\log n)^{C_0}/(\log n)^2)$.

The analysis of ${LC}_i^1$ on $({LC}_i^2)^c$ is analogous.  On
$\Omega_i^1\cap\Omega_i^2\cap\{i\in\Lambda_2\}$, the local cut time in $Q_i^1$ is
deterministically global;  the only remaining event left to consider is the event $(LC_i^2)^c  \cap (\Omega_i^1)^c$. However, this can also be treated using the freezing lemma and shown to occur with probability less than $C\frac{(\log \log n)^{C_0}}{(\log n)^2}$. 
We now define $CT_{i,-}^{\Lambda_2}$ to be the union of the following events, with the associated local cut time also required to be global. 
The first is the event $(LC_i^2 \cap \Omega_i^2 \cap \{i \in \Lambda_2\})$ , the second is the event $ LC_i^1 \cap (LC_i^2)^c  \cap \Omega_i^1 \cap \Omega_i^2\cap \{ i \in \Lambda_2 \} $ , and the last is the union of events $\bigcup_{\ell,r} [\chi(\ell,r) \cap LC_{i,(\ell,r)}]$.  
In addition, we should also define $CT_{i,+}^{\Lambda_2} \subset A_i^+$ as well and set $CT_{i}^{\Lambda_2} := CT_{i,-}^{\Lambda_2} \cap CT_{i,+}^{\Lambda_2}$. 
Our previous estimates on the probabilities of the complements of these events, together with summing over
$i\le N_\varepsilon$, proves~\eqref{eq:cut-near-lambda2-revised}.
\end{proof}

\subsection{Local cost estimates}
\label{subsec:local-cost-estimates}

For $1\le i\le N_\varepsilon-1$, define
\[
   T_i^-:=\max\{T\le t_i:T\text{ is a global cut time}\},
   \qquad
   T_i^+:=\min\{T\ge t_i:T\text{ is a global cut time}\},
\]
whenever these times exist.  On $A_i$ they are well-defined.  Set
\begin{equation}
\label{eq:def-W-revised}
   W:=\sum_{i\in\Lambda_2}
   {\bf 1}_{A_i}\left[
   X[T_i^-,t_i]
   +X[t_i,T_i^+]
   \right].
\end{equation}

We use the following uniform local-cost estimate.  

\begin{lemma}
\label{lem:local-max}
There exist constants $C_1,C_2<\infty$, depending only on $\varepsilon$ and $\delta$, such that
\begin{equation}
\label{eq:maxbnd-revised}
\mathbb{P}\left(
\exists 1\le j\le M:
\max_{u_{j-1}\le r\le s\le u_j}X[r,s]> C_1\mathbb{E}[\Delta_n]
\right)
\le C_2(\log n)^{-1}.
\end{equation}
Consequently, outside an event of probability $O((\log n)^{-1})$,
\begin{equation}
\label{eq:maxbndcor-revised}
\max_{u_{j-3}\le r\le s\le u_j}X[r,s]
\le C'_1\mathbb{E}[\Delta_n]
\qquad\text{for all }3\le j\le M.
\end{equation}
Moreover, if we fix $l,q>0$,  
then for every $p>0$ there is
$K=K(p,l,q,\varepsilon,\delta)$ such that
\begin{equation}
\label{eq:extendedbnd-revised}
\mathbb{P}\left(
\exists r\le M:
\max_{u_r- l a_n^{(q)} \le s\le t\le u_r+{ la_n^{(q)}}}
X[s,t]
> K\mathbb{E}[X_{l a_n^{(q)}}]
\right)
\le (\log n)^{-p}.
\end{equation}
\end{lemma}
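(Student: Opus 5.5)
The plan is to reduce everything to the trivial bound $X[r,s]\le s-r$ together with a single-block tail estimate, and then take a union bound over the $M=\delta^{-1}\varepsilon^{-1}\log n$ small blocks. The first step is a monotonicity reduction. For $u_{j-1}\le r\le s\le u_j$, the graph $G_{r,s}$ is a subgraph of $G_{u_{j-1},u_j}$, but $X[r,s]$ is not monotone in the interval, so we cannot simply compare with $X[u_{j-1},u_j]$. Instead we use the deterministic bound
\[
X[r,s]\le d_{G_{r,s}}(S_r,S_s)\le \sum_{m} X[\text{pieces}],
\]
obtained by splitting $[r,s]$ along a fine dyadic grid of mesh $h2^{-k}$, $0\le k\le K$. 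This is a standard chaining argument: any subinterval $[r,s]$ of $J_j$ is, up to two leftover segments of length at most $h2^{-K}$, a union of at most two dyadic intervals at each level $k$. By the triangle inequality for graph distance (and the series law with Rayleigh monotonicity for $R$),
\[
X[r,s]\le 2\sum_{k=0}^{K}\max_{I\in\mathcal D_k(J_j)}X(I)+2h2^{-K}.
\]

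The next step is to choose $K$ and bound the dyadic levels. We take $K=\lfloor C\log\log n\rfloor$ so that $h2^{-K}\le h(\log n)^{-10}\ll \mathbb E[\Delta_n]$. For a dyadic interval $I$ of length $h2^{-k}$, the slow variation \eqref{eq:psi-slow-variation} gives $\mathbb E X(I)\asymp 2^{-k}\mathbb E[\Delta_n]$. We then need the tail bound
\[
\mathbb P\bigl(X[0,m]>\lambda\,\mathbb E X_m\bigr)\le C(\log n)^{-p}
\]
for a suitable fixed large $\lambda$, with $m\ge n(\log n)^{-A}$. This comes from \cite[Lemma~3.4]{ShiraishiWatanabe2026}, exactly as it was used in Lemma~\ref{lem:one-scale-second-moment}, which gave probability $(\log n)^{-3}$ for $\lambda=K$; for $R$ we use $R\le D$. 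With this bound, level $k$ contributes at most $\lambda 2^{-k}\mathbb E[\Delta_n]$ except on an event of probability $2^k C(\log n)^{-3}$. Summing over $k\le K$ and over the $M$ blocks, the total bad probability is $M\cdot 2^{K}(\log n)^{-3}\le C(\log n)^{-1}$ provided $2^K\le(\log n)$, that is $C\le 1/\log 2$. Alternatively, for levels $k\ge k_0$ we can replace the probabilistic bound by a cruder tail with an exponent $p$ large. The deterministic sum $\sum_k \lambda 2^{-k}\le 2\lambda$ then gives \eqref{eq:maxbnd-revised} with $C_1=4\lambda+1$.

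The remaining claims follow quickly. Estimate \eqref{eq:maxbndcor-revised} follows by concatenating three consecutive blocks, since any $[r,s]\subset[u_{j-3},u_j]$ splits into at most three pieces each lying in one block, so $C_1'=3C_1$. Estimate \eqref{eq:extendedbnd-revised} is the same chaining argument applied to the window $[u_r-la_n^{(q)},u_r+la_n^{(q)}]$, with base scale $la_n^{(q)}$. Here we need tail bounds of order $(\log n)^{-p-2}$ for arbitrary $p$, so that they survive the union over $M\cdot 2^K$ intervals.

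The main obstacle is precisely this high-moment tail bound for $X_m/\mathbb E X_m$, which is needed at polylog probability with arbitrary power. My first attempt would be to iterate the Shiraishi--Watanabe estimate via subadditivity. Splitting $[0,m]$ into $L$ independent subblocks gives $X_m\le\sum X(\text{sub})$, and the event that the sum exceeds $\lambda L\,\mathbb E X_{m/L}$ requires many subblocks to be large. Independence then yields a probability of order $\binom{L}{L/2}((\log n)^{-3})^{L/2}$, which is at most $(\log n)^{-p}$ once $L\asymp p$. Finally, slow variation gives $L\,\mathbb E X_{m/L}\asymp \mathbb E X_m$, which completes the tail bound.
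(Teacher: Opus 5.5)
Your chaining reduction for \eqref{eq:maxbnd-revised} and \eqref{eq:maxbndcor-revised} works. It is a different route from the paper, which quotes \eqref{eq:quoted-max-tail}, a tail bound that already contains the maximum over subintervals with arbitrary polylogarithmic power, and then takes a union bound. You do need to fix the choice of $K$, since you cannot have both $2^{-K}\le(\log n)^{-10}$ and $2^{K}\le\log n$. You only need $h2^{-K}\lesssim\mathbb{E}[\Delta_n]\asymp h(\log n)^{-1/2}$, so $2^K\asymp(\log n)^{1/2}$ suffices. Then every dyadic length is at least $n(\log n)^{-2}$, and the union bound with the $(\log n)^{-3}$ input is $CM2^{K}(\log n)^{-3}\le C(\log n)^{-3/2}$.

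The genuine gap is the arbitrary-power tail bound you need for \eqref{eq:extendedbnd-revised}. With $L\asymp p$ sub-blocks, it is not true that $\sum_i X(\mathrm{sub}_i)>\lambda L\,\mathbb{E}X_{m/L}$ forces many sub-blocks to be large. The only deterministic control on a single sub-block is $X(\mathrm{sub}_i)\le m/L$. For bounded $L$ this exceeds the threshold $\lambda L\,\mathbb{E}X_{m/L}\asymp\lambda m(\log m)^{-1/2}$ by a factor of order $(\log m)^{1/2}$. So one sub-block exceeding $\lambda L$ times its mean already suffices. Your input says nothing better than $C(\log n)^{-3}$ about that event, so the argument yields $CL(\log n)^{-3}$, not $(\log n)^{-p}$.

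The idea is repaired by letting $L$ grow: take $L\asymp(\log m)^{1/2}$, so that $m/L\asymp\mathbb{E}X_m$. Call a sub-block bad if $X(\mathrm{sub}_i)>2\,\mathbb{E}X_{m/L}$.
\begin{enumerate}
\item By \eqref{eq:psi-slow-variation}, the good sub-blocks contribute at most $2L\,\mathbb{E}X_{m/L}=(2+o(1))\mathbb{E}X_m$.
\item Fewer than $j$ bad sub-blocks contribute at most $j\,m/L\le Cj\,\mathbb{E}X_m$.
\item Independence gives $\mathbb{P}(\text{at least } j \text{ bad})\le\binom{L}{j}\bigl(C(\log n)^{-3}\bigr)^j\le\bigl(C(\log n)^{-5/2}\bigr)^j$, which beats any power of $\log n$ once $j=j(p)$ is large.
\end{enumerate}
Since the two boundary pieces of any $[s,t]$ inside the window are trivially bounded by $m/L$, this gives the maximal bound directly, with no chaining needed. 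Alternatively, quote the maximal estimate with arbitrary $p$ (Shiraishi--Watanabe, Lemma~3.4 and Remark~3.5), which is what the paper does.
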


\begin{proof}
We use the maximum upper-tail estimate for four-dimensional trace distances proved in
\cite[Lemma~3.4 and Remark~3.5]{ShiraishiWatanabe2026}.  In the form needed here, it says
that for every fixed $A>0$ and $p>0$ there is $K=K(A,p)$ such that uniformly over all time
intervals $J\subset[0,n]$ with length $|J|=m\in[n(\log n)^{-A},n]$,
\begin{equation}
\label{eq:quoted-max-tail}
\mathbb P\left(
\max_{r,s\in J,\ r\le s}X[r,s]
>K m(\log m)^{-1/2}
\right)
\le C(\log n)^{-p-2}.
\end{equation}
We apply this with $h=\delta\varepsilon n(\log n)^{-1}$ and with $A=2$.
By~\eqref{eq:small-block-cost-revised},
\begin{equation*}
%\label{eq:compare-h-mean-localmax}
        \mathbb E[\Delta_n]
        \asymp h(\log h)^{-1/2}
        \asymp h(\log n)^{-1/2},
\end{equation*}
where the constants may depend on $\varepsilon$ and $\delta$.  Taking $K$ sufficiently large
in~\eqref{eq:quoted-max-tail} and summing over $M=\delta^{-1}\varepsilon^{-1}\log n$
small blocks gives
\[
\sum_{j=1}^{M}
\mathbb P\left(
\max_{u_{j-1}\le r\le s\le u_j}X[r,s]
>C_1\mathbb E[\Delta_n]
\right)
\le C(\log n)^{-1},
\]
which proves~\eqref{eq:maxbnd-revised}.

If $u_{j-3}\le r\le s\le u_j$, the interval $[r,s]$ is contained in the union of at most three
consecutive small blocks. In the complement of the event in~\eqref{eq:maxbnd-revised}, the graph distance
between $S_r$ and $S_s$ is therefore bounded by the sum of at most three corresponding
 maxima within the block.  Enlarging $C_1$ gives~\eqref{eq:maxbndcor-revised}.

For~\eqref{eq:extendedbnd-revised}, apply~\eqref{eq:quoted-max-tail} with
$m=2l a_n^{(q)}$ and with $p$ replaced by $p+3$.  
{The number of centers $r\le M$ is
$O_{\varepsilon,\delta}(\log n)$, and
\[
\mathbb E[X_{l a_n^{(q)}}]
\asymp la_n^{(q)}(\log l a_n^{(q)})^{-1/2}
\]
by~\eqref{Eq2}.}\  Increasing $K$ and taking a union bound over the $O(\log n)$ centers gives
\eqref{eq:extendedbnd-revised}.
\end{proof}
Our next lemma demonstrates that potential errors in computing the fluctuation of the graph distance due to local segments near points in $\Lambda_2$ will be negligible.
\begin{lemma}
\label{lem:W-one-point}
There exists $C<\infty$ such that for every $1\le i\le N_\varepsilon-1$,
\begin{align}
\label{eq:W-one-point-revised}
\mathbb{E}\left[
{\bf 1}_{A_i}{\bf 1}_{\{i\in\Lambda_2\}}{\bf 1}_{CT_i^{\Lambda_2}}
\left\{
X[T_i^-,t_i]
+
X[t_i,T_i^+]
\right\}
\right]
\le
C\delta\varepsilon n(\log n)^{-5/2}.
\end{align}
\end{lemma}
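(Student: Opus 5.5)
By symmetry (time reversal) it suffices to bound $\mathbb{E}[{\bf 1}_{A_i}{\bf 1}_{\{i\in\Lambda_2\}}{\bf 1}_{CT_i^{\Lambda_2}}X[T_i^-,t_i]]$; the term with $X[t_i,T_i^+]$ is identical. The target $\delta\varepsilon n(\log n)^{-5/2}$ equals $\mathbb{E}[\Delta_n]/\log n$ up to constants, by \eqref{eq:small-block-cost-revised}. The plan is therefore to show that the segment $[T_i^-,t_i]$ is typically microscopically short, namely of length $O(a_n^{(q_2)})$. Only on an event of probability $O((\log n)^{-1})$ times a block cost can it reach length of order $h$.

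I would decompose $CT_{i,-}^{\Lambda_2}$ according to the three pieces used in its construction in Lemma~\ref{lem:cut-near-lambda2}.

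\emph{First piece:} $LC_i^2\cap\Omega_i^2\cap\{i\in\Lambda_2\}$ with the local cut time global. Here $T_i^-\ge t_i-2a_n^{(q_2)}$. Hence $X[T_i^-,t_i]\le \max_{t_i-2a_n^{(q_2)}\le s\le t\le t_i}X[s,t]$. By \eqref{eq:extendedbnd-revised}, this is at most $K\mathbb{E}[X_{2a_n^{(q_2)}}]\le C n(\log n)^{-q_2-1/2}$ outside an event of probability $(\log n)^{-p}$. On that exceptional event I use the trivial bound $X\le n$ and take $p$ large. Since $q_2>q_1+3/2>4$, this piece is far below $n(\log n)^{-5/2}$.

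\emph{Second piece:} $LC_i^1\cap(LC_i^2)^c\cap\Omega_i^1\cap\Omega_i^2$. Here $T_i^-\ge t_i-3a_n^{(q_1)}$, so the same argument gives a bound $Cn(\log n)^{-q_1-1/2}$. Since $q_1>5/2$, this piece is also $o(n(\log n)^{-5/2})$. One could further multiply by $\mathbb{P}((LC_i^2)^c)$, but this is not needed.

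\emph{Third piece:} $\bigcup_{\ell,r}[\chi(\ell,r)\cap LC_{i,(\ell,r)}]$. This is the main term. On it, $T_i^-\ge t_i-(\ell+3)a_n^{(q_2)}$ with $\ell\le h/a_n^{(q_2)}$, so $T_i^-\ge t_i-h-3a_n^{(q_2)}$. The cost is then bounded by the maximum over at most two consecutive small blocks plus a boundary piece. By \eqref{eq:maxbndcor-revised}, outside an event of probability $O((\log n)^{-1})$, $X[T_i^-,t_i]\le C\mathbb{E}[\Delta_n]$. A sharper bound is available: on $\chi(\ell,r)$ the cost is at most the trace distance over a window of length $(\ell+3)a_n^{(q_2)}$, which is of order $(\ell+3)a_n^{(q_2)}(\log n)^{-1/2}$. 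Summing
\[
\sum_{\ell,r}\frac{C}{(\ell+r)^2\log n}\,(\ell+3)a_n^{(q_2)}(\log n)^{-1/2}
\le \frac{C}{\log n}\sum_{\ell\le h/a_n^{(q_2)}}\frac{(\ell+3)a_n^{(q_2)}}{\ell}(\log n)^{-1/2}
\le C\frac{h(\log n)^{-1/2}}{\log n}
\]
gives $C\delta\varepsilon n(\log n)^{-5/2}$, as required.

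Making the window bound on $X$ rigorous uniformly in $\ell$ is done by applying \eqref{eq:quoted-max-tail} on each dyadic window length, with a union bound over $\ell$; the number of windows is polylogarithmic. The exceptional events are absorbed by Cauchy–Schwarz together with the crude bound $X\le h+3a_n^{(q_2)}$ (the segment has length at most that). Care is needed, because the bad events from \eqref{eq:maxbnd-revised} have probability only $(\log n)^{-1}$. Multiplying this by the crude bound $h$ gives $h(\log n)^{-1}$, which is not small enough. I would therefore instead use \eqref{eq:quoted-max-tail} with large $p$, which it permits, at scale $h$, so that the failure probability becomes $(\log n)^{-p}$.

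The main obstacle is that $\chi(\ell,r)$ and the cost $X$ over the same window are not independent. I would handle this by conditioning on the window path and invoking the high-probability uniform cost bound rather than a product of expectations. This is legitimate because \eqref{eq:quoted-max-tail} holds deterministically off a tiny event.
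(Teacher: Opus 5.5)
Your proposal is correct and follows essentially the same route as the paper. You split $CT_{i,-}^{\Lambda_2}$ into its three constituent events, and you work on a high-probability local maximal event built from \eqref{eq:quoted-max-tail} with large $p$ over dyadic windows. In the main case you sum $(\ell+3)a_n^{(q_2)}(\log n)^{-1/2}\,\mathbb P(\chi(\ell,r))$ over $(\ell,r)$ to get $Ch(\log n)^{-3/2}$. The only cosmetic difference is how the exceptional event is absorbed: the paper uses the crude bound $X\le n$, you use $X\le h+3a_n^{(q_2)}$, and neither needs Cauchy--Schwarz.
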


\begin{proof}
We prove the estimate for the left-hand term
\[
W_i^-:={\bf 1}_{A_i}\mathbf 1_{\{i\in\Lambda_2\}}\mathbf 1_{{CT_i^{\Lambda_2}}}
 X[T_i^-,t_i],
\]
since the right-hand term is treated in the same way by time reversal.  We use the notation
of Lemma~\ref{lem:cut-near-lambda2}  as well as many elements of its analysis.  Recall  $a_n^{(q_1)}$ and $a_n^{(q_2)}$, 
where
 $q_1>5/2$, $q_2>q_1+3/2$ and 
$h=\delta\varepsilon n(\log n)^{-1}$. 
Put
\[
H:=\left\lceil \frac{h}{{a_n}^{(q_2)}} \right\rceil .
\]
We first record the local maximal event that will be used below.  By the maximal estimate
in Lemma~\ref{lem:local-max}, applied on dyadic intervals with lengths between $a_n^{(q_2)}$ and $h+2 a_n^{(q_2)}$, there is an event $\mathcal M_i$ such that for any prescribed
$p>0$,
\begin{equation}
\label{eq:Mi-probability-for-W}
\mathbb P(\mathcal M_i^c)\le C(\log n)^{-p},
\end{equation}
and on $\mathcal M_i$ the following estimates hold:
\begin{equation}
\label{eq:Mi-h-bound-for-W}
\max_{t_i-h-2 a_n^{(q_2)}\le s\le t\le t_i}
 X[s,t]
 \le C h(\log n)^{-1/2},
\end{equation}
and, for every $1\le \ell\le H$,
\begin{equation}
\label{eq:Mi-ell-bound-for-W}
\max_{t_i-(\ell+3)a_n^{(q_2)}\le s\le t\le t_i}
 X[s,t]
 \le C \ell  a_n^{(q_2)}(\log n)^{-1/2}.
\end{equation}
Here the constant $C$ is independent of $i,n,\varepsilon,\delta$ once $q_1,q_2$ are fixed;
taking $p$ sufficiently large in \eqref{eq:Mi-probability-for-W}, 
we have
\begin{equation*}
%\label{eq:Wi-Mi-complement}
\mathbb E[W_i^-;\mathcal M_i^c]
\le n\mathbb P(\mathcal M_i^c)
=o\bigl(\delta\varepsilon n(\log n)^{-5/2}\bigr),
\end{equation*}
for fixed $\varepsilon$ and $\delta$.  It is therefore enough to estimate
$\mathbb E[W_i^-;\mathcal M_i]$.

We split according to the construction of the cut time in Lemma~\ref{lem:cut-near-lambda2}.
First assume that 
%\[
 $LC_i^2\cap \Omega_i^2\cap\{i\in\Lambda_2\}$
%\]
occurs and the obtained local cut time is also a global cut time.  
When we further impose the event $\mathcal{M}_i$, we see that 
\begin{equation*}
%\label{eq:Wi-E2-good}
W_i^-
\le C a_n^{(q_2)}(\log n)^{-1/2}
= C n(\log n)^{-q_2-1/2}
=o\bigl(\delta\varepsilon n(\log n)^{-5/2}\bigr).
\end{equation*}

Now, suppose that the event $LC_i^1 \cap (LC_i^2)^c \cap \Omega_i^1 \cap \Omega_i^2 \cap \{ i \in \Lambda_2\}$ occurs and the resulting local cut time is also a global cut time. In this case,  the cut time lies within distance at most $3a_n^{(q_1)}$ from $t_i$.  
Thus, on $\mathcal M_i$,
\begin{equation*}
%\label{eq:Wi-E1-good}
W_i^-
\le C a_n^{(q_1)}(\log n)^{-1/2}
= C n(\log n)^{-q_1-1/2}
=o\bigl(\delta\varepsilon n(\log n)^{-5/2}\bigr),
\end{equation*}
since $q_1>5/2$. 
It remains to treat the case where the events $\chi(l,r)\cap  LC_{i,(\ell,r)}$ occur and the associated local cut time is global. Here, the global cut time is at distance at most $C\ell a_n^{(q_2)}$ from $t_i$. 

Using
\eqref{eq:Mi-ell-bound-for-W}, we obtain
\begin{align}
\label{eq:Omega2c-W-sum}
\mathbb E[W_i^-;\mathcal M_i,(\Omega_i^2)^c]
&\le C\sum_{\ell=1}^{H}\sum_{r=0}^{H}
\ell a_n^{(q_2)}(\log n)^{-1/2}
\frac{1}{(\ell+r)^2\log n}.
\end{align}
For each $\ell\ge1$,
\[
\sum_{r=0}^{H}\frac{\ell}{(\ell+r)^2}\le C,
\]
and hence
\[
\sum_{\ell=1}^{H}\sum_{r=0}^{H}\frac{\ell}{(\ell+r)^2}\le CH.
\]
Since $Ha_n^{(q_2)}\le Ch=C\delta\varepsilon n(\log n)^{-1}$, 
the right-hand side of 
\eqref{eq:Omega2c-W-sum} is at most
\[
C Ha_n^{(q_2)}(\log n)^{-3/2}
\le C\delta\varepsilon n(\log n)^{-5/2}.
\]
Combining the preceding estimates gives
\[
\mathbb E[W_i^-]
\le C\delta\varepsilon n(\log n)^{-5/2}.
\]
The same estimate for the right-hand term follows by time reversal, and the lemma follows.
\end{proof}

\begin{lemma}
\label{lem:W-tail}
There exists $C<\infty$ such that
\begin{equation*}
%\label{eq:W-tail-revised}
   \mathbb{P}\bigl(W\ge C\sqrt\delta\,n(\log n)^{-3/2}\bigr)
   \le C\sqrt\delta+o_n(1).
\end{equation*}
\end{lemma}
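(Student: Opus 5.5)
The plan is a first-moment bound combined with Markov's inequality, after discarding a bad event of small probability. Let
\[
\mathcal B:=\bigl\{\exists i\in\Lambda_2:(CT_i^{\Lambda_2})^c\bigr\}.
\]
By Lemma~\ref{lem:cut-near-lambda2}, $\mathbb P(\mathcal B)\le C(\log\log n)^{C_0}/\log n=o_n(1)$. On $\mathcal B^c$, every $i\in\Lambda_2$ satisfies $CT_i^{\Lambda_2}\subset A_i$. Hence
\[
W\mathbf 1_{\mathcal B^c}\le \widetilde W:=\sum_{i=1}^{N_\varepsilon-1}\mathbf 1_{A_i}\mathbf 1_{\{i\in\Lambda_2\}}\mathbf 1_{CT_i^{\Lambda_2}}\bigl\{X[T_i^-,t_i]+X[t_i,T_i^+]\bigr\}.
\]
This random variable is exactly the one controlled summand by summand in Lemma~\ref{lem:W-one-point}.

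Summing \eqref{eq:W-one-point-revised} over the $N_\varepsilon-1\le\varepsilon^{-1}\log n$ macro endpoints gives
\[
\mathbb E[\widetilde W]\le \varepsilon^{-1}\log n\cdot C\delta\varepsilon n(\log n)^{-5/2}=C\delta\, n(\log n)^{-3/2}.
\]
The factor $\varepsilon$ cancels, which is the key point. This uses that the constant in Lemma~\ref{lem:W-one-point} is uniform in $i$, and I would check that its dependence on $\varepsilon,\delta$ is only through the explicit prefactor $\delta\varepsilon$. The constant $C$ in the $\mathcal M_i$ bounds is stated to be independent of $\varepsilon,\delta$, and the other contributions were shown to be $o(\delta\varepsilon n(\log n)^{-5/2})$ for fixed $\varepsilon,\delta$. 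Those $o(\cdot)$ terms are absorbed into $o_n(1)$ after Markov's inequality.

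Markov's inequality then yields
\[
\mathbb P\bigl(\widetilde W\ge C\sqrt\delta\,n(\log n)^{-3/2}\bigr)\le \frac{C\delta\,n(\log n)^{-3/2}}{C\sqrt\delta\,n(\log n)^{-3/2}}\le C\sqrt\delta .
\]
Combining this with the bound on $\mathbb P(\mathcal B)$,
\[
\mathbb P\bigl(W\ge C\sqrt\delta\,n(\log n)^{-3/2}\bigr)\le \mathbb P(\mathcal B)+\mathbb P\bigl(\widetilde W\ge C\sqrt\delta\,n(\log n)^{-3/2}\bigr)\le C\sqrt\delta+o_n(1).
\]

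The main obstacle is bookkeeping rather than a new estimate. First, I must make sure the indicator $\mathbf 1_{A_i}$ in the definition \eqref{eq:def-W-revised} of $W$ can be replaced by $\mathbf 1_{A_i}\mathbf 1_{CT_i^{\Lambda_2}}$ off $\mathcal B$. This holds because $\mathcal B^c$ forces $CT_i^{\Lambda_2}$ for every $i\in\Lambda_2$. Second, I must make sure the $o(\cdot)$ remainders in Lemma~\ref{lem:W-one-point}, which come from $\mathcal M_i^c$ and the $q_1,q_2$ cases, stay negligible after summing $O(\log n)$ terms. Since each of them is $O(n(\log n)^{-p})$ for arbitrarily large $p$, or $O(n(\log n)^{-q_1-1/2})$ with $q_1>5/2$, their sum is $o(n(\log n)^{-3/2})$. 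So they contribute only $o_n(1)$ to the Markov bound, and the proof is complete.
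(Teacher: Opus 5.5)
Your proposal is correct and matches the paper's proof. The paper's event $B$ is exactly your $\mathcal B^c$. Like the paper, you discard it via Lemma~\ref{lem:cut-near-lambda2}, sum Lemma~\ref{lem:W-one-point} over the $N_\varepsilon-1$ macro endpoints to get a first-moment bound $C\delta\, n(\log n)^{-3/2}$ in which $\varepsilon$ cancels, and conclude with Markov's inequality. Your extra bookkeeping on the uniformity of the constant in $\varepsilon,\delta$ and on the $o(\cdot)$ remainders makes explicit what the paper leaves implicit.
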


\begin{proof}
{ 
Let $B=\{CT_i \cap A_i\text{ occurs for every }i\in\Lambda_2\}$.}

By Lemma~\ref{lem:cut-near-lambda2}, $\mathbb{P}(B^c)=o_n(1)$ for fixed
$\varepsilon,
\delta$.  Summing~\eqref{eq:W-one-point-revised} over
$1\le i\le N_\varepsilon-1$ gives
\[
   \mathbb{E}[W\mathbf 1_B]
   \le C\delta n(\log n)^{-3/2}.
\]
Markov's inequality proves the result.
\end{proof}

For an edge $e=(j,k)$, let $t_a,\ldots,t_b$ be the macro-block times between $u_j$ and $u_{k-1}$. With this we can define the cost
\begin{equation}
\label{eq:def-middle-cost}
  D(j,k):= X[u_j,t_a] + \sum_{l =a}^{b-1}X[t_l,t_{l+1}] + X[t_b, u_{k-1}].
   %\sum_{r=j+1}^{k-1}d_{G_{u_{r-1},u_r}}(S_{u_{r-1}},S_{u_r}).
\end{equation}

This definition avoids boundary ambiguities at macro endpoints.  It differs from the
exact cost bypassed by the shortcut only by $O(\mathbb{E}[\Delta_n])$, uniformly in the
edge, on the local maximal event of Lemma~\ref{lem:local-max}.

We shall use the following local-cost replacement estimate.  Local costs carried by a
prescribed non-overlapping family of long edges may be replaced by their means, with an error negligible on the scale $n(\log n)^{-3/2}$. 
\begin{lemma}
\label{lem:Djk-concentration}
Let $\eta>0$ be sufficiently small.  Then, for fixed $\varepsilon$ and $\delta$,
\[
\mathbb{P}\left(
\left|
\sum_{(j,k)\in \mathcal E_{\mathrm{RW}}}
\left\{
D(j,k)-(k-j-1)\mathbb{E}[\Delta_n]
\right\}
\right|
\ge
n(\log n)^{-3/2-\eta}
\right)
=o_n(1).
%\tag{3.39}
\label{eq:Djk-concentration-sum}
\]
\end{lemma}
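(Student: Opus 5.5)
We reduce the statement to a second-moment bound for a sum of edge costs, conditioned on the edge set being a fixed non-overlapping family.

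\textbf{Step 1: reduction to fixed edge families.} First intersect with the good events already available:
\begin{itemize}
\item the non-overlap event $\mathcal O$ from Lemma~\ref{lem:non-overlap};
\item the event that no long edge has length larger than a cutoff $R$, from \eqref{eq:edge-tail-basic};
\item the event that $|\mathcal E_{\mathrm{RW}}|\le K$ for a large constant $K$, which by \eqref{eq:weak-total-weight} has probability at least $1-C/K$.
\end{itemize}
We let $n\to\infty$ first and then $R,K\to\infty$. On these events the sum runs over at most $K$ edges, each of length at most $R$. It therefore suffices to control, for each fixed admissible non-overlapping family $F=\{e_1,\dots,e_m\}$ with $m\le K$ and $\ell(e_i)\le R$,
\[
\mathbb P\Big(\mathcal E_{\mathrm{RW}}=F,\ \Big|\sum_{e\in F}\big\{D(e)-(\ell(e)-1)\mathbb E[\Delta_n]\big\}\Big|\ge n(\log n)^{-3/2-\eta}\Big),
\]
and then sum over $F$.

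\textbf{Step 2: decoupling the middle costs from the edge events.} Each $D(j,k)$ is a sum of graph distances over intervals contained in $[u_j,u_{k-1}]$. The event $E_{(j,k)}$ is measurable with respect to $S(J_j)$ and $S(J_k)$, up to the increment between them. These two $\sigma$-fields are not independent, but they are nearly so: conditioning on the middle segment $S[u_j,u_{k-1}]$ changes the intersection probability of $J_j$ and $J_k$ only through the displacement $S_{u_{k-1}}-S_{u_j}$. Following the argument in the proof of Lemma~\ref{lem:internalcutpoint} (taking a supremum over $x$ in the estimate of \cite[Proposition~2.3]{ShiraishiWatanabe2026}), one gets the bound
\[
\mathbb P(E_e\mid S[u_j,u_{k-1}])\le \frac{C_R}{\log n}
\]
uniformly in the middle segment. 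Hence
\[
\mathbb P\big(E_e,\ |D(e)-\mathbb E D(e)|\ge x\big)\le \frac{C_R}{\log n}\,\mathbb P\big(|D(e)-\mathbb E D(e)|\ge x\big).
\]
For several disjoint edges, the same bound holds after iterating the Markov property over the disjoint spans. The non-edge constraints (that $\mathcal E_{\mathrm{RW}}$ contains nothing else) are simply dropped as an upper bound.

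\textbf{Step 3: concentration of a single edge cost.} By Lemma~\ref{lem:variance-upper-four}, a piece of length $m\le Rh$ has standard deviation at most $Cm(\log n)^{-1}\le C_R\delta\varepsilon n(\log n)^{-2}$. This is much smaller than $n(\log n)^{-3/2-\eta}$ once $\eta<1/2$. The means also match: $\mathbb E D(j,k)=(k-j-1)\mathbb E[\Delta_n]+O(h(\log n)^{-1})$. This follows because the pieces of $D(j,k)$ are sums of full small blocks, plus macro-aligned pieces whose expectations agree with multiples of $\mathbb E[\Delta_n]$ up to relative error $(\log n)^{-\theta}$ by \eqref{eq:psi-slow-variation}. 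This relative error times $R\,\mathbb E[\Delta_n]$ is $o(n(\log n)^{-3/2-\eta})$ provided $\eta<\theta$. By Chebyshev and Step 2, each fixed edge contributes
\[
\frac{C_R}{\log n}\cdot (\log n)^{-1+2\eta},
\]
up to constants.

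\textbf{Step 4: summation over edges, and the main obstacle.} Summing over the $O_R(\log n)$ possible edges, via a union bound over which edge of the family carries a deviation of at least $x/K$, gives a total of $O((\log n)^{-1+2\eta})=o(1)$. The main obstacle is Step 2: making the conditional edge bound uniform in the middle path, and correctly handling the boundary pieces at macro endpoints $t_a,t_b$ within the convention of \eqref{eq:def-middle-cost}. I would first try to reuse the $\tau,\sigma$ last-intersection decomposition from the proof of Lemma~\ref{lem:internalcutpoint} with the middle segment frozen.
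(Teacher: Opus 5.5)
Your architecture matches the paper's. You truncate edge lengths and the number of edges, control each middle cost $D(j,k)$ by Chebyshev via Lemma~\ref{lem:variance-upper-four} (with the mean matched by slow variation or \eqref{Eq8}), gain a factor of order $1/\log n$ from the edge event, and union-bound over the $O(\log n)$ candidate edges. Fixed $R,K$ with $n\to\infty$ first is a fine substitute for the paper's $\Lambda_n=K_n=(\log n)^\eta$. The family-level conditioning, $\mathcal O$, and the iterated Markov property are never used, since Step~4 is a union bound over single edges.

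The gap is the key inequality of Step~2. The bound $\mathbb P(E_e\mid S[u_j,u_{k-1}])\le C_R/\log n$ \emph{uniformly in the middle segment} is false. Since $S_{u_j}\in S(J_j)$ and $S_{u_{k-1}}\in S(J_k)$, the event $E_e$ holds surely on $\{S_{u_{k-1}}=S_{u_j}\}$. More generally, write $y=S_{u_{k-1}}-S_{u_j}$. The two outer blocks are conditionally independent walks started at distance $|y|$. For $1\le|y|\ll\sqrt h$ their intersection probability is of order $\log(\sqrt h/|y|)/\log n$, and in particular of order one when $|y|=O(1)$. The event $B_e:=\{|D(e)-\mathbb E D(e)|\ge x\}$ is measurable with respect to the same middle segment, so nothing a priori prevents $B_e$ from favouring near-returns. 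Hence $\mathbb P(E_e\cap B_e)\le \frac{C_R}{\log n}\mathbb P(B_e)$ does not follow. The route you propose does not give it either. With all of $S[u_j,u_{k-1}]$ frozen, the $\sup_x$ term in the last-intersection decomposition becomes $\sup_x p_m(x)$ with $m=(u_j-\tau)+(\sigma-u_{k-1})$. This $m$ can be $O(1)$, and the sum over $(\tau,\sigma)$ is then only $O(1)$. (In Lemma~\ref{lem:internalcutpoint} the frozen piece is just the stretch of length $3h$ next to $J_j$, not the whole gap.)

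The missing ingredient is the buffer-plus-exceptional-set argument the paper uses. First, discard intersections involving $J_j^{\rm R}(q)$ or $J_k^{\rm L}(q)$, for all edges simultaneously and unconditionally, using Lemma~\ref{lem:prim}. Conditionally on $S[u_j,u_{k-1}]$, the remaining pieces $S[u_{j-1},u_j-a_n^{(q)}]$ and $S[u_{k-1}+a_n^{(q)},u_k]$ are independent walks. Their starting points differ by $y$ plus two fresh independent increments of length $a_n^{(q)}$. By the local central limit theorem, uniformly in $y$, these starting points are at distance at least $\sqrt n(\log n)^{-q'}$ except with probability $O((\log n)^{-p})$, once $q'$ is large. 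From such a distance the intersection probability is at most $C(\log\log n)^{C}/\log n$, by a standard two-walk estimate or Lemma~\ref{lem:freezing-four}. This gives
\[
\mathbb P\bigl(E_e\cap B_e\cap\{\text{no near-endpoint intersection}\}\bigr)\le C\frac{(\log\log n)^{C}}{\log n}\,\mathbb P(B_e)+C(\log n)^{-p}.
\]
With this corrected bound your Steps~3--4 go through. After summing over the $O_R(\log n)$ candidate edges, the total is $O\bigl((\log\log n)^{C}(\log n)^{-1+2\eta}+(\log n)^{1-p}+(\log n)^{-(q-1)}\bigr)\to0$.
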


\begin{proof}
Put
\[
T_n=n(\log n)^{-3/2-\eta},\qquad
\Lambda_n=(\log n)^\eta,
\qquad
K_n=(\log n)^\eta .
\]
Since \eqref{eq:edge-tail-basic} holds even if $R$ depends on $n$,  
\begin{equation}
\label{eq:Djk-long-edge-tail-new}
\mathbb P\bigl(\exists (j,k)\in \mathcal E_{\mathrm{RW}}: k-j>\Lambda_n\bigr)
\le C\Lambda_n^{-1}=o_n(1).
\end{equation}
Moreover, by \eqref{eq:basic-edge-prob},
\begin{align*}
%\label{eq:Djk-number-short-new}
\mathbb E\bigl[\#\{(j,k)\in \mathcal E_{\mathrm{RW}}:k-j\le \Lambda_n\}\bigr]
&\le \sum_{r=2}^{\Lambda_n} C\varepsilon^{-1}r\log n\,\frac{C}{r^2\log n} 
\le C_\varepsilon\log \Lambda_n.
\end{align*}
Therefore
\begin{equation}
\label{eq:Djk-too-many-new}
\mathbb P\bigl(\#\{(j,k)\in \mathcal E_{\mathrm{RW}}:k-j\le \Lambda_n\}>K_n\bigr)
\le \frac{C_\varepsilon\log \Lambda_n}{K_n}=o_n(1).
\end{equation}
It remains to control local-cost deviations for possible edges with length at most $\Lambda_n$. 
For the same fixed pair $(j,k)$ and $r=k-j$, define the local-cost bad event
\begin{equation*}
%\label{eq:Bcost-new}
B^{\rm cost}_{j,k}
:=\left\{
\left|D(j,k)-(r-1)\mathbb E[\Delta_n]\right|>\frac{T_n}{K_n}
\right\}.
\end{equation*}
This event is measurable with respect to the segment $S[u_j, u_{k-1}]$ of the random walk. 
Secondly, observe that $\mathbb{E}[D(j,k)] - (r-1) \mathbb{E}[\Delta_n] \ll \frac{T_n}{K_n}$ using equation \eqref{Eq8}; thus, it suffices to estimate the size of $D(j,k) - \mathbb{E}[D(j,k)]$. Notice, that $D(j,k)$ consists of a sum of independent terms, and the middle 
{$b-a = \delta(k-j) +O(1)$}  terms are independent. Up to a constant that depends on $\varepsilon$ and $\delta$, we can apply Lemma~\ref{lem:variance-upper-four} at the appropriate scale to deduce that 
\begin{equation}
\label{eq:Djk-var-new}
\operatorname{Var}(D(j,k))
\le Cr n^2(\log n)^{-4}.
\end{equation}
Chebyshev's inequality yields
\begin{equation}
\label{eq:Bcost-prob-new}
\mathbb P(B^{\rm cost}_{j,k})
\le Cr n^2(\log n)^{-4}\frac{K_n^2}{T_n^2}
\le CrK_n^2(\log n)^{-1+2\eta}.
\end{equation}
By applying Lemma \ref{lem:prim} to ensure that there is no intersection near the endpoints and applying the freezing lemma, we obtain that, 
\begin{equation*}
%\label{eq:edge-cost-bad-new}
\mathbb P\bigl((j,k)\in \mathcal E_{\mathrm{RW}},
B^{\rm cost}_{j,k}\bigr)
\le C\frac{(\log\log n)^C}{\log n}\,rK_n^2(\log n)^{-1+2\eta}
 + C(\log n)^{-p},
\end{equation*}
where $p$ can be made arbitrarily large.
The number of candidate pairs in $\mathcal P$ with $k-j=r$ is at most
$C\varepsilon^{-1}r\log n$.  Hence
\begin{align}
\label{eq:sum-cost-bad-new}
&\mathbb P\bigl(\exists (j,k)\in \mathcal E_{\mathrm{RW}}:k-j\le \Lambda_n,
B^{\rm cost}_{j,k}\bigr) \\
\notag
&\quad\le
\sum_{r=2}^{\Lambda_n}C\varepsilon^{-1}r\log n
\left[
C\frac{(\log\log n)^C}{\log n}\,rK_n^2(\log n)^{-1+2\eta}
+C(\log n)^{-p}
\right] \\
\notag
&\quad\le
C_\varepsilon(\log\log n)^C \Lambda_n^3K_n^2(\log n)^{-1+2\eta}
+C_\varepsilon \Lambda_n^2(\log n)^{1-p}.
\end{align}
Since $\Lambda_n=K_n=(\log n)^\eta$, the right-hand side is $o_n(1)$ if, for instance,
$\eta<1/8$ and $p$ is chosen sufficiently large.

On the event that all edge lengths are at most $\Lambda_n$, the number of such edges is at most
$K_n$, and no event $B^{\rm cost}_{j,k}$ occurs, we have
\[
\left|
\sum_{(j,k)\in \mathcal E_{\mathrm{RW}}}
\{D(j,k)-(k-j-1)\mathbb E[\Delta_n]\}
\right|
\le K_n\frac{T_n}{K_n}=T_n.
\]
Together with \eqref{eq:Djk-long-edge-tail-new}, \eqref{eq:Djk-too-many-new}, and
\eqref{eq:sum-cost-bad-new}, this proves the lemma.
\end{proof}

The next lemma demonstrates that the boundary errors we obtain around each of our edges $(j,k) \in \mathcal{E}_{RW}$ will contribute negligibly to the fluctuation. 

\begin{lemma}
\label{lem:edge-number}
There exists $C<\infty$ such that
\begin{equation*}
%\label{eq:edge-number-revised}
\mathbb{P}\bigl(\#\mathcal E_{\mathrm{RW}}\,\mathbb{E}[\Delta_n]
   \ge C\sqrt\delta\,n(\log n)^{-3/2}\bigr)
\le C\sqrt\delta\log(\delta^{-1})+o_n(1).
\end{equation*}
\end{lemma}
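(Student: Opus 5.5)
The natural route is a first-moment computation followed by Markov's inequality, with a truncation in edge length to handle the logarithmic divergence of the edge count. By \eqref{eq:small-block-cost-revised}, $\mathbb E[\Delta_n]\asymp \delta\varepsilon n(\log n)^{-3/2}$. The statement is therefore equivalent to showing that $\delta\varepsilon\,\#\mathcal E_{\mathrm{RW}}\ge C\sqrt\delta$, i.e.\ $\#\mathcal E_{\mathrm{RW}}\ge C\varepsilon^{-1}\delta^{-1/2}$, has probability at most $C\sqrt\delta\log(\delta^{-1})+o_n(1)$.

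The plan is to count candidate pairs in $\mathcal P$ by length $r=k-j$, as in the proof of Lemma~\ref{lem:prim}.
\begin{itemize}
\item If $r\le\delta^{-1}$, a pair of length $r$ must straddle one of the $N_\varepsilon-1$ macro endpoints, so there are at most $N_\varepsilon r$ of them.
\item If $r>\delta^{-1}$, there are at most $M=\delta^{-1}N_\varepsilon$ of them.
\end{itemize}
Combining these counts with \eqref{eq:basic-edge-prob} gives
\[
\mathbb E\#\{(j,k)\in\mathcal E_{\mathrm{RW}}:k-j\le \Lambda\}
\le \sum_{r=2}^{\delta^{-1}} \frac{C N_\varepsilon r}{r^2\log n}+\sum_{r>\delta^{-1}}^{\Lambda}\frac{C\delta^{-1}N_\varepsilon}{r^2\log n}
\le C\varepsilon^{-1}\bigl(\log(\delta^{-1})+1\bigr),
\]
uniformly in $\Lambda$. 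The second sum converges because $\delta^{-1}\sum_{r>\delta^{-1}}r^{-2}\le C$. Since $\mathcal P$ only contains pairs with $k\le M$, one may take $\Lambda=M$, so no separate tail estimate is needed.

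Markov's inequality at the threshold $C\varepsilon^{-1}\delta^{-1/2}$ then gives a probability bound $C\sqrt\delta\log(\delta^{-1})$. Multiplying back by $\mathbb E[\Delta_n]$ proves the claim, with the $o_n(1)$ absorbing the $O((\log n)^{-\theta})$ correction in \eqref{eq:small-block-cost-revised}. The constant $C$ may be taken independent of $\delta$. Any dependence on $\varepsilon$ cancels, because the factor $\varepsilon$ in $\mathbb E[\Delta_n]$ offsets the factor $\varepsilon^{-1}$ in the expected count.

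The main subtlety is the boundary edges mentioned before Lemma~\ref{lem:global-cut-near-edge}. These are intervals touching $0$ or $n$, together with small-block pairs not in $\mathcal P$ that the later shortcut approximation must also account for. If the lemma is meant to cover microblock edges not crossing a macro endpoint (as Remark~\ref{rem:overlapmicro} suggests), the count changes: there are then $M$ pairs of each length. The naive first moment becomes $\sum_r M/(r^2\log n)\asymp(\delta\varepsilon)^{-1}$, and the product with $\mathbb E[\Delta_n]$ is of order $n(\log n)^{-3/2}$ rather than $\sqrt\delta\, n(\log n)^{-3/2}$. In that case I would restrict to edges in $\mathcal P$, as the statement's use of $\mathcal E_{\mathrm{RW}}$ indicates, and treat the within-macro-block edges separately, since they are already contained in the costs $\hat X_i$. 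I would verify that only the $\mathcal P$-edges need counting, and that the boundary effect at $0,n$ contributes at most two extra terms of size $C\mathbb E[\Delta_n]$, which are negligible.
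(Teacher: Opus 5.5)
Your proof is correct and reaches the paper's bound $\mathbb E[\#\mathcal E_{\mathrm{RW}}]\le C\varepsilon^{-1}\log(\delta^{-1})$, but by a more direct route than the paper's.

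\textbf{The paper's route.} It works on the non-overlap event $\mathcal O$ and charges each long edge to a macro endpoint it crosses. This dominates $\#\mathcal E_{\mathrm{RW}}$ by
\[
\sum_i \mathbf 1\{S[t_{i-1},t_i-h]\cap S[t_i,n]\neq\emptyset\}
+\sum_i \mathbf 1\{S[0,t_i]\cap S[t_i+h,t_{i+1}]\neq\emptyset\}.
\]
Each of these indicators is bounded with Lawler's four-dimensional intersection estimate (Theorem 4.3.6). The argument then pays $\mathbb P(\mathcal O^c)=o_n(1)$ through Lemma~\ref{lem:non-overlap}, and hence through the freezing lemma.

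\textbf{Your route.} You use linearity of expectation over the deterministic candidate set $\mathcal P\supset\mathcal E_{\mathrm{RW}}$ together with the one-edge bound \eqref{eq:basic-edge-prob}, splitting at $r=\delta^{-1}$. The at most $N_\varepsilon r$ straddling pairs of length $r\le\delta^{-1}$ give the harmonic sum $\log(\delta^{-1})$. The at most $M$ pairs of each length $r>\delta^{-1}$ contribute only $O(\varepsilon^{-1})$. The extra $+1$ is absorbed because $\delta^{-1}\in\mathbb N$ with $\delta<1$ forces $\log(\delta^{-1})\ge\log 2$.

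\textbf{What each buys.} Your route needs no good event, no freezing, and no external input beyond \eqref{Eq14}; the $o_n(1)$ term only covers small $n$. It also avoids the paper's charging case analysis. As written, that analysis misses edges from $[t_c-h,t_c]$ to $[t_d,t_d+h]$ with $d>c$, which trigger neither indicator family. This is harmless, since such edges have expected number $O(\varepsilon^{-1}\delta^2)$, but your sum counts them automatically. The paper's version expresses the count through $2(N_\varepsilon-1)$ endpoint events, matching the macro-endpoint bookkeeping of the shortcut approximation. For this lemma alone, however, it gains nothing over your computation.

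Your closing caveat is also accurate. The statement concerns only $\mathcal P$-edges, and counting intra-macro-block micro edges would give an expected count of order $(\varepsilon\delta)^{-1}$, losing the $\sqrt\delta$ gain.
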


\begin{proof}
We work first on $\mathcal O$ defined by \eqref{eq:non-overlap-event-revised}.  On this event, each long edge can be charged to one
of the macro endpoints it crosses.  More precisely, if an edge crosses
$t_c,\ldots,t_d$, then it is detected either by an intersection between the left side
of $t_c$ and the future, or by an intersection between the past and the right side of
$t_d$.  Hence, on $\mathcal O$,
\begin{equation*}
%\label{eq:edge-count-charge-revised}
\#\mathcal E_{\mathrm{RW}}
\le
\sum_{i=1}^{N_\varepsilon-1}
\mathbf 1_{\{S[t_{i-1},t_i-h]\cap S[t_i,n]\ne\emptyset\}}
+
\sum_{i=1}^{N_\varepsilon-1}
\mathbf 1_{\{S[0,t_i]\cap S[t_i+h,t_{i+1}]\ne\emptyset\}}.
\end{equation*}
The expectation of the right-hand side is bounded by
$C\varepsilon^{-1}\log(\delta^{-1})$ by the four-dimensional intersection estimate of \cite[Theorem~4.3.6]{Lawler1991}.  Therefore
\[
\mathbb{E}[\#\mathcal E_{\mathrm{RW}}\mathbf 1_{\mathcal O}]
\le C\varepsilon^{-1}\log(\delta^{-1}).
\]
Using~\eqref{eq:small-block-cost-revised} and Markov's inequality,
\[
\mathbb{P}\bigl(\#\mathcal E_{\mathrm{RW}}\mathbb{E}[\Delta_n]
   \ge C\sqrt\delta\,n(\log n)^{-3/2};\mathcal O\bigr)
\le C\sqrt\delta\log(\delta^{-1}).
\]
Finally add $\mathbb{P}(\mathcal O^c)=o_n(1)$ from Lemma~\ref{lem:non-overlap}.
\end{proof}

\subsection{Approximating the shortcut loss}
\label{subsec:shortcut-approximation}

Define
\begin{equation*}
%\label{eq:def-L1-revised}
   L_{\mathrm{RW}}:=\sum_{(j,k)
\in\mathcal E_{\mathrm{RW}}}(k-j).
\end{equation*}
The next lemma is the main microscopic-to-mesoscopic reduction.

\begin{lemma}
\label{lem:shortcut-approximation}
For fixed $\varepsilon$ and $\delta$,
\begin{equation*}
%\label{eq:shortcut-approximation-revised}
\mathbb{P}\left(
\left|
\sum_{i=1}^{N_\varepsilon} \hat{X}_i-X_n
-
\mathbb{E}[\Delta_n]
\sum_{(j,k)\in\mathcal E_{\mathrm{RW}}}(k-j)
\right|
\ge C\sqrt\delta\,n(\log n)^{-3/2}
\right)
\le C\delta^{1/3}+o_n(1).
\end{equation*}
\end{lemma}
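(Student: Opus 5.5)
We work on the intersection of the good events already constructed: the non-overlap event $\mathcal O$ (Lemma~\ref{lem:non-overlap}), the cut-time events around edge endpoints (Lemmas~\ref{lem:global-cut-near-edge} and~\ref{lem:internalcutpoint}), the events $CT_i^{\Lambda_2}\cap A_i$ for $i\in\Lambda_2$ (Lemma~\ref{lem:cut-near-lambda2}), and the local maximal event of Lemma~\ref{lem:local-max}. Together with the endpoint-separation event of Lemma~\ref{lem:prim}, all of these hold with probability $1-o_n(1)$. On this event, the global cut times split the path into a concatenation of independent pieces. These pieces are of two kinds: (i) pieces straddling a macro endpoint $t_i$ with $i\in\Lambda_2$, and (ii) pieces containing a single long edge $(j,k)$, bounded by global cut times near $u_{j-1}$ and $u_k$. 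Since $X$ is additive across global cut times for both $D$ and $R$ (series law), we get $X_n=\sum_{\text{pieces}}X[\text{piece}]$. We then write $\sum_i\hat X_i$ in the same way, splitting each macro block at the same cut times and at the internal local cut times $L_j^+,L_k^-$ of Lemma~\ref{lem:internalcutpoint}; these are local cut times of the macro blocks, so $\hat X_i$ is also additive there. After this, the difference $\sum_i\hat X_i-X_n$ localizes to a sum over these pieces.

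For pieces of type (i), the only discrepancy comes from replacing $X[T_i^-,T_i^+]$ by $X[T_i^-,t_i]+X[t_i,T_i^+]$ on the $\hat X$ side. The error is therefore bounded by $W$, and Lemma~\ref{lem:W-tail} controls it by $C\sqrt\delta\,n(\log n)^{-3/2}$ outside probability $C\sqrt\delta+o_n(1)$.

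For a piece of type (ii), we argue as follows. On the $\hat X$ side, its contribution equals $D(j,k)$ plus the local costs near $u_{j-1},u_j,u_{k-1},u_k$. On the $X_n$ side, the shortcut through $S(J_j)\cap S(J_k)$ means the piece costs only a bounded number of local segments inside $J_{j-1}\cup J_j\cup J_{j+1}$ and $J_{k-1}\cup J_k\cup J_{k+1}$. For $D$ this is an upper bound via the path through the intersection point. For the matching lower bound, we use that the global cut times near $u_{j-1}$ and $u_k$ and the absence of other long edges (on $\mathcal O$) force every path to pass through $J_j\cup J_k$ blocks. For $R$ we use the same two-point/cut structure: series law at the cut times, and Rayleigh monotonicity for the bounded-size middle network containing the bypassed loop, whose resistance lies between $0$ and the local maxima. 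In either case, by \eqref{eq:maxbndcor-revised}, the difference between the piece's $\hat X$-contribution and $D(j,k)+(\text{local terms})-X[\text{piece}]$ is $O(\mathbb E[\Delta_n])$ per edge. Consequently,
\[
\sum_i\hat X_i-X_n=\sum_{(j,k)\in\mathcal E_{\mathrm{RW}}}D(j,k)+O\bigl(\#\mathcal E_{\mathrm{RW}}\,\mathbb E[\Delta_n]\bigr)+O(W).
\]
Edges whose endpoints are within two small blocks of a macro boundary (the case excluded in Lemma~\ref{lem:internalcutpoint}) are also absorbed into the $O(\mathbb E[\Delta_n])$ per-edge error.

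Finally, Lemma~\ref{lem:Djk-concentration} replaces $\sum D(j,k)$ by $\sum(k-j-1)\mathbb E[\Delta_n]$ up to $n(\log n)^{-3/2-\eta}$. Replacing $k-j-1$ by $k-j$ costs a further $\#\mathcal E_{\mathrm{RW}}\mathbb E[\Delta_n]$. By Lemma~\ref{lem:edge-number}, the accumulated $O(\#\mathcal E_{\mathrm{RW}}\mathbb E[\Delta_n])$ is at most $C\sqrt\delta\,n(\log n)^{-3/2}$ outside probability $C\sqrt\delta\log\delta^{-1}+o_n(1)\le C\delta^{1/3}+o_n(1)$. Summing all error probabilities gives the claim.

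The main obstacle is the per-edge comparison in type (ii), particularly for effective resistance, where we must show that the exact bypassed cost matches $D(j,k)$ up to $O(\mathbb E[\Delta_n])$. This requires that no other intersection creates parallel paths, which is guaranteed by $\mathcal O$ together with Lemma~\ref{lem:prim} and the global cut times. For the resistance case, I would first try to sandwich the piece between series decompositions at the cut times and use monotonicity on the short local networks.
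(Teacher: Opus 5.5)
Your proposal is correct and follows essentially the same route as the paper: decompose $X_n$ at the selected global cut times, insert the same cut times together with $L_j^+,L_k^-$ into the macro blocks, bound the $\Lambda_2$ discrepancies by $W$, and compare each long-edge contribution $\operatorname{Cont}(j,k)$ with $D(j,k)$ up to $C\mathbb{E}[\Delta_n]$ via Lemma~\ref{lem:local-max}, finishing with Lemmas~\ref{lem:Djk-concentration}, \ref{lem:W-tail} and~\ref{lem:edge-number}. One remark: you do not need a matching lower bound for the bypassed piece, since the paper simply uses $0\le X[T_j,T_k]\le X[T_j,\sigma_{j,k}]+X[\tau_{j,k},T_k]$ with $S_{\sigma_{j,k}}=S_{\tau_{j,k}}$ the intersection point (triangle inequality, plus Rayleigh monotonicity when $X=R$), and this works identically for $D$ and $R$.
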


\begin{proof}
Let $\mathcal G$ be the intersection of the following events:
\begin{enumerate}
\item the non-overlap event $\mathcal{O}$ in \eqref{eq:non-overlap-event-revised};
\item the endpoint-near exceptional events in Lemma~\ref{lem:prim} do not occur;
\item the cut-time events in Lemmas~\ref{lem:global-cut-near-edge},
\ref{lem:internalcutpoint}, and~\ref{lem:cut-near-lambda2} hold;
\item the local maximal event in Lemma~\ref{lem:local-max} holds;
\item the bounds in Lemmas~\ref{lem:W-tail}, \ref{lem:Djk-concentration}, and
\ref{lem:edge-number} hold with constants chosen large enough.
\end{enumerate}
By Lemmas~\ref{lem:prim}--\ref{lem:edge-number},
\begin{equation}
\label{eq:G-good-shortcut-new}
\mathbb P(\mathcal G^c)
\le C\delta^{1/3}+o_n(1),
\end{equation}
where $n\to\infty$ is taken with $\varepsilon$ and $\delta$ fixed.  We work on
$\mathcal G$. 
For every $i\in\Lambda_2$, let $T_i^-<t_i<T_i^+$ 
be the two global cut times supplied by Lemma~\ref{lem:cut-near-lambda2}.  For every long
edge $e=(j,k)\in \mathcal E_{\mathrm{RW}}$, let
$T_j$ and $T_k$ be the global cut times near its two endpoints supplied by
Lemma~\ref{lem:global-cut-near-edge}.  The non-overlap event implies that all these selected
cut times are naturally ordered and that two different long edges do not share a middle
segment.  Let $ 0=c_0<c_1<\cdots<c_f=n$ be the increasing list consisting of $0,n$ and all selected global cut times.  Since every
$c_r$ is a cut time up to $n$, the trace is separated at these times, and therefore
\begin{equation*}
%\label{eq:Dn-cut-decomp-new}
X_n=
\sum_{r=0}^{f-1}
 X[c_r,c_{r+1}].
\end{equation*}

We next compare this decomposition with the corresponding decomposition of
$\sum_{i=1}^{N_\varepsilon}\hat{X}_i$.  The same selected cut times can be inserted inside the macro
blocks $I_i$.  
To compare the cutpoints that are common to the expressions $X_n$ and $\sum_{i=1}^{N_{\varepsilon}} \hat{X}_i$, let 
$$\begin{aligned}
\operatorname{Cont}(j,k) :=& -X[T_j,T_k] + X[T_j, L_j^+] +X[L_j^+, t_a]
+ \sum_{k =a}^{b-1}X[t_k,t_{k+1}] +X[L_{k}^-,t_b] + X[L_k^-,T_k].
\end{aligned}
$$

All pieces which lie between two consecutive selected cut times and are not
associated with a long edge appear with the same sign in both decompositions and cancel.
The only pieces not associated with a long edge that do not cancel are the two boundary
pieces around the macro endpoints $t_i$ with $i\in\Lambda_2$; their total contribution is at
most $W$ by definition \eqref{eq:def-W-revised}.  
Thus, with one quantity $\operatorname{Cont}(j,k)$ associated with each $(j,k)\in \mathcal E_{\mathrm{RW}}$, we claim that
\begin{equation}
\label{eq:claim-cancellation-new}
\left|
\sum_{i=1}^{N_\varepsilon}\hat{X}_i-X_n
-
\sum_{(j,k)\in \mathcal E_{\mathrm{RW}}}\operatorname{Cont}(j,k)
\right|
\le W.
\end{equation}
Recall the local cut times $L_j^+$ and $L_k^{-}$ corresponding to the edge $(j,k)$ (if it exists), and let $t_a,\ldots,t_b$ be the macro-endpoints that lie between $u_{j}$ and $u_k$. 
Indeed, the uncanceled terms corresponding to \(i\in\Lambda_2\) and to traversals between cut times are of the form,
$$
X[T_i^-,t_i] + X[t_i, T_i^+] - X[T_i^{-},T_i^{+}].
$$
Then, we have \eqref{eq:claim-cancellation-new}. 
We now analyze the term $\operatorname{Cont}(j,k)$. Let $\sigma_{j,k} \in [u_{j-1},u_j]$ and $\tau_{j,k} \in [u_{k-1},u_k]$ be intersection times with $S_{\sigma_{j,k}} = S_{\tau_{j,k}}$. Notice now that, $0 \le X[T_j,T_k] \le X[T_{j}, \sigma_{j,k}] + X[\tau_{j,k}, T_k]$.  
In addition, notice that
$$
X[u_{j},t_a] = X[u_j, L_j^+] + X[L_j^+, t_a]. 
$$
Thus, we see that,
$$
\begin{aligned}
&\bigg| X[T_j, L_j^+] +X[L_j^+, t_a] +X[t_b,L_{k}^-] + X[L_k^-,T_k] - X[u_{j},t_a] - X[t_b,u_{k-1}] \bigg| \\ \le &X[u_j,L_j^+] 
+ X[T_j, L_j^+] + X[L_k^-,T_k] + X[L_k^-,t_b].
\end{aligned}
$$

We remark that Lemma~\ref{lem:local-max} bounds the errors above between $\operatorname{Cont}(j,k)$ and $D(j,k)$ by $C \mathbb{E}[\Delta_n]$. This will even be true if one of $u_j,u_{k-1}$ is sufficiently close to $t_a$ or $t_b$ that one does not need to find a local cut time via Lemma~\ref{lem:internalcutpoint}.  
Hence, uniformly over $(j,k)\in \mathcal E_{\mathrm{RW}}$, on $\mathcal{G}$, 
\begin{equation}
\label{eq:edge-cont-Djk-new}
\left|
\operatorname{Cont}(j,k)-D(j,k)
\right|
\le C\mathbb E[\Delta_n].
\end{equation}
Combining \eqref{eq:claim-cancellation-new} and \eqref{eq:edge-cont-Djk-new}, we obtain
\begin{equation*}
%\label{eq:shortcut-Djk-new}
\left|
\sum_{i=1}^{N_\varepsilon}\hat{X}_i-X_n
-
\sum_{(j,k)\in \mathcal E_{\mathrm{RW}}}D(j,k)
\right|
\le W+C\#\mathcal E_{\mathrm{RW}}\mathbb E[\Delta_n].
\end{equation*}
On $\mathcal G$, Lemmas~\ref{lem:W-tail} and~\ref{lem:edge-number} give
\begin{equation*}
%\label{eq:W-edge-number-bound-new}
W+C\#\mathcal E_{\mathrm{RW}}\mathbb E[\Delta_n]
\le C\sqrt\delta\,n(\log n)^{-3/2}.
\end{equation*}
Finally, Lemma~\ref{lem:Djk-concentration} gives
\begin{equation*}
%\label{eq:Djk-replace-new}
\sum_{(j,k)\in \mathcal E_{\mathrm{RW}}}D(j,k)
=
\mathbb E[\Delta_n]
\sum_{(j,k)\in \mathcal E_{\mathrm{RW}}}(k-j-1)
+o_{\mathbb P}\bigl(n(\log n)^{-3/2}\bigr).
\end{equation*}
Replacing $k-j-1$ by $k-j$ costs
\begin{equation}
\label{eq:minus-one-cost-new}
\mathbb E[\Delta_n] \#\mathcal E_{\mathrm{RW}},
\end{equation}
which is again bounded by Lemma~\ref{lem:edge-number}.  Combining
\eqref{eq:G-good-shortcut-new}--\eqref{eq:minus-one-cost-new} proves the desired estimate.
\end{proof}

For $r\ge2$ define
\[
   \#\mathcal E_{\mathrm{RW}}(r):=\bigl|\{(j,k)\in\mathcal E_{\mathrm{RW}}:k-j=r\}\bigr|,
   \qquad
   L_{\mathrm{RW}}=
   \sum_{r\ge2}r\#\mathcal E_{\mathrm{RW}}(r).
\]
For a cutoff $R$,  set
\[
   L_{\mathrm{RW}}^R=
   \sum_{2\le r\le R}r\#\mathcal E_{\mathrm{RW}}(r).
\]

Finally, we have the following lemma: 
\begin{lemma}
\label{lem:wcutoff}
For fixed $\varepsilon$ and $\delta$,
\begin{equation*}
%\label{eq:L1-cutoff-revised}
   \mathbb{P}(L_{\mathrm{RW}}\ne L_{\mathrm{RW}}^R)
   \le \frac{C\delta^{-1}\varepsilon^{-1}}{R}+o_n(1).
\end{equation*}
\end{lemma}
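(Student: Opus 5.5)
The event $\{L_{\mathrm{RW}}\ne L_{\mathrm{RW}}^R\}$ holds exactly when $\mathcal E_{\mathrm{RW}}$ contains an edge $(j,k)$ with $k-j>R$. Indeed, $L_{\mathrm{RW}}-L_{\mathrm{RW}}^R=\sum_{r>R}r\,\#\mathcal E_{\mathrm{RW}}(r)$ is a sum of nonnegative terms, and it vanishes if and only if every such count is zero. The whole proof is therefore a union bound over candidate long edges. This is the content of \eqref{eq:edge-tail-basic}, and the plan is to verify that estimate directly from \eqref{eq:basic-edge-prob}.

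First we count candidate pairs. Each pair in $\mathcal P$ with $k-j=r$ is determined by its left endpoint $j\le M$, so there are at most $M=\delta^{-1}\varepsilon^{-1}\log n$ such pairs. The bound \eqref{eq:basic-edge-prob} applies uniformly over $(j,k)\in\mathcal P$. Hence
\[
\mathbb P(L_{\mathrm{RW}}\ne L_{\mathrm{RW}}^R)\le\sum_{r>R}\sum_{\substack{(j,k)\in\mathcal P\\ k-j=r}}\mathbb P((j,k)\in\mathcal E_{\mathrm{RW}})\le\sum_{r>R}\delta^{-1}\varepsilon^{-1}\log n\cdot\frac{C}{r^2\log n}\le\frac{C\delta^{-1}\varepsilon^{-1}}{R}.
\]
In this bound the factors of $\log n$ cancel exactly, and $\sum_{r>R}r^{-2}\le 1/R$.

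The only point that needs care is the uniformity of \eqref{eq:basic-edge-prob} when $r$ is comparable to $M$. For those lengths, the error term in Proposition~\ref{prop:sharp-lri-4d} is not claimed to be small. Even so, the upper bound $C/(r^2\log n)$ still holds through \eqref{eq:block-intersection-upper}, which is a crude bound valid for all separations. If one wants to avoid even this, the $o_n(1)$ term in the statement absorbs any range of $r$ where only a weaker estimate is available. For example, for $r\ge(\log n)^{1/2}$ one can bound the total probability of an intersection between $S[0,t]$ and $S[t+r h,n]$ by the standard four-dimensional estimate from \cite[Chapter~4]{Lawler1991}. That gives $O(\log(n/(rh))/\log n)=o_n(1)$ after summing over starting blocks in dyadic groups. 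With this remark the lemma follows, and I expect no real obstacle.
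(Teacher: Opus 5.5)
Your proof is correct and follows the paper's route. The paper notes that $\{L_{\mathrm{RW}}\ne L_{\mathrm{RW}}^R\}$ forces an edge of length $>R$ and cites \eqref{eq:edge-tail-basic}; you go one step further and derive that bound by a union bound from \eqref{eq:basic-edge-prob}, which gives it even without the $o_n(1)$ term, so your last paragraph's fallback is unnecessary given that \eqref{eq:basic-edge-prob} is stated uniformly over $\mathcal P$.
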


\begin{proof}
By~\eqref{eq:edge-tail-basic}, we have
\[
\mathbb{P}(L_{\mathrm{RW}}\ne L_{\mathrm{RW}}^R)
\le \mathbb{P}(\exists(j,k)\in\mathcal E_{\mathrm{RW}}:k-j>R)
\le \frac{C\delta^{-1}\varepsilon^{-1}}{R}+o_n(1).
\]
\end{proof}

\subsection{Replacement by long range percolation}
\label{subsec:lrp-replacement}

We now introduce the independent long range percolation graph.  For $1\le j<k\le M$,
let the edge $j\leftrightarrow k$ be present independently with probability
\begin{equation}
\label{eq:percolation-edge-prob-revised}
   p_{j,k}=p_{k-j}:=
   \begin{cases}
   1,& k-j=1,\\[0.4em]
   \displaystyle
   \frac{1}{2\log n}
   \log\left(1+\frac{1}{(k-j)^2-1}\right),& k-j\ge2.
   \end{cases}
\end{equation}
For $k-j\ge2$, this connection probability is exactly the leading term in the block
intersection estimate~\eqref{Eq14}, which follows from
Proposition~\ref{prop:sharp-lri-4d}.  The nearest-neighbor edges $k-j=1$ are included
deterministically.  This change does not affect the shortcut functional below, since
$\Gamma_{\mathrm{LRP}}$, $L_{\mathrm{LRP}}$, and $L_{\mathrm{LRP}}^R$ only use edges with $k-j\ge2$.
Let
\begin{equation*}
%\label{eq:def-Gamma-LRP-revised}
   \Gamma_{\mathrm{LRP}}=\bigl\{(j,k):1\le j+2\le k\le M,
   j\leftrightarrow k,
   \exists i\in\{1,\ldots,N_\varepsilon-1\}\text{ with }j\le im<k\bigr\}.
\end{equation*}
Define
\[
   L_{\mathrm{LRP}}:=\sum_{(j,k)\in\Gamma_{\mathrm{LRP}}}(k-j),
   \qquad
   L_{\mathrm{LRP}}^R:=\sum_{\substack{(j,k)\in\Gamma_{\mathrm{LRP}}\\2\le k-j\le R}}(k-j).
\]

\begin{lemma}
\label{lem:samelimdist}
Fix $\varepsilon,
\delta$ and $R$.  For every bounded continuous function $F$,
\begin{equation}
\label{eq:LRP-replacement-truncated-revised}
   \lim_{n\to\infty}
   \left|\mathbb{E}[F(L_{\mathrm{RW}}^R)]-\mathbb{E}[F(L_{\mathrm{LRP}}^R)]\right|=0.
\end{equation}
\end{lemma}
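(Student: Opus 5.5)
The plan is to reduce the lemma to a total variation statement for the truncated edge sets, since $L^R_{\mathrm{RW}}$ and $L^R_{\mathrm{LRP}}$ are the same deterministic function of the truncated edge sets
\[
\mathcal E^R_{\mathrm{RW}}:=\{(j,k)\in\mathcal E_{\mathrm{RW}}:k-j\le R\},\qquad
\Gamma^R_{\mathrm{LRP}}:=\{(j,k)\in\Gamma_{\mathrm{LRP}}:k-j\le R\}.
\]
Hence, for bounded $F$,
\[
\bigl|\mathbb E[F(L^R_{\mathrm{RW}})]-\mathbb E[F(L^R_{\mathrm{LRP}})]\bigr|\le 2\|F\|_\infty\, d_{\rm TV}\bigl(\mathcal L(\mathcal E^R_{\mathrm{RW}}),\mathcal L(\Gamma^R_{\mathrm{LRP}})\bigr).
\]
It therefore suffices to show that this total variation distance tends to $0$ as $n\to\infty$, for fixed $\varepsilon,\delta,R$. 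Continuity of $F$ is not even needed.

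For the total variation bound I would rerun the argument of Section~\ref{sec:weak-coupling-four} with $\vartheta=\delta\varepsilon$. At this choice the blocks $I^\vartheta_i$ are exactly the small blocks $J_j$, and $N_\vartheta=M$. The edge set $\mathcal E^R_{\mathrm{RW}}$ is the restriction of $\mathcal E_{\vartheta,R}$ to the deterministic subfamily $\mathcal P\cap\mathcal P_{\vartheta,R}$ of edges crossing a macro endpoint. Likewise $\Gamma^R_{\mathrm{LRP}}$ is the restriction of $\Gamma_{\vartheta,R}$ to the same subfamily, with the same probabilities $p_{k-j}$ from \eqref{eq:def-p-four} and \eqref{eq:percolation-edge-prob-revised}. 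Restricting two random sets to a fixed deterministic subfamily is a measurable map and does not increase total variation distance. By Lemma~\ref{lem:fixed-cutoff-product-approx} and the bound \eqref{eq:replace-q-by-p-new-section}, we get
\[
d_{\rm TV}\bigl(\mathcal L(\mathcal E_{\vartheta,R}),\mathcal L_\Gamma(\Gamma_{\vartheta,R})\bigr)\to0 .
\]
This bound was proved in the proof of Proposition~\ref{prop:weak-coupling-four} for every fixed $\vartheta\in(0,1)$ and fixed $R$, before the cutoff was removed. Alternatively, one could cite Proposition~\ref{prop:weak-coupling-four} directly and restrict both sides.

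The one point needing care is that the inputs to Lemma~\ref{lem:fixed-cutoff-product-approx} hold at the scale $h=\delta\varepsilon n(\log n)^{-1}$. These inputs are \eqref{eq:weak-q-p-fixed-R}, \eqref{eq:weak-total-weight} and \eqref{eq:weak-overlap-sum-rw}. Their constants may now depend on $\delta\varepsilon$, but this is harmless since these parameters are fixed while $n\to\infty$. The ratio $q_e/p_e\to1$ follows from Proposition~\ref{prop:sharp-lri-4d}, using $\log h\sim\log n$. The overlap sum is controlled by the freezing argument of Lemma~\ref{lem:non-overlap} together with Remark~\ref{rem:overlapmicro}. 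I expect this verification, in particular the overlap term $b_2$ in the Chen--Stein bound, to be the only nontrivial step. It is already handled in the paper, so the main work is checking that the small-block version needs nothing beyond fixed-$R$ counting.
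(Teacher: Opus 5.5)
Your proposal is correct and is essentially the paper's proof. Both take $\vartheta=\varepsilon\delta$ in Section~\ref{sec:weak-coupling-four}, restrict the small-block edge sets to the deterministic family of pairs crossing a macro endpoint, and bound the difference by $2\|F\|_\infty$ times the coupling error. The only difference is cosmetic: your main route uses the fixed-$R$ bound $d_{\rm TV}(\mathcal L(\mathcal E_{\vartheta,R}),\mathcal L_\Gamma(\Gamma_{\vartheta,R}))\to0$ from inside the proof of Proposition~\ref{prop:weak-coupling-four}, which suffices because $L^R_{\mathrm{RW}}$ and $L^R_{\mathrm{LRP}}$ only see edges of length at most $R$, whereas the paper cites the full untruncated proposition, exactly as in your stated alternative.
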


\begin{proof}
We use Proposition~\ref{prop:weak-coupling-four} with coarse-graining parameter
$\vartheta=\varepsilon\delta$, i.e. at the small-block scale
$h=\delta\varepsilon n(\log n)^{-1}$.  It couples the full small-block intersection
graph with the independent long range percolation graph with error probability tending to
zero.  After constructing this coupling, we restrict both graphs to the deterministic set of
pairs crossing one of the macro endpoints $t_i$.  This restriction can only decrease the
coupling error.  The variables $L_{\mathrm{RW}}^R$ and $L_{\mathrm{LRP}}^R$ are measurable functions of these
restricted graphs.  Hence
\[
   \left|\mathbb E[F(L_{\mathrm{RW}}^R)]-\mathbb E[F(L_{\mathrm{LRP}}^R)]\right|
   \le 2\|F\|_\infty\,
   \mathbb P(\mathcal E_{\varepsilon\delta}\ne\Gamma_{\varepsilon\delta})
   \longrightarrow0,
\]
which proves~\eqref{eq:LRP-replacement-truncated-revised}.
\end{proof}

\begin{lemma}
\label{lem:shortcut-to-LRP}
Recall that $b=b_X$ is the constant in \eqref{Eq2}. 
Let $F$ be bounded and uniformly continuous.  For each fixed $\varepsilon\in(0,1)$,
\begin{equation*}
%\label{eq:shortcut-to-L2-revised}
\lim_{\delta\downarrow0}\limsup_{n\to\infty}
\left|
\mathbb{E}\left[F\left(\frac{\sum_{i=1}^{N_\varepsilon}\hat{X}_i-X_n}{n(\log n)^{-3/2}}\right)\right]
-
\mathbb{E}\left[F\left(b\varepsilon\delta L_{\mathrm{LRP}}\right)\right]
\right|=0.
\end{equation*}
\end{lemma}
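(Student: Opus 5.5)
The plan is a chain of approximations, each costing an error that vanishes under $\lim_{\delta\downarrow0}\limsup_{n\to\infty}$ (with $R\to\infty$ taken in between). Write $V_n:=(\sum_{i=1}^{N_\varepsilon}\hat X_i-X_n)/(n(\log n)^{-3/2})$ and $c_n:=\mathbb E[\Delta_n]/(n(\log n)^{-3/2})$. By \eqref{eq:small-block-cost-revised}, $c_n=b\varepsilon\delta(1+O((\log n)^{-\theta}))$.

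Since $F$ is bounded and uniformly continuous, it suffices to show that for every $\kappa>0$,
\[
\lim_{\delta\downarrow0}\limsup_{n\to\infty}\mathbb P\bigl(|V_n-c_nL_{\mathrm{RW}}|>\kappa\bigr)=0 .
\]
This follows from Lemma~\ref{lem:shortcut-approximation}: the error there is $C\sqrt\delta$ in normalized units with probability at least $1-C\delta^{1/3}-o_n(1)$, and given $\kappa$ one chooses $\delta$ small. Hence $|\mathbb E F(V_n)-\mathbb E F(c_nL_{\mathrm{RW}})|\le \omega_F(C\sqrt\delta)+2\|F\|_\infty(C\delta^{1/3}+o_n(1))$, where $\omega_F$ is the modulus of continuity of $F$.

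Next, replace $c_n$ by $b\varepsilon\delta$. The difference is $O((\log n)^{-\theta})\,b\varepsilon\delta L_{\mathrm{RW}}$, so tightness of $L_{\mathrm{RW}}$ is needed for fixed $\varepsilon,\delta$. This follows from Lemma~\ref{lem:wcutoff}, since $L_{\mathrm{RW}}=L^R_{\mathrm{RW}}$ with probability at least $1-C/R-o_n(1)$. On that event, $L^R_{\mathrm{RW}}\le R\,\#\mathcal E_{\mathrm{RW}}$, and $\mathbb E\#\mathcal E_{\mathrm{RW}}$ is bounded by summing \eqref{eq:basic-edge-prob}. Thus the difference is $o_{\mathbb P}(1)$.

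Then truncate. By Lemma~\ref{lem:wcutoff}, $|\mathbb E F(b\varepsilon\delta L_{\mathrm{RW}})-\mathbb E F(b\varepsilon\delta L^R_{\mathrm{RW}})|\le 2\|F\|_\infty(C\delta^{-1}\varepsilon^{-1}/R+o_n(1))$. The analogous bound holds for $L_{\mathrm{LRP}}$ versus $L^R_{\mathrm{LRP}}$: under independence, $\mathbb P(\exists\text{ edge of length}>R)\le\sum_{r>R}C\varepsilon^{-1}r\log n\cdot p_r\le C\varepsilon^{-1}/R$, using at most $C\varepsilon^{-1}r\log n$ crossing pairs of length $r$ and $p_r\le C/(r^2\log n)$. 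In fact the bound is slightly better, since for $r\ge\delta^{-1}$ the count is at most $M$. Lemma~\ref{lem:samelimdist}, applied to the bounded continuous function $x\mapsto F(b\varepsilon\delta x)$, identifies the limits of $\mathbb E F(b\varepsilon\delta L^R_{\mathrm{RW}})$ and $\mathbb E F(b\varepsilon\delta L^R_{\mathrm{LRP}})$ for fixed $R$.

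Assembling these with the triangle inequality gives
\[
\limsup_{n}\bigl|\mathbb E F(V_n)-\mathbb E F(b\varepsilon\delta L_{\mathrm{LRP}})\bigr|\le \omega_F(C\sqrt\delta)+C\delta^{1/3}+C_{\varepsilon,\delta}/R .
\]
Let $R\to\infty$ first, for fixed $\varepsilon,\delta$; this is legitimate because the left side does not depend on $R$. Then let $\delta\downarrow0$.

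The only delicate point is this order of limits: the truncation error $C\delta^{-1}\varepsilon^{-1}/R$ blows up as $\delta\to0$, so $R$ must be removed before $\delta$. The remaining steps are direct applications of the preceding lemmas. There is also a minor bookkeeping check: $\Gamma_{\mathrm{LRP}}$ uses the same crossing set $\mathcal P$ as $\mathcal E_{\mathrm{RW}}$ (reading $im$ as $i\delta^{-1}$), which is what the restriction argument in Lemma~\ref{lem:samelimdist} requires.
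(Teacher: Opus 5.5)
Your proposal is correct and follows the same route as the paper's proof. That route is: Lemma~\ref{lem:shortcut-approximation} with \eqref{eq:small-block-cost-revised}, then truncation at $R$ via Lemma~\ref{lem:wcutoff} and the analogous union bound for $\Gamma_{\mathrm{LRP}}$, then Lemma~\ref{lem:samelimdist} for the truncated variables, with limits in the order $n\to\infty$, $R\to\infty$, $\delta\downarrow0$. Your tightness check for replacing $c_n$ by $b\varepsilon\delta$ fills in a step the paper leaves implicit. One small correction concerns the tail bound for $\Gamma_{\mathrm{LRP}}$. Using the count $C\varepsilon^{-1}r\log n$ for every $r$ gives $\sum_{r>R}C\varepsilon^{-1}r^{-1}$, which is of order $\varepsilon^{-1}\log(M/R)$ and diverges. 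So the bound $\#\{\text{pairs of length } r\}\le M$ for $r>\delta^{-1}$ is needed, not just a refinement; it gives $C\delta^{-1}\varepsilon^{-1}/R$ for $R\ge\delta^{-1}$, matching Lemma~\ref{lem:wcutoff} and your final $C_{\varepsilon,\delta}/R$.
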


\begin{proof}
By Lemma~\ref{lem:shortcut-approximation} and~\eqref{eq:small-block-cost-revised},
\[
   \frac{\sum_i \hat{X}_i-X_n}{n(\log n)^{-3/2}}
   =b\varepsilon\delta L_{\mathrm{RW}}+O_{\mathbb{P}}(\sqrt\delta)+o_{\mathbb{P}}(1).
\]
Choose $R$ so large that the cutoff errors for $L_{\mathrm{RW}}$ and $L_{\mathrm{LRP}}$ are smaller than the prescribed $\eta>0$; this is possible by Lemma~\ref{lem:wcutoff} and the identical
union bound for the independent graph.  Then use Lemma~\ref{lem:samelimdist} for the
truncated variables.  Finally, let $\delta\downarrow0$ and use the uniform continuity of
$F$.
\end{proof}

\subsection{Characteristic functions}
\label{subsec:characteristic-functions-four}

For $r\ge2$, let $N_r^{(\varepsilon,
\delta)}$ be independent Poisson random variables with
parameters
\begin{equation}
\label{eq:poisson-parameters-revised}
   \lambda_r^{(\varepsilon,
\delta)}=
   \begin{cases}
   \displaystyle
   \frac{r}{2\varepsilon}
   \log\left(1+\frac{1}{r^2-1}\right),& 2\le r\le\delta^{-1},\\[1em]
   \displaystyle
   \frac{1}{2\varepsilon\delta}
   \log\left(1+\frac{1}{r^2-1}\right),& r>\delta^{-1}.
   \end{cases}
\end{equation}
Set
\begin{equation*}
%\label{eq:Xepsdel-def-revised}
   X_{\varepsilon,
\delta}
   :=\sum_{r=2}^{\infty}\varepsilon\delta\,r\,N_r^{(\varepsilon,
\delta)}.
\end{equation*}

\begin{lemma}
\label{lem:poisson-limit-fixed-delta}
For fixed $\varepsilon,
\delta$,
\begin{equation*}
%\label{eq:L2-poisson-limit-revised}
   \varepsilon\delta L_{\mathrm{LRP}}\Rightarrow X_{\varepsilon,
\delta}
   \qquad(n\to\infty).
\end{equation*}
\end{lemma}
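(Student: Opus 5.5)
Since $L_{\mathrm{LRP}}=\sum_{r\ge2} r\,\#\Gamma_{\mathrm{LRP}}(r)$, where $\#\Gamma_{\mathrm{LRP}}(r)$ is the number of open crossing edges of length $r$, the plan is to prove the statement in two stages. First, for each fixed $r$, $\#\Gamma_{\mathrm{LRP}}(r)$ is a sum of independent Bernoulli variables, and these sums are independent across different $r$. A Poisson limit for each finite family, combined with a uniform tail bound for long edges, then gives the result.

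\emph{Step 1: counting crossing pairs.} For fixed $r\ge2$, let $c_r(n)$ be the number of pairs $(j,k)$ with $k-j=r$ whose span crosses some macro endpoint $i\delta^{-1}$ with $1\le i\le N_\varepsilon-1$. If $r\le\delta^{-1}$, each crossing pair crosses exactly one endpoint, and for a given endpoint there are exactly $r$ choices of $j$. Hence $c_r(n)=r(N_\varepsilon-1)$. If $r>\delta^{-1}$, every pair of length $r$ crosses at least one endpoint, except possibly $O(r)$ pairs at the boundary, so $c_r(n)=M-r+O(1)=\delta^{-1}\varepsilon^{-1}\log n+O_r(1)$. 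Multiplying by $p_r=(2\log n)^{-1}\log(1+1/(r^2-1))$ gives $c_r(n)p_r\to\lambda_r^{(\varepsilon,\delta)}$ in both regimes, exactly as in \eqref{eq:poisson-parameters-revised}.

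\emph{Step 2: finite-dimensional Poisson limit.} Fix $R$. The edge indicators are independent, and $\max_e p_e=O(1/\log n)\to0$. By Le Cam's inequality, the total variation distance between the law of $\#\Gamma_{\mathrm{LRP}}(r)$ and $\mathrm{Poisson}(c_r(n)p_r)$ is at most $c_r(n)p_r^2=O(1/\log n)$. The counts for different $r$ are independent, so $(\#\Gamma_{\mathrm{LRP}}(r))_{2\le r\le R}$ converges jointly to $(N_r^{(\varepsilon,\delta)})_{2\le r\le R}$. Therefore $\varepsilon\delta L^R_{\mathrm{LRP}}\Rightarrow X^R_{\varepsilon,\delta}:=\sum_{r=2}^R\varepsilon\delta rN_r^{(\varepsilon,\delta)}$. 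One could equivalently compute the characteristic function $\prod_e(1+p_e(e^{i\xi\varepsilon\delta r}-1))$ directly.

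\emph{Step 3: removing the cutoff.} This is the only step that needs care, because $\mathbb E[L_{\mathrm{LRP}}]$ diverges logarithmically. Indeed, $r\lambda_r\asymp r^{-1}$ for large $r$, so a first-moment bound on $L_{\mathrm{LRP}}-L^R_{\mathrm{LRP}}$ fails. Instead I will use a probability bound:
\[
\mathbb P(L_{\mathrm{LRP}}\ne L^R_{\mathrm{LRP}})\le\sum_{r>R}c_r(n)p_r\le \frac{C}{\varepsilon\delta}\sum_{r>R}r^{-2}\le\frac{C}{\varepsilon\delta R},
\]
uniformly in $n$. Here I use $c_r(n)\le M$ together with $\log(1+1/(r^2-1))\le 2r^{-2}$. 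Similarly, $\sum_{r>R}\lambda_r<\infty$ shows that $X_{\varepsilon,\delta}$ is almost surely a finite sum, with $\mathbb P(X_{\varepsilon,\delta}\ne X^R_{\varepsilon,\delta})\le C/(\varepsilon\delta R)$.

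\emph{Step 4: conclusion.} Take a bounded continuous $F$. Then
\[
|\mathbb E F(\varepsilon\delta L_{\mathrm{LRP}})-\mathbb E F(X_{\varepsilon,\delta})|\le 2\|F\|_\infty\,\frac{2C}{\varepsilon\delta R}+|\mathbb E F(\varepsilon\delta L^R_{\mathrm{LRP}})-\mathbb E F(X^R_{\varepsilon,\delta})|.
\]
Letting $n\to\infty$ first and then $R\to\infty$ gives the claimed weak convergence.
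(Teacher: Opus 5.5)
Your proof is correct and follows the same route as the paper's. Both arguments count the crossing pairs of each length, use independence together with binomial-to-Poisson approximation to get a joint Poisson limit for finitely many lengths, and remove the cutoff through the probability bound $\sum_{r>R}c_r(n)p_r\le C/(\varepsilon\delta R)$ rather than a first-moment bound. Your version is somewhat more explicit: you give exact counts (for $r>\delta^{-1}$ every pair crosses, so $c_r(n)=M-r$ with no boundary exceptions), a quantitative Le Cam bound, and a tail estimate that is uniform in $n$.
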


\begin{proof}
For a fixed $r$, the number of candidate pairs of length $r$ crossing a macro endpoint
is
\[
   e_r(n)=
   \begin{cases}
   rN_\varepsilon+O(r),&2\le r\le\delta^{-1},\\
   M-r+O(1),&r>\delta^{-1}.
   \end{cases}
\]
Define $\Gamma_{\mathrm{LRP}}(r):= \{(j,k) \in \Gamma_{\mathrm{LRP}}: |j-k|=r\}$.  
Then, $\#\Gamma_{\mathrm{LRP}}(r)$ is binomial with parameters $e_r(n)$ and $p_r$ from
\eqref{eq:percolation-edge-prob-revised}.  Since $p_r=O((r^2\log n)^{-1})$, the usual
Poisson approximation yields convergence of every finite vector
$(\#\Gamma_{\mathrm{LRP}}(2),\ldots,\#\Gamma_{\mathrm{LRP}}(R))$ to independent Poisson variables with parameters
\eqref{eq:poisson-parameters-revised}.

The tail is controlled in probability, not in expectation.  
Indeed,
\[
\mathbb{P}\bigl(\exists r>R:\#\Gamma_{\mathrm{LRP}}(r)>0\bigr)
\le
\sum_{r>R}e_r(n)p_r
\le \frac{C_{\varepsilon,
\delta}}{R}+o_n(1).
\]
Thus $L_{\mathrm{LRP}}$ and its truncation to $r\le R$ agree with high probability as
$R\to\infty$, uniformly for large $n$.  This proves the lemma.
\end{proof}

\begin{lemma}
\label{lem:delta-to-zero-characteristic}
Fix $\varepsilon\in(0,1)$.  As $\delta\downarrow0$,
$X_{\varepsilon,
\delta}$ converges in distribution to a random variable $X_{(\varepsilon)}$
with characteristic function
\begin{equation*}
%\label{eq:Xepsilon-characteristic-revised}
\log\mathbb{E}[e^{i\xi X_{(\varepsilon)}}]
=
\int_\varepsilon^\infty
\left(
\frac{e^{i\xi x}-1}{2x^2}
-
\frac{i\xi}{2x}\mathbf 1_{\{x<1\}}
\right)dx
+
\frac{i\xi}{2}\log(\varepsilon^{-1})
+
\int_0^1\frac{e^{i\xi\varepsilon x}-1}{2\varepsilon x}\,dx.
\end{equation*}
\end{lemma}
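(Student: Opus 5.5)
Since $X_{\varepsilon,\delta}$ is a countable sum of independent scaled Poisson variables, I will work directly with its characteristic function, which is explicit:
\[
\log\mathbb E[e^{i\xi X_{\varepsilon,\delta}}]=\sum_{r\ge2}\lambda_r^{(\varepsilon,\delta)}\bigl(e^{i\xi\varepsilon\delta r}-1\bigr).
\]
The sum converges absolutely because $\lambda_r=O(r^{-2})$ for $r>\delta^{-1}$. The plan is to compute the limit of this expression as $\delta\downarrow0$ for each fixed $\xi$, show that the limit is continuous at $\xi=0$, and conclude by L\'evy's continuity theorem. I split the sum at $r=\delta^{-1}$ according to the two regimes in \eqref{eq:poisson-parameters-revised}.

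\emph{Short edges, $2\le r\le\delta^{-1}$.} Put $x=\delta r\in(0,1]$. Then $\log(1+\frac{1}{r^2-1})=r^{-2}+O(r^{-4})$, so
\[
\lambda_r\bigl(e^{i\xi\varepsilon\delta r}-1\bigr)=\frac{1}{2\varepsilon r}\bigl(e^{i\xi\varepsilon x}-1\bigr)+O(r^{-3}),
\]
and the error is summable. Two separate effects appear here. For $r\ge K$ with $K$ large, the sum is a Riemann sum with mesh $\delta$:
\[
\sum_r\frac{\delta}{2\varepsilon x}\bigl(e^{i\xi\varepsilon x}-1\bigr)\longrightarrow\int_0^1\frac{e^{i\xi\varepsilon x}-1}{2\varepsilon x}\,dx .
\]
The integrand is bounded near $0$, so the contribution of small $r$ is $O(K\delta)$ and vanishes. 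The $O(r^{-3})$ correction terms multiply $e^{i\xi\varepsilon\delta r}-1=O(\delta r)$, so they also vanish by dominated convergence. The limit of this part is therefore exactly the last term of the claimed formula.

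\emph{Long edges, $r>\delta^{-1}$.} Put $y=\delta r>1$. Then $\lambda_r=\frac{1}{2\varepsilon\delta r^2}(1+O(r^{-2}))=\frac{\delta}{2\varepsilon y^2}(1+o(1))$, and the summand becomes $\frac{\delta}{2\varepsilon y^2}(e^{i\xi\varepsilon y}-1)$. This is a Riemann sum for $\int_1^\infty\frac{e^{i\xi\varepsilon y}-1}{2\varepsilon y^2}\,dy$; the tail is dominated by $2/y^2$, which justifies the passage to the limit. Substituting $x=\varepsilon y$ gives
\[
\int_\varepsilon^\infty\frac{e^{i\xi x}-1}{2x^2}\,dx .
\]

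It remains to match this with the stated form. Since $\int_\varepsilon^1\frac{dx}{2x}=\frac12\log(\varepsilon^{-1})$, we have
\[
\int_\varepsilon^\infty\frac{e^{i\xi x}-1}{2x^2}\,dx=\int_\varepsilon^\infty\Bigl(\frac{e^{i\xi x}-1}{2x^2}-\frac{i\xi}{2x}\mathbf 1_{\{x<1\}}\Bigr)dx+\frac{i\xi}{2}\log(\varepsilon^{-1}),
\]
which is precisely the claimed expression. The limit function is a finite-intensity compound Poisson exponent plus a bounded-integrand term, so it is continuous at $\xi=0$. L\'evy's continuity theorem then gives $X_{\varepsilon,\delta}\Rightarrow X_{(\varepsilon)}$.

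The only delicate step is the Riemann-sum convergence near the endpoint $y=1$ and at infinity, together with handling the $O(r^{-4})$ corrections uniformly in $\delta$; both are controlled by the summable bound $|e^{i\theta}-1|\le\min(2,|\theta|)$. I expect no real obstacle beyond this bookkeeping.
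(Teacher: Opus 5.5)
Your proposal is correct and follows essentially the same route as the paper. Like the paper, you split the exponent $\sum_r\lambda_r(e^{i\xi\varepsilon\delta r}-1)$ at $r=\delta^{-1}$, treat each piece as a Riemann sum with the $O(r^{-4})$ corrections controlled by $|e^{i\theta}-1|\le \min(2,|\theta|)$, and recover the compensated form via $\int_\varepsilon^1\frac{dx}{2x}=\frac12\log(\varepsilon^{-1})$. You also spell out the continuity-at-zero and L\'evy continuity step, which the paper leaves implicit.
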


\begin{proof}
This is a Riemann-sum computation.  For $r>\delta^{-1}$, the jump size is
$x=\varepsilon\delta r$, and
\[
   \log\left(1+\frac{1}{r^2-1}\right)=r^{-2}+O(r^{-4}).
\]
The corresponding contribution to the logarithm of the characteristic function is a
Riemann sum for the integral over $(\varepsilon,
\infty)$.  Since the integrand has a
non-integrable linear part at the origin, we add and subtract the compensator
$i\xi(2x)^{-1}\mathbf 1_{\{x<1\}}$, producing the term
$(i\xi/2)\log(\varepsilon^{-1})$.

For $2\le r\le\delta^{-1}$, the parameter is asymptotic to $(2\varepsilon r)^{-1}$.
Writing $x=\delta r$, the contribution converges to
\[
   \int_0^1\frac{e^{i\xi\varepsilon x}-1}{2\varepsilon x}\,dx.
\]
The error estimates follow from
$|e^{i\xi x}-1-i\xi x|\le Cx^2$ and
$|e^{i\xi x}-1|\le C(1\wedge |x|)$, together with the summability of the
$O(r^{-4})$ error.
\end{proof}

\subsection{Proof of Theorem~\ref{thm:intro-stable} in dimension four}
\label{subsec:proof-thm13-four-dim}

We now finish the proof of the four-dimensional part of Theorem~\ref{thm:intro-stable}.

\begin{theorem}
\label{thm:stable-four-dimensional-restated}
%Let $X_n$ denote either $D_n$ or $R_n$, and let 
Recall that $b_X$ is the constant in \eqref{Eq2}. Then
\[
\frac{
X_n-\mathbb{E}[X_n]
-\frac{b_X}{2}n(\log n)^{-3/2}\log\log n
}{
n(\log n)^{-3/2}
}
\Rightarrow
Z_{4,X},
\]
where $Z_{4,X}$ is a non-degenerate stable random variable of index $1$ with
skewness $-1$.  
\end{theorem}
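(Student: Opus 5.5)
The plan is to start from the decomposition \eqref{Eq5}, $\frac{X_n-\mathbb E X_n}{n(\log n)^{-3/2}}=Y_n^\varepsilon+Z_n^\varepsilon$, and send $n\to\infty$, then $\delta\downarrow0$ (inside Lemma~\ref{lem:shortcut-to-LRP}), and finally $\varepsilon\downarrow0$. By Corollary~\ref{cor:Zn-small-four}, $\mathbb E[(Z_n^\varepsilon)^2]\le C\varepsilon$ uniformly in $n$, so $Z_n^\varepsilon$ vanishes in $L^2$ as $\varepsilon\downarrow0$. The term $Y_n^\varepsilon$ equals $-\frac{\sum_i\hat X_i-X_n}{n(\log n)^{-3/2}}$ plus the deterministic centering
$$c_n^\varepsilon:=\frac{\mathbb E[\sum_i\hat X_i]-\mathbb E[X_n]}{n(\log n)^{-3/2}}.$$
Lemmas~\ref{lem:shortcut-to-LRP}, \ref{lem:poisson-limit-fixed-delta} and~\ref{lem:delta-to-zero-characteristic} together show that, for fixed $\varepsilon$, the random part converges in law, as $n\to\infty$, to $-bX_{(\varepsilon)}$. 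The step from Lemma~\ref{lem:poisson-limit-fixed-delta} to the statement with $\delta$ is a diagonal argument: first $n\to\infty$, then $\delta\downarrow0$, using bounded uniformly continuous test functions. What remains is to identify $c_n^\varepsilon$ up to $o(1)$ and then take $\varepsilon\downarrow0$.

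\textbf{Computing the centering.} We have $\mathbb E[\sum_i\hat X_i]=N_\varepsilon\,\mathbb E[X_{\varepsilon n/\log n}]$. Using \eqref{Eq2}, $\log(\varepsilon n/\log n)=\log n-\log\log n+\log\varepsilon$. Expanding $(\log m)^{-1/2}$ gives
$$\mathbb E\Big[\sum_i\hat X_i\Big]-\mathbb E X_n= b\,n(\log n)^{-3/2}\cdot\tfrac12\bigl(\log\log n+\log\varepsilon^{-1}\bigr)+\text{error}.$$
The error term $O(n(\log n)^{-1/2-\theta})$ is too crude if $\theta<1$. It must therefore be sharpened by telescoping the dyadic identity \eqref{Eq8} over roughly $\log_2(\log n/\varepsilon)$ levels. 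At level $k$ the sum $2^k\cdot\frac{b\log2}{2}(n2^{-k})(\log(n2^{-k}))^{-3/2}$ behaves like $\frac{b\log 2}{2}\,n(\log n)^{-3/2}(1+O(k/\log n))$, with relative errors $(\log n)^{-\delta_0}$. Summing over the levels gives $\frac b2 n(\log n)^{-3/2}\log_2(\log n/\varepsilon)\log2$ up to $o(n(\log n)^{-3/2})$, and hence
$$c_n^\varepsilon=\tfrac b2\log\log n+\tfrac b2\log\varepsilon^{-1}+o(1).$$
The relative errors accumulate over $\log\log n$ levels, which costs a factor $\log\log n\,(\log n)^{-\delta_0}\to0$, so this is acceptable. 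I expect this step to be the main obstacle. The bookkeeping of the errors from non-dyadic block sizes and integer parts must be shown to be $o(n(\log n)^{-3/2})$, uniformly enough that it survives.

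\textbf{Taking $\varepsilon\downarrow0$.} Subtracting $\frac b2\log\log n$, the left side of the theorem converges for fixed $\varepsilon$, up to the $Z$ term, to
$$V_\varepsilon:=-bX_{(\varepsilon)}+\tfrac b2\log\varepsilon^{-1}.$$
From Lemma~\ref{lem:delta-to-zero-characteristic}, the $\frac{i\xi}{2}\log\varepsilon^{-1}$ term cancels exactly this shift after scaling by $-b$. The remaining integral over $(0,1)$ is $\int_0^1 \frac{e^{i\xi\varepsilon x}-1}{2\varepsilon x}\,dx$, which tends to $\frac{i\xi}{2}$ as $\varepsilon\to0$. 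The $\int_\varepsilon^\infty$ term tends to the compensated Lévy integral $\int_0^\infty\bigl(\frac{e^{i\xi x}-1-i\xi x\mathbf 1_{x<1}}{2x^2}\bigr)dx$. So $\mathbb E e^{i\xi V_\varepsilon}\to\exp\{\psi(-b\xi)\}$, where $\psi$ is the characteristic exponent of a Lévy measure $\frac{dx}{2x^2}$ on $(0,\infty)$. This is a $1$-stable law totally skewed to the right, and after multiplying by $-b$ it is skewed to the left, i.e.\ skewness $-1$. It is non-degenerate since the Lévy measure is nonzero. Define $Z_{4,X}$ as this limit.

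\textbf{Concluding.} For a bounded Lipschitz test function $F$, split $|\mathbb E F(\text{LHS}_n)-\mathbb E F(Z_{4,X})|$ into three pieces: $\|F\|_{\rm Lip}\,\mathbb E|Z_n^\varepsilon|\le C\sqrt\varepsilon$, the fixed-$\varepsilon$ convergence error, and $|\mathbb E F(V_\varepsilon)-\mathbb E F(Z_{4,X})|$. Taking $\limsup_n$ and then $\varepsilon\downarrow0$ gives the theorem for $D_n$. For $R_n$ the argument is identical: every cited lemma was stated for $X\in\{D,R\}$, with only the constant $b=b_R$ changing. The one subtlety is that in the resistance case, the cut-time decomposition in Lemma~\ref{lem:shortcut-approximation} also uses series additivity of resistance across cut points.
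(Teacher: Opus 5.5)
Your proposal is correct and follows the paper's proof essentially step for step. You use the decomposition \eqref{Eq5}, the $L^2$ bound of Corollary~\ref{cor:Zn-small-four} for $Z_n^\varepsilon$, the fixed-$\varepsilon$ limit $bX_{(\varepsilon)}$ of the shortcut term from Lemmas~\ref{lem:shortcut-to-LRP}--\ref{lem:delta-to-zero-characteristic}, the centering $\frac{b}{2}\log\log n+\frac{b}{2}\log\varepsilon^{-1}+o(1)$ obtained by telescoping \eqref{Eq8} down to scale $\varepsilon n(\log n)^{-1}$, and the characteristic-function identification of $\lim_{\varepsilon\downarrow0}W_\varepsilon$ as a totally left-skewed $1$-stable law. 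Your more explicit accounting of the accumulated dyadic errors, and your flag on non-dyadic block counts (which the paper suppresses via its integer-part convention), usefully elaborate the single line the paper devotes to \eqref{eq:mean-shortcut-D-revised}.
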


\begin{proof}
Put
\[
S_{n}^{\varepsilon}:=\sum_{i=1}^{N_\varepsilon}\hat{X}_i-X_n,
\]
and write $b=b_X$.  By decomposition \eqref{Eq5},
\begin{align*}
%\label{eq:main-decomp-centered-D-revised}
\frac{X_n-\mathbb{E}[X_n]-\frac b2 a_n^{(3/2)}\log\log n}{a^{(3/2)}_n}
=
Z_n^{\varepsilon}
-
\frac{S_{n}^{\varepsilon}}{a_n^{(3/2)}}
+
\left\{
\frac{\mathbb{E}[S_{n}^{\varepsilon}]}{a_n^{(3/2)}}
-\frac b2\log\log n
\right\}.
%\tag{3.71}
\end{align*}

We take limits in the order
\[
n\to\infty,\qquad \delta\downarrow0,\qquad \varepsilon\downarrow0.
\]

Fix $\varepsilon\in(0,1)$.  By Lemmas~\ref{lem:shortcut-to-LRP},
\ref{lem:poisson-limit-fixed-delta}, and \ref{lem:delta-to-zero-characteristic}, for every uniformly continuous bounded function $F$,
\begin{align}
\label{eq:shortcut-law-fixed-eps}
\lim_{\delta\downarrow0}\limsup_{n\to\infty}
\left|
\mathbb{E}\left[
F\left(\frac{S_{n}^{\varepsilon}}{a_n^{(3/2)}}\right)
\right]
-
\mathbb{E}[F(bX_{(\varepsilon)})]
\right|
=0,
%\tag{3.72}
\end{align}
where $X_{(\varepsilon)}$ is the random variable in Lemma~\ref{lem:delta-to-zero-characteristic}.
Moreover, by summing the dyadic expectation loss \eqref{Eq8} from scale $n$ to
scale $\varepsilon n(\log n)^{-1}$, we have
\begin{align}
\label{eq:mean-shortcut-D-revised}
\frac{\mathbb{E}[S_{n}^{\varepsilon}]}{a_n^{(3/2)}}
=
\frac b2\log\log n+\frac b2\log(\varepsilon^{-1})+o_n(1)
%\tag{3.73}
\end{align}
for every fixed $\varepsilon\in(0,1)$.  
%Finally,  gives
%\[
%\mathbb{E}\bigl[(Z_n^{\varepsilon})^2\bigr]\le C\varepsilon .
%\tag{3.74}
%\label{eq:Zn-negligible-four-revised}
%\]
Define
\[
W_\varepsilon:=-bX_{(\varepsilon)}+\frac b2\log(\varepsilon^{-1}).
%\tag{3.75}
\label{eq:WepsD-def-revised}
\]
We now combine the bounded-continuous-function approximation  
with the $L^2$ bound on $Z_n^{\varepsilon}$. 
Let $F$ be bounded and uniformly continuous, and let $\omega_F$ be a modulus of
continuity of $F$.  
By Corollary~\ref{cor:Zn-small-four},  for any $\alpha>0$,
\begin{align}
\notag
\left|
\mathbb{E}[F(U_n+Z_n^{\varepsilon})]-\mathbb{E}[F(U_n)]
\right|
&\le
\omega_F(\alpha)
+
2\|F\|_\infty\mathbb{P}(|Z_n^{\varepsilon}|>\alpha)
\\
\notag
&\le
\omega_F(\alpha)
+
2\|F\|_\infty\alpha^{-2}\mathbb{E}\bigl[(Z_n^{\varepsilon})^2\bigr]
\\
\label{eq:Zn-negligible-four-revised}
&\le 
\omega_F(\alpha)+C\|F\|_\infty\varepsilon\alpha^{-2},
\end{align}
where
\[
U_n:=
-
\frac{S_{n}^{\varepsilon}}{a_n^{(3/2)}}
+
\left\{
\frac{\mathbb{E}[S_{n}^{\varepsilon}]}{a_n^{(3/2)}}
-\frac b2\log\log n
\right\}.
\]
Using \eqref{eq:shortcut-law-fixed-eps}, \eqref{eq:mean-shortcut-D-revised}, and
\eqref{eq:Zn-negligible-four-revised}, we obtain
\[
\limsup_{\delta\downarrow0}\limsup_{n\to\infty}
\left|
\mathbb{E}\left[
F\left(
\frac{X_n-\mathbb{E}[X_n]-\frac b2a_n^{(3/2)}\log\log n}{a_n^{(3/2)}}
\right)
\right]
-
\mathbb{E}[F(W_\varepsilon)]
\right|
\le
\omega_F(\alpha)+C\|F\|_\infty\varepsilon\alpha^{-2}.
%\tag{3.77}
\label{eq:fixed-eps-approx-final}
\]
Taking, for instance, $\alpha=\varepsilon^{1/4}$ and then letting
$\varepsilon\downarrow0$, the right-hand side tends to zero.

It remains to identify the limit of $W_\varepsilon$ as $\varepsilon\downarrow0$.
By Lemma~\ref{lem:delta-to-zero-characteristic}, with $\xi$ replaced by $-b\xi$, %and by the deterministic shift in \eqref{eq:WepsD-def-revised},
\[
\log\mathbb{E}[e^{i\xi W_\varepsilon}]
=
\int_{\varepsilon}^{\infty}
\left(
\frac{e^{-ib\xi x}-1}{2x^2}
+
\frac{ib\xi}{2x}{\bf 1}_{\{x<1\}}
\right)\,dx
+
\int_0^1
\frac{e^{-ib\xi\varepsilon x}-1}{2\varepsilon x}\,dx .
%\tag{3.78}
%\label{eq:WepsD-characteristic-revised}
\]
The first integral converges as $\varepsilon\downarrow0$ because the compensation term
cancels the singular linear part at the origin and the integrand is $O(x^{-2})$ at infinity.
For the second integral, Taylor's expansion gives
\[
\int_0^1
\frac{e^{-ib\xi\varepsilon x}-1}{2\varepsilon x}\,dx
\longrightarrow
-\frac{ib\xi}{2}.
%\tag{3.79}
%\label{eq:small-jump-drift-limit}
\]
Hence
\[
\Psi(\xi)
:=
\lim_{\varepsilon\downarrow0}
\log\mathbb{E}[e^{i\xi W_\varepsilon}]
=
\int_0^\infty
\left(
e^{-ib\xi x}-1+ib\xi x{\bf 1}_{\{x<1\}}
\right)\frac{dx}{2x^2}
-\frac{ib\xi}{2}.
%\tag{3.80}
\label{eq:D-stable-exponent-final-revised}
\]
The exponent $\Psi$ is of L\'{e}vy--Khintchine form.  Indeed, the measure
\[
\nu
=
\left(\frac{dx}{2x^2}{\bf 1}_{\{x>0\}}\right)\circ(x\mapsto -bx)^{-1}
\]
is a L\'{e}vy measure supported on $(-\infty,0)$; equivalently, it has density proportional to
$|y|^{-2}\,dy$ on the negative half-line.  
Therefore, $\exp\{\Psi(\xi)\}$ is a characteristic
function, it is continuous at $\xi=0$, and it is the characteristic function of a totally
left-skewed stable law of index $1$.  By L\'{e}vy's continuity theorem,
\[
W_\varepsilon\Rightarrow Z_{4,X}
\qquad (\varepsilon\downarrow0).
\]
Combining this convergence with the preceding estimate proves 
\[
\frac{
X_n-\mathbb{E}[X_n]-\frac b2n(\log n)^{-3/2}\log\log n
}{
n(\log n)^{-3/2}
}
\Rightarrow Z_{4,X}.
\]
Therefore, we obtain the result. 
\end{proof}

\appendix

\makeatletter
\renewcommand{\@seccntformat}[1]{}
\makeatother

\section{Appendix}

Here, we present a sketch of the proof in five dimensions. 
Our goal is Theorem \ref{thm:intro-stable}.

For $d=5$, subadditivity gives
\begin{equation}
 \label{eq:speed}
 \frac{\E[X_m]}{m}\longrightarrow \xi_X, 
\end{equation}
and the known linear lower bounds imply that $\xi_X\in(0,\infty)$ (e.g., see \cite[Remark 2.3]{AdhikariOkada2026}). 
 Put
\begin{equation*}
 %\label{eq:h5}
 h_5(r):=\int_0^1\int_r^{r+1}(s+t)^{-5/2}\,dt\,ds
 =\frac43\bigl(r^{-1/2}-2(r+1)^{-1/2}+(r+2)^{-1/2}\bigr).
\end{equation*}
Let $S^1$ and $S^2$ be independent copies of $S$.  There is a constant
$\mathfrak c_5>0$ such that, for every fixed $R<\infty$, uniformly in
$1\leq r\leq R$,
\begin{equation}
 \label{eq:one-edge}
 \Pp\bigl(S^1[0,m]\cap S^2[rm,(r+1)m]\neq\varnothing\bigr)
 =\mathfrak c_5h_5(r)m^{-1/2}\{1+o_m(1)\},
\end{equation}
and, for all $m,r\geq1$,
\begin{equation}
 \label{eq:one-edge-upper}
 \Pp\bigl(S^1[0,m]\cap S^2[rm,(r+1)m]\neq\varnothing\bigr)
 \leq C m^{-1/2}r^{-5/2}.
\end{equation}
The proof of \eqref{eq:one-edge-upper} is easy.
The proof of \eqref{eq:one-edge} uses the local central limit theorem, \cite[Theorem~1.2.1]{Lawler1991}, and the first-intersection
decomposition used in the proof of \cite[Proposition~2.3]{ShiraishiWatanabe2026}.  Together with the five-dimensional
freezing and conditional cut-time estimates, obtained by the same
exposure argument as \cite[Proposition~4.1]{Lawler1992},
\eqref{eq:one-edge}--\eqref{eq:one-edge-upper} yield the finite-cutoff
coupling with the independent long-range percolation edge field.

\medskip
\noindent
We next collect a consequence of 
\cite[Lemma~2.1 and Remark~2.2]{AdhikariOkada2026}.  
For $a<b<c$, set
\begin{equation*}
 %\label{eq:def-cross-term}
 \Delta_X(a,b,c):=X[a,b]+X[b,c]-X[a,c].
\end{equation*}
This variable is non-negative.  If
$L=(b-a)\vee(c-b)$, then
\begin{equation}
 \label{eq:cross-tail}
 \Pp\bigl(\Delta_X(a,b,c)\geq u\bigr)\leq Cu^{-1/2},
 \qquad 1\leq u\leq 2L.
\end{equation}
For $X=D$, this is Lemma~2.1, equation~(2.1), together with
 \cite[Remark~2.2]{AdhikariOkada2026}.  For $X=R$, the same reduction
applies.  
It follows from \eqref{eq:cross-tail} that, for every
$1<p<3/2$,
\begin{equation}
 \label{eq:cross-moments}
 \E[\Delta_X(a,b,c)]\leq CL^{1/2},
 \qquad
 \E[\Delta_X(a,b,c)^p]\leq C_pL^{p-1/2}.
\end{equation}
We shall use these estimates twice.

\begin{lemma}
 \label{lem:variance}
For $X\in\{D,R\}$,
\begin{equation}
 \label{eq:variance}
 \Var(X_m)\leq Cm^{3/2}.
\end{equation}
\end{lemma}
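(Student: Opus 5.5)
We mimic the dyadic argument of Lemma~\ref{lem:variance-upper-four}, with the one-scale input supplied by \eqref{eq:cross-moments} instead of Lemma~\ref{lem:one-scale-second-moment}. Write $\mathcal E_m:=\Delta_X(0,m/2,m)=X[0,m/2]+X[m/2,m]-X_m$. Iterating this splitting $K$ times gives
\[
X_m=\sum_{q=1}^{2^K}X[(q-1)2^{-K}m,q2^{-K}m]-\sum_{k=0}^{K-1}\sum_{l=1}^{2^k}\mathcal E_{I_{k,l}},
\]
where $\mathcal E_{I_{k,l}}=\Delta_X$ evaluated on the dyadic interval $I_{k,l}$ of length $r_k=2^{-k}m$. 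For each fixed $k$, the variables $\mathcal E_{I_{k,l}}$, $1\le l\le 2^k$, are functions of increments on disjoint time intervals, so they are independent and identically distributed. Minkowski's inequality in $L^2$ then gives
\[
\|X_m-\E X_m\|_2\le \sqrt{2^K}\,2^{-K}m+\sum_{k=0}^{K-1}\bigl(2^k\Var(\mathcal E_{r_k})\bigr)^{1/2}.
\]
Here the terminal blocks are bounded by their length. We take $K$ with $2^{-K}m\asymp 1$, so the first term is $O(m^{1/2})$; alternatively, stop at $2^{-K}m\approx m^{1/2}$ and bound the terminal sum trivially.

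The key one-scale input is a bound on $\Var(\mathcal E_r)\le \E[\mathcal E_r^2]$. From \eqref{eq:cross-tail}, $\Pp(\mathcal E_r\ge u)\le Cu^{-1/2}$ for $1\le u\le 2r$, and $\mathcal E_r\le 2r$ deterministically. Integrating, $\E[\mathcal E_r^2]=\int_0^{2r}2u\,\Pp(\mathcal E_r\ge u)\,du\le C\int_1^{2r}u^{1/2}\,du\le Cr^{3/2}$.

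Plugging this in, the $k$th level contributes at most $C(2^k r_k^{3/2})^{1/2}=C\,2^{k/2}(2^{-k}m)^{3/4}=Cm^{3/4}2^{-k/4}$. This is summable geometrically in $k$, so the total is at most $Cm^{3/4}$. Together with the terminal term $O(m^{1/2})$, we get $\|X_m-\E X_m\|_2\le Cm^{3/4}$, which is \eqref{eq:variance}.

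The main point to check is that the tail bound \eqref{eq:cross-tail} is available uniformly at every dyadic scale $r_k$ with the same constant. This is what \cite[Lemma~2.1, Remark~2.2]{AdhikariOkada2026} provide, since $C$ there does not depend on $L$. We must also confirm that the range $u\le 2L$ covers the whole support of $\mathcal E_r$. Non-negativity together with $\mathcal E_r\le X[0,r/2]+X[r/2,r]\le r$ ensures this. For $X=R$, the same bounds hold by Rayleigh monotonicity, as noted after \eqref{eq:cross-tail}.

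It is worth noting that the first-moment bound in \eqref{eq:cross-moments} alone would not suffice. The second moment is genuinely dominated by the heavy tail, and is exactly of order $r^{3/2}$. This is also the reason the fluctuations are not governed by the variance.
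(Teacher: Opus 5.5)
Your argument is correct and follows essentially the paper's route: the paper defers to the dyadic estimate of \cite[Lemma~5.1]{AdhikariOkada2026}, i.e.\ the scheme of Lemma~\ref{lem:variance-upper-four} with the one-scale input $\E[\Delta_X^2]\le CL^{3/2}$ obtained by integrating the tail bound \eqref{eq:cross-tail}, which gives level-$k$ contributions $Cm^{3/4}2^{-k/4}$. The only slip is cosmetic: for a dyadic interval of length $r$ one has $L=r/2$, so \eqref{eq:cross-tail} covers $1\le u\le r$ rather than $2r$, and this suffices because $\mathcal E_r\le r$, as you yourself note at the end.
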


\begin{proof}
This follows from the same dyadic estimate used in the
upper-bound part of \cite[Lemma~5.1]{AdhikariOkada2026}.
\end{proof}

\medskip
\noindent
Fix $\varepsilon\in(0,1)$.  Put
\begin{equation*}
 %\label{eq:macro-blocks}
 N=N_{n,\varepsilon}:=\lfloor\varepsilon^{-1}n^{1/3}\rfloor,
 \qquad
 t_i:=\left\lfloor\frac{in}{N}\right\rfloor,
 \qquad 0\leq i\leq N,
\end{equation*}
and write $a_n=n/N$, so that $a_n\sim\varepsilon n^{2/3}$.  Set
\begin{equation*}
 %\label{eq:block-definitions}
\hat{X}_i:=X[t_{i-1},t_i],
 \qquad
 S_{n}^{\varepsilon}:=\sum_{i=1}^{N}\hat{X}_i-X_n,
\end{equation*}
and
\begin{equation*}
 %\label{eq:block-fluctuation}
 Z_n^{\varepsilon}:=
 \frac{1}{n^{2/3}}
 \sum_{i=1}^{N}
 \bigl(\hat{X}_i-\E[\hat{X}_i]\bigr).
\end{equation*}
Then
\begin{equation}
 \label{eq:centered-decomposition}
 \frac{X_n-\E[X_n]}{n^{2/3}}
 =Z_n^{\varepsilon}
 -\frac{S_{n}^{\varepsilon}-\E[S_{n}^{\varepsilon}]}{n^{2/3}}.
\end{equation}
By Lemma~\ref{lem:variance} and independence of the block costs,
\begin{equation}
 \label{eq:block-small}
 \E[(Z_n^{\varepsilon})^2]
 \leq \frac{CN a_n^{3/2}}{n^{4/3}}
 \leq C\varepsilon^{1/2}.
\end{equation}

The next lemma replaces the second-order estimate on $\E[X_n]$.

\begin{lemma}
 \label{lem:UI}
For every fixed $\varepsilon\in(0,1)$ and every $1<p<3/2$,
\begin{equation*}
 %\label{eq:UI-bound}
 \sup_{n\geq1}
 \E\left[\left(\frac{S_{n}^{\varepsilon}}{n^{2/3}}\right)^p\right]
 <\infty.
\end{equation*}
In particular, the family
$\{S_{n}^{\varepsilon}/n^{2/3}:n\geq1\}$ is uniformly integrable.
\end{lemma}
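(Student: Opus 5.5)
We bound the $p$-th moment of $S_n^\varepsilon/n^{2/3}$ by writing the multi-block loss as a sum of dyadic cross terms $\Delta_X(a,b,c)$ and using \eqref{eq:cross-moments} on each. Note $S_n^\varepsilon\ge0$ by the triangle inequality (Rayleigh monotonicity for $X=R$), so only an upper bound is required. For notational simplicity, assume first that $N=2^K$; for general $N$, use a binary tree whose leaves are the blocks $I_i$ (or pad with at most $N$ extra blocks), which changes only constants.

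Iterating the splitting identity gives
\begin{equation*}
S_n^\varepsilon=\sum_{k=0}^{K-1}\sum_{l=1}^{2^k}\Delta_X\bigl(a_{k,l},b_{k,l},c_{k,l}\bigr),
\end{equation*}
where at level $k$ the interval $[a_{k,l},c_{k,l}]$ has length $2^{-k}n$ and midpoint $b_{k,l}$. This is exactly the telescoping used in the proof of Lemma~\ref{lem:variance-upper-four}, stopped at the block scale $a_n=n/N$. For fixed $k$ the summands are independent, since they are functions of disjoint increments, and they are nonnegative.

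By Minkowski's inequality in $L^p$,
\begin{equation*}
\|S_n^\varepsilon\|_p\le\sum_{k=0}^{K-1}\Bigl\|\sum_{l}\Delta_{k,l}\Bigr\|_p .
\end{equation*}
For each level, split $\sum_l\Delta_{k,l}$ into its mean and its centered part. The mean is at most $2^k\cdot C(2^{-k}n)^{1/2}=C2^{k/2}n^{1/2}$ by \eqref{eq:cross-moments}.

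For the centered part we use a von Bahr--Esseen type inequality for independent centered variables with $1<p\le2$, namely $\|\sum_l Y_l\|_p^p\le 2\sum_l\E|Y_l|^p$. Combined with \eqref{eq:cross-moments}, this gives
\begin{equation*}
\Bigl\|\sum_l(\Delta_{k,l}-\E\Delta_{k,l})\Bigr\|_p\le C\bigl(2^k(2^{-k}n)^{p-1/2}\bigr)^{1/p}=Cn^{1-1/(2p)}2^{k/(2p)} .
\end{equation*}

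Summing over $k\le K$, where $2^K\asymp\varepsilon^{-1}n^{1/3}$, the mean terms give at most $C2^{K/2}n^{1/2}\asymp C\varepsilon^{-1/2}n^{2/3}$. The centered terms give at most
\begin{equation*}
Cn^{1-1/(2p)}\,2^{K/(2p)}\asymp C_\varepsilon\, n^{1-1/(2p)+1/(6p)}=C_\varepsilon\, n^{1-1/(3p)} .
\end{equation*}
Since $p<3/2$, we have $1-1/(3p)<1-2/9=7/9$, but this is not below $2/3$, so the centered bound does not close. The crude moment bound is too weak for the small-$k$ levels, where the cross terms are heavy-tailed. This is the main obstacle.

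To fix it, note that the $p$-th moment in \eqref{eq:cross-moments} is dominated by the tail of $\Delta$ near $u\sim L$, and that tail is $Cu^{-1/2}$ only up to $2L$. Because $\Delta_{k,l}\le 2^{-k}n$, the correct comparison is with the scale $n^{2/3}$. Compute
\begin{equation*}
\E\bigl[(\Delta_{k,l}/n^{2/3})^p\bigr]\le Cn^{-2p/3}(2^{-k}n)^{p-1/2}.
\end{equation*}
At $k=0$ this is $n^{p/3-1/2}$, which is bounded if $p\le3/2$.

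So I would instead bound $\E[(S_n^\varepsilon)^p]$ directly. First use $(\sum_k x_k)^p\le C\sum_k 2^{\gamma k}x_k^p$ for a small $\gamma>0$, via Hölder with geometric weights. Then, within level $k$, apply a Rosenthal-type inequality for nonnegative independent summands:
\begin{equation*}
\E\Bigl(\sum_l\Delta_{k,l}\Bigr)^p\le C\Bigl(\sum_l\E\Delta_{k,l}^p+\Bigl(\sum_l\E\Delta_{k,l}\Bigr)^p\Bigr).
\end{equation*}
This yields
\begin{equation*}
n^{-2p/3}\Bigl[2^k(2^{-k}n)^{p-1/2}+\bigl(2^{k/2}n^{1/2}\bigr)^p\Bigr].
\end{equation*}
For $k\le K$ both pieces are at most $C_\varepsilon 2^{(k-K)c}$ with $c>0$, using $p<3/2$ and $2^K\asymp n^{1/3}$. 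Hence the weighted sum converges uniformly in $n$, which proves the lemma. Uniform integrability then follows from the bounded $p$-th moment with $p>1$.
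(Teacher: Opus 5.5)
Your route is the paper's: telescope $S_n^\varepsilon$ dyadically into cross terms $\Delta_X$, combine the levels by a H\"older/Minkowski step, and feed in \eqref{eq:cross-moments}. The independence within each level that you invoke is genuinely needed, and the paper's one-line sketch leaves it implicit. Optimally weighted H\"older over the individual $\Delta_{k,l}$ reduces to Minkowski. With \eqref{eq:cross-moments} that gives only $\sum_{k,l}\|\Delta_{k,l}\|_p\le Cn^{1-1/(2p)}2^{K/(2p)}\asymp C_\varepsilon n^{1-1/(3p)}$, which exceeds $n^{2/3}$ for $p>1$. However, as written your proposal does not prove the lemma, for two concrete reasons. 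Both are easy to repair.

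First, the ``main obstacle'' comes from an arithmetic slip. In fact
\[
\bigl(2^k(2^{-k}n)^{p-1/2}\bigr)^{1/p}=2^{k(3/2-p)/p}\,n^{1-1/(2p)},
\]
not $2^{k/(2p)}n^{1-1/(2p)}$; the latter is the bound without the independence gain, i.e.\ the one just mentioned. Since $p<3/2$, the sum over $k<K$ is geometric and dominated by $k\approx K$. So the centered parts contribute at most $C\,2^{K(3/2-p)/p}n^{1-1/(2p)}\asymp C_\varepsilon n^{1/(2p)-1/3}n^{1-1/(2p)}=C_\varepsilon n^{2/3}$. Your first argument (Minkowski across levels, the mean bound plus von Bahr--Esseen within a level) therefore already closes. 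Your diagnosis that the coarse levels cause trouble is also off: the finest levels dominate.

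Second, the replacement argument fails at its last step. You show, correctly, that the normalized level-$k$ contribution is at most $C_\varepsilon 2^{-(K-k)c}$, i.e.\ largest near $k=K$. But your H\"older weights $2^{\gamma k}$ are also largest there, so
\[
\sum_{k<K}2^{\gamma k}\,2^{-(K-k)c}\asymp 2^{\gamma K}\asymp n^{\gamma/3}\longrightarrow\infty,
\]
which is not bounded uniformly in $n$. The weights must instead be $w_k=2^{\gamma(K-k)}$ with $0<\gamma<c$, so that $\sum_k w_k^{-1/(p-1)}$ stays bounded uniformly in $K$. More simply, use Minkowski: $\|S_n^\varepsilon\|_p\le\sum_{k<K}C_\varepsilon n^{2/3}2^{-(K-k)c/p}\le C_\varepsilon n^{2/3}$.

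With either repair the proof is complete. Your Rosenthal step and your two exponent computations are correct. Padding to $2^K$ blocks is legitimate because $\sum_{i\le N'}\hat{X}_i-X[0,t_{N'}]$ is nondecreasing in $N'$, by the triangle inequality and monotonicity under adding edges.
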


\begin{proof}
One starts with a dyadic decomposition of $X$ in order to write $S_n^{\epsilon}$ as a sum of variables of the type $\Delta_X$. Then, one can apply H\"older's inequality to bound the $p$th moment of $S_n^{\epsilon}$ by an appropriate weighted sum of the $p$-th moments of the $\Delta_X$ variables from the  dyadic decomposition. Equation \eqref{eq:cross-moments} derives these appropriate moment bounds for the $\Delta_X$ variables. 
\end{proof}

\medskip
\noindent
Fix
$\delta\in(0,1)$ with $\delta^{-1}\in\mathbb N$, and divide each macro
block into $\delta^{-1}$ consecutive micro blocks of length
\begin{equation*}
 %\label{eq:micro-scale}
 h=h_{n,\varepsilon,\delta}:=\delta a_n
 \sim\delta\varepsilon n^{2/3}.
\end{equation*}
Let $\mathcal P_{\varepsilon,\delta}$ be the set of pairs of non-adjacent
micro blocks whose time span crosses at least one macro endpoint.  Write
$\mathcal E_{\RW}$ for the corresponding random-walk intersection edge
set and
\begin{equation*}
 %\label{eq:Lrw}
 L_{\RW}:=\sum_{e\in\mathcal E_{\RW}}\ell(e),
\end{equation*}
where $\ell(e)$ is the length of $e$ in micro-block units.  Let
$\Gamma_{\LRP}$ be the independent edge field on
$\mathcal P_{\varepsilon,\delta}$ in which an edge of length $r\geq2$
is present with probability
\begin{equation*}
 %\label{eq:lrp-prob}
 p_r^{(5)}:=\mathfrak c_5h_5(r-1)h^{-1/2},
\end{equation*}
and set
\begin{equation*}
 %\label{eq:Llrp}
 L_{\LRP}:=\sum_{e\in\Gamma_{\LRP}}\ell(e).
\end{equation*}

\begin{proposition}
 \label{prop:shortcut-limit}
For every fixed $\varepsilon\in(0,1)$,
\begin{equation}
 \label{eq:shortcut-limit}
 \frac{S_{n}^{\varepsilon}}{n^{2/3}}
 \ \Longrightarrow\ V_\varepsilon,
\end{equation}
where $V_\varepsilon$ is the non-negative infinitely divisible random
variable determined by
\begin{equation}
 \label{eq:Veps-char}
 \begin{split}
 \log\E[e^{i\theta V_\varepsilon}]
 ={}\mathfrak c_5\int_\varepsilon^\infty
 (e^{i\theta\xi_Xx}-1)x^{-5/2}\,dx
 +\mathfrak c_5\varepsilon^{-3/2}\int_0^1
 (e^{i\theta\xi_X\varepsilon x}-1)x^{-3/2}\,dx.
 \end{split}
\end{equation}
\end{proposition}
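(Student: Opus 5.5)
The plan is to repeat, in five dimensions, the three-stage reduction used in Section~\ref{sec:four-dimensional-lrp}. First reduce the microscopic loss to a mesoscopic edge functional. Then replace the random-walk edge field by $\Gamma_{\LRP}$. Finally compute the limiting law of the percolation functional.

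\emph{Step 1 (shortcut approximation).} I will show that for fixed $\varepsilon$,
\[
\lim_{\delta\downarrow0}\limsup_{n\to\infty}\Pp\Bigl(\bigl|S_n^\varepsilon-\xi_X h L_{\RW}\bigr|\geq \eta n^{2/3}\Bigr)=0
\qquad\text{for every }\eta>0 .
\]
This is the analogue of Lemma~\ref{lem:shortcut-approximation}, with $\E[\Delta_n]$ replaced by $\xi_X h$ via \eqref{eq:speed}.

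The structural inputs carry over with $(\log n)^{-1}$ replaced by $h^{-1/2}\asymp n^{-1/3}$:
\begin{enumerate}[(i)]
\item Non-overlap of long edges and absence of intersections near block endpoints. These follow from \eqref{eq:one-edge-upper} together with a five-dimensional freezing bound, which gives an extra factor $Ch^{-1/2}$ per additional intersection. Since $\#\mathcal P_{\varepsilon,\delta}\,h^{-1/2}=O(1)$, the overlap sum is $O(n^{-1/3+o(1)})$.
\item Existence of global cut times near edge endpoints and near uncrossed macro endpoints. In $d=5$ cut times have positive density, so the failure probabilities are polynomially small rather than $(\log\log n)^C/\log n$.
\item The local-cost replacement of Lemma~\ref{lem:Djk-concentration}. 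By Lemma~\ref{lem:variance}, $\operatorname{Var}(D(j,k))\le C r h^{3/2}=o(n^{4/3})$, so Chebyshev's inequality gives
\[
D(j,k)=(r-1)\xi_X h+o_{\Pp}(n^{2/3}).
\]
\item The boundary errors. These are $\#\mathcal E_{\RW}\cdot O(h)$, and since $\E\#\mathcal E_{\RW}=O_\varepsilon(1)$ this is $O_{\Pp}(\delta\varepsilon n^{2/3})$, negligible as $\delta\downarrow0$.
\item The $\Lambda_2$-endpoint error $W$. Bounding it as in Lemma~\ref{lem:W-one-point} with \eqref{eq:one-edge-upper} gives a term vanishing as $\delta\to0$.
\end{enumerate}
In place of the log-scale estimates I would use \eqref{eq:cross-tail} to control $X[T_i^-,t_i]$-type pieces.

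\emph{Step 2 (coupling).} Apply the finite-cutoff coupling stated after \eqref{eq:one-edge-upper}: truncate at length $R$ and use the Arratia--Goldstein--Gordon bound exactly as in Lemma~\ref{lem:fixed-cutoff-product-approx} and Proposition~\ref{prop:weak-coupling-four}. The marginals match $p_r^{(5)}$ by \eqref{eq:one-edge}, applied with $m=h$ and gap $r-1$ blocks. The tail is
\[
\sum_{r>R}e_r(n)\,p_r^{(5)}\le C_{\varepsilon,\delta}\sum_{r>R}N r^{-5/2}h^{-1/2}\le C_{\varepsilon,\delta}R^{-3/2}.
\]
Consequently $hL_{\RW}/n^{2/3}$ and $hL_{\LRP}/n^{2/3}$ have the same limiting law, as in Lemma~\ref{lem:samelimdist} and Lemma~\ref{lem:shortcut-to-LRP}.

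\emph{Step 3 (characteristic function).} As in Lemma~\ref{lem:poisson-limit-fixed-delta}, $\#\Gamma_{\LRP}(r)$ is asymptotically Poisson with parameter $e_r(n)\,p_r^{(5)}$, where
\[
e_r(n)=
\begin{cases}
rN+O(r), & r\le\delta^{-1},\\
\delta^{-1}N-r+O(1), & r>\delta^{-1}.
\end{cases}
\]
Note that $N h^{-1/2}\sim\varepsilon^{-3/2}\delta^{-1/2}$ and that $h_5(r-1)\sim r^{-5/2}$ for large $r$. Each edge of length $r$ contributes the jump
\[
\xi_X\,\frac{hr}{n^{2/3}}=\xi_X\varepsilon\delta r .
\]
Set $x=\varepsilon\delta r$.
\begin{itemize}
\item For $r>\delta^{-1}$, the intensity $\delta^{-1}Nh^{-1/2}\mathfrak c_5 r^{-5/2}$ is a Riemann sum converging to $\mathfrak c_5x^{-5/2}\,dx$ on $(\varepsilon,\infty)$.
\item For $r\le\delta^{-1}$, write $y=\delta r$. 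The intensity $rNh^{-1/2}\mathfrak c_5 r^{-5/2}$ converges to $\mathfrak c_5\varepsilon^{-3/2}y^{-3/2}\,dy$ on $(0,1)$, with jump $\xi_X\varepsilon y$.
\end{itemize}
Here $h_5(r-1)=r^{-5/2}(1+O(1/r))$, and the small-$r$ terms vanish as $\delta\to0$ because $|e^{i\theta x}-1|\le|\theta x|$. No compensator is needed, since $\int_0 y\cdot y^{-3/2}\,dy<\infty$. As $\delta\downarrow0$ this gives exactly \eqref{eq:Veps-char}.

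Finally I would combine the $\delta$-limits with uniform continuity of test functions, as in Lemma~\ref{lem:shortcut-to-LRP}, to obtain \eqref{eq:shortcut-limit}.

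The main obstacle is Step 1, specifically the five-dimensional freezing and cut-time estimates that force at most one extra intersection at cost $O(h^{-1/2})$ conditional on a prescribed edge. These are only sketched via \cite[Proposition~4.1]{Lawler1992}. I would first try to prove a quenched bound of the form
\[
\sup_{x:\,\operatorname{dist}(x,\Gamma)\ge h^{1/2-\kappa}}\Pp^x\bigl(\text{hit }\Gamma\bigr)\le Ch^{-1/2+\kappa'}
\]
for typical frozen paths $\Gamma$, using second-moment Green's function estimates $\sum G(x,y)$ over $y\in\Gamma$.
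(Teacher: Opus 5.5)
Your proposal follows the paper's proof essentially step for step: you reduce $S_n^\varepsilon$ to $\xi_X\varepsilon\delta L_{\RW}$ via local-cost concentration from Lemma~\ref{lem:variance} and \eqref{eq:speed}, replace $\mathcal E_{\RW}$ by $\Gamma_{\LRP}$ through the finite-cutoff coupling, and pass to the limit using the same Poisson parameters $\lambda_r^{(\varepsilon,\delta)}$ and Riemann sums. Two small bookkeeping slips do not affect the conclusion: in (iv), $\E\#\mathcal E_{\RW}\asymp\varepsilon^{-3/2}\delta^{-1/2}$ is not bounded uniformly in $\delta$ (lengths $r\le\delta^{-1}$ contribute about $\mathfrak c_5\varepsilon^{-3/2}\delta^{-1/2}\sum_r r\,h_5(r-1)$), so the boundary error is $O_{\Pp}(\varepsilon^{-1/2}\delta^{1/2}n^{2/3})$ rather than $O_{\Pp}(\delta\varepsilon n^{2/3})$, which still vanishes as $\delta\downarrow0$; and in (i), $\#\mathcal P_{\varepsilon,\delta}\,h^{-1/2}$ is of order $n^{1/3}$, so the $O(1)$ count must be restricted to pairs of length at most $R$, as your Step~2 truncation already does.
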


\begin{proof}[Proof sketch]
We indicate the estimates which are needed.  First truncate the edge
length at a fixed $R$.  For $2\leq r\leq R$, Lemma~\ref{lem:variance} and
\eqref{eq:speed} give
\begin{equation}
 \label{eq:local-cost-prob}
 \frac{X[0,rh]}{rh}\longrightarrow\xi_X
 \qquad\text{in }L^2,
\end{equation}
uniformly in $r\leq R$.  \eqref{eq:local-cost-prob} yields
\begin{equation}
 \label{eq:shortcut-prob}
 \lim_{\delta\downarrow0}\limsup_{n\to\infty}
 \Pp\left(
 \left|\frac{S_{n}^{\varepsilon}}{n^{2/3}}
 -\xi_X\varepsilon\delta L_{\LRP}\right|>\eta
 \right)=0,
 \qquad \eta>0.
\end{equation}
For
$R>\delta^{-1}$, the same estimate gives the weighted tail bound
\[
 \varepsilon\delta
 \sum_{\substack{e\in\mathcal P_{\varepsilon,\delta}:\\
                  \ell(e)>R}}
 \ell(e)\Pp(e\in\mathcal E_{\RW})
 \leq C_{\varepsilon,\delta}R^{-1/2},
\]
and the identical estimate holds for $\Gamma_{\LRP}$.  
For fixed $\varepsilon$ and $\delta$, the numbers of long range percolation edges of the
various lengths converge jointly to independent Poisson variables
$(N_r^{(\varepsilon,\delta)})_{r\geq2}$ with parameters
\begin{equation*}
 %\label{eq:poisson-parameters}
 \lambda_r^{(\varepsilon,\delta)}=
 \begin{cases}
 \mathfrak c_5\varepsilon^{-3/2}\delta^{-1/2}
 r h_5(r-1),&2\leq r\leq\delta^{-1},\\[0.4em]
 \mathfrak c_5\varepsilon^{-3/2}\delta^{-3/2}
 h_5(r-1),&r>\delta^{-1}.
 \end{cases}
\end{equation*}
Consequently,
\[
 \xi_X\varepsilon\delta L_{\LRP}
 \ \Longrightarrow\
 \xi_X\varepsilon\delta
 \sum_{r\geq2}rN_r^{(\varepsilon,\delta)}.
\]
Letting $\delta\downarrow0$ and using $h_5(r)\sim r^{-5/2}$ gives
\eqref{eq:Veps-char} by a Riemann-sum calculation.  Together with
\eqref{eq:shortcut-prob}, this proves \eqref{eq:shortcut-limit}.
\end{proof}

By Lemma~\ref{lem:UI} and Proposition~\ref{prop:shortcut-limit},
convergence also holds at the level of first moments:
\begin{equation*}
 %\label{eq:mean-convergence}
 \frac{\E[S_{n}^{\varepsilon}]}{n^{2/3}}
 \longrightarrow \E[V_\varepsilon].
\end{equation*}
In particular,
\begin{equation}
 \label{eq:centered-shortcut-limit}
 \frac{S_{n}^{\varepsilon}-\E[S_{n}^{\varepsilon}]}{n^{2/3}}
 \ \Longrightarrow\
 V_\varepsilon-\E[V_\varepsilon].
\end{equation}
%This is the only use of uniform integrability. 
The random variable $V_\varepsilon$ in \eqref{eq:Veps-char} have a finite first moment, and
\begin{equation*}
 %\label{eq:Veps-mean}
 \E[V_\varepsilon]=4\mathfrak c_5\xi_X\varepsilon^{-1/2}.
\end{equation*}
Put
\begin{equation*}
 %\label{eq:Weps}
 W_\varepsilon:=-V_\varepsilon+\E[V_\varepsilon].
\end{equation*}
Centering directly in \eqref{eq:Veps-char} gives
\begin{equation}
 \label{eq:Weps-char}
 \begin{split}
 \log\E[e^{i\theta W_\varepsilon}]
 ={}&\mathfrak c_5\int_\varepsilon^\infty
 \bigl(e^{-i\theta\xi_Xx}-1+i\theta\xi_Xx\bigr)x^{-5/2}\,dx\\
 &+\mathfrak c_5\varepsilon^{-3/2}\int_0^1
 \bigl(e^{-i\theta\xi_X\varepsilon x}-1
       +i\theta\xi_X\varepsilon x\bigr)x^{-3/2}\,dx.
 \end{split}
\end{equation}
The second integral in \eqref{eq:Weps-char} is $O(\varepsilon^{1/2})$.
The first one converges by dominated convergence.  Hence
\begin{equation}
 \label{eq:stable-exponent-conv}
 W_\varepsilon\ \Longrightarrow\ Z_{5,X},
 \qquad \varepsilon\downarrow0,
\end{equation}
where
\begin{equation}
 \label{eq:stable-exponent}
 \log\E[e^{i\theta Z_{5,X}}]
 =\mathfrak c_5\int_0^\infty
 \bigl(e^{-i\theta\xi_Xx}-1+i\theta\xi_Xx\bigr)x^{-5/2}\,dx.
\end{equation}
The integral in \eqref{eq:stable-exponent} is absolutely convergent.  The
corresponding L\'evy measure is the image, under $x\mapsto-\xi_Xx$, of
\[
 \mathfrak c_5x^{-5/2}\1_{\{x>0\}}\,dx.
\]
Thus, $Z_{5,X}$ is a stable non-degenerate random variable of index $3/2$ and skewness $-1$. 

\begin{proof}[Proof of Theorem~\ref{thm:intro-stable}]
Let $F$ be bounded and Lipschitz.  By
\eqref{eq:centered-decomposition}, \eqref{eq:block-small}, and
\eqref{eq:centered-shortcut-limit},
\[
 \lim_{\varepsilon\downarrow0}\limsup_{n\to\infty}
 \left|
 \E\left[F\left(\frac{X_n-\E[X_n]}{n^{2/3}}\right)\right]
 -\E[F(W_\varepsilon)]
 \right|=0.
\]
Together with \eqref{eq:stable-exponent-conv}, this proves the assertion.
\end{proof}

\end{document}